\title{Global relaxation of bistable solutions for gradient systems in one unbounded spatial dimension}
\author{Emmanuel \textsc{Risler}}
\begin{document}
\maketitle
\begin{abstract}
This paper is concerned with parabolic gradient systems of the form 
\[
u_t=-\nabla V (u) + \mathcal{D}  u_{xx}\,,
\]
where the spatial domain is the whole real line, the state variable $u$ is multidimensional, $\mathcal{D}$ denotes a fixed diffusion matrix, and the potential $V$ is coercive at infinity. \emph{Bistable} solutions, that is solutions close at both ends of space to stable homogeneous equilibria, are considered. For a solution of this kind, it is proved that, if the equilibria approached at both ends belong to the same level set of the potential and if an appropriate (localized in space) energy remains bounded from below as time increases, then the solution approaches, as time goes to infinity, a pattern of profiles of stationary solutions homoclinic or heteroclinic to stable homogeneous equilibria, going slowly away from one another. This result provides a step towards a complete description of the global behaviour of all bistable solutions that is pursued in a companion paper. Some consequences are derived, and applications to some examples are given.
\end{abstract}
\nnfootnote{%
\emph{2020 Mathematics Subject Classification:} 35B38, 35B40, 35K57, 37J46.\\%
\emph{Key words and phrases:} parabolic gradient system, bistable solution, invasion speed, standing terrace of bistable stationary solutions, global behaviour.
}
\thispagestyle{empty} % This one is required to remove numbering from the first page
\pagestyle{empty}
\hypersetup{pageanchor=false} % To avoid warnings "destination with same identifier"
\newpage
\tableofcontents
\newpage
\hypersetup{pageanchor=true} % To avoid warnings "destination with same identifier"
\pagestyle{plain}
\setcounter{page}{1}
\section{Introduction}
\label{sec:intro}
This paper deals with the global dynamics of nonlinear parabolic systems of the form
\begin{equation}
\label{init_syst}
u_t=-\nabla V (u) + \ddd  u_{xx} \,,
\end{equation}
where the time variable $t$ and the space variable $x$ are real, the spatial domain is the whole real line, the function $(x,t)\mapsto u(x,t)$ takes its values in $\rr^d$ with $d$ a positive integer, $\ddd$ is a fixed $d\times d$ positive definite symmetric real matrix, and the nonlinearity is the gradient of a scalar \emph{potential} function $V:\rr^d\to\rr$, which is assumed to be regular (of class $\ccc^2$) and coercive at infinity (see hypothesis \cref{hyp_coerc} in \vref{subsubsec:coerc_glob_exist}).

Let us mention at this stage that the choice of introducing the diffusion matrix $\ddd$ in system \cref{init_syst} is just for sake of generality, but that its presence will never change the core of the arguments that will be carried on along the paper. Thus the reader may very well choose to assume that $\ddd$ is actually equal to the identity matrix without missing any significant point. 

The main feature of system~\cref{init_syst} is that it can be recast, at least formally, as the gradient flow of an energy functional. If $(w,w')$ is a pair of vectors of $\rr^d$, let $w\cdot w'$ and $\abs{w} =\sqrt{w\cdot w}$ denote the usual Euclidean scalar product and the usual Euclidean norm, and let
\[
\langle w,w' \rangle_{\ddd} =w\cdot \ddd w'
\quad\text{and}\quad
\abs{w}_{\ddd} =\sqrt{\langle w,w\rangle_{\ddd}}
\]
denote the scalar product associated to $\ddd$ and the corresponding norm, respectively. For every function $v:x\mapsto v(x)$ defined on $\rr$ with values in $\rr^d$, its \emph{energy} (or \emph{Lagrangian} or \emph{action}) with respect to system \cref{init_syst} is defined (at least formally) by
\begin{equation}
\label{form_en}
\eee[v] = \int_\rr \Bigl(\frac{1}{2}\abs{v_x(x)}_{\ddd}^2+V\bigl(v(x)\bigr)\Bigr)\, dx
\,.
\end{equation}
Formally, the differential of this functional reads (skipping border terms in the integration by parts)
\[
\begin{aligned}
d\eee[v]\cdot \delta v &= \int_\rr \bigl( v_x \cdot \ddd (\delta v)_x + \nabla V(v)\cdot \delta v \bigr) \, dx \\
&= \int_\rr \bigl( - \ddd v_{xx} + \nabla V(v) \bigr) \cdot \delta v \, dx
\,.
\end{aligned}
\]
In other words, the (formal) gradient of this functional with respect to the $L^2(\rr,\rr^d)$-scalar product reads 
\[
\nabla\eee[v] = \nabla V(v) - \ddd v_{xx}
\,,
\]
and system \cref{init_syst} can formally be rewritten as
\[
u_t = - \nabla \eee[u(\cdot,t)]
\,.
\]
Accordingly, if $(x,t)\mapsto u(x,t)$ is a solution of this system, then (formally)
\begin{equation}
\label{formal_gradient_structure}
\begin{aligned}
\frac{d}{d t}\eee[u(\cdot,t)] &= d\eee[u(\cdot,t)]\cdot u_t(\cdot,t) \\
&= \bigl\langle \nabla\eee[u(\cdot,t)], u_t(\cdot,t) \bigr\rangle_{L^2(\rr,\rr^d)} \\
&= - \int_\rr \abs{u_t(x,t)}^2 \, dx \le 0
\,.
\end{aligned}
\end{equation}

If system~\cref{init_syst} is considered on a \emph{bounded} spatial domain with boundary conditions that preserve this gradient structure, then the integrals above (on this spatial domain) converge, thus the system really --- and not only formally --- is of gradient type. In this case the dynamics is (at least from a  qualitative point of view) fairly well understood, up to a fine description of the global attractor that is compact and made of the unstable manifolds of stationary solutions \cite{Hale_asymptBehavDissipSyst_1988,Temam_infiniteDimDynSyst_1997}. According to LaSalle's principle, every solution approaches the set of stationary solutions (and even a single stationary solution if the potential is analytic \cite{Simon_asymptoticsEvolEqu_1983}). 

If space is the whole real line and the solutions under consideration are only assumed to be bounded, then the gradient structure above is only formal and allows a much richer phenomenology (the full attractor is far from being fully understood in this case, see the introduction of \cite{GallaySlijepcevic_energyFlowFormallyGradient_2001} and references therein). A salient feature is the occurrence of travelling fronts, that is travelling waves connecting homogeneous equilibria at both ends of space, which are known to play a major role in the asymptotic behaviour of ``many'' solutions. A reasonably wide class of solutions of system \cref{init_syst}, large enough to capture the convergence towards travelling fronts while limiting the complexity of the dynamics, is made of solutions that are close to homogeneous equilibria at both ends of space, at least for large positive times. And among such solutions the simplest case is that of \emph{bistable} solutions, when both equilibria at the ends of space are stable. 

In the late seventies, substantial breakthroughs have been achieved by P. C. Fife and J. B. McLeod about the global behaviour of such \emph{bistable} solutions in the \emph{scalar} case ($d$ equals $1$). Their results comprise global convergence towards a bistable front \cite{FifeMcLeod_approachTravFront_1977}, global convergence towards a ``stacked family of bistable fronts'' \cite{FifeMcLeod_phasePlaneDisc_1981}, and finally, in the case of a bistable potential, a rather complete description of the global asymptotic behaviour of all solutions that are close enough, at infinity in space, to the local (non global) minimum point \cite{Fife_longTimeBistable_1979}.

This paper is part of a series \cite{GallayRisler_globStabBistableTW_2007,Risler_globCVTravFronts_2008,Risler_globalBehaviour_2016} aiming at making a step further in this program, by extending those results to the case of \emph{systems}, and by providing for such systems a complete description of the asymptotic behaviour of all bistable solutions (under generic hypotheses on the potential $V$). Concerning the nature of the arguments involved in the proofs, the main difference with respect to Fife and McLeod's approach is the fact that the maximum principle does not hold any more for systems. It turns out, though, that a purely variational approach is sufficient to recover the results obtained by these authors, thanks to the sole fact that a gradient structure similar to \cref{formal_gradient_structure} exists in every travelling referential (provided that the diffusion matrix $\ddd$ is the identity matrix, \cite{GallayRisler_globStabBistableTW_2007,Risler_globCVTravFronts_2008,Risler_globalBehaviour_2016}). A similar description was also achieved by the same approach for radially symmetric solutions of parabolic gradient systems in higher space dimension \cite{Risler_noInvasionCaseHigherSpace_2020,Risler_globalBehaviourRadiallySymmetric_2017} and for hyperbolic gradient systems in space dimension one \cite{Risler_globalBehaviourHyperbolicGradient_2017}, and might be extended to solutions invading critical points of the potential through pushed fronts at one end or both ends of space \cite{OliverBonafouxRisler_globCVPushedTravFronts_2023}.
%This gradient structure was already used by Fife and McLeod in their seminal paper \cite{FifeMcLeod_approachTravFront_1977} of 1977 (see p. 350), and more recently reconsidered by several authors, especially C. B. Muratov and collaborators \cite{Muratov_globVarStructPropagation_2004,MuratovNovaga_globExpConvTW_2012,MuratovZhong_thresholdSymSol_2013}. 

The purpose of this paper is to treat the ``relaxation'' part of this program for parabolic systems of the form \cref{init_syst}. To be more explicit, it is to describe the asymptotic behaviour of bistable solutions connecting (local) minimum points in the same level set of the potential and having a (properly localized) energy that remains bounded from below. To this end, only the gradient structure \cref{formal_gradient_structure} in the laboratory frame will be required. The connection of this relaxation part with the full picture is, roughly speaking, as follows: when $\ddd$ is the identity matrix, this lower bound on the localized energy is equivalent to the fact that the neighbourhoods of the two ends of space where the solution remains close to homogeneous equilibria are not ``invaded'' at a positive mean speed, \cite{Risler_globalBehaviour_2016,Risler_noInvasionCaseHigherSpace_2020}; and if by contrast invasion on one or the other side occurs, it must occur via travelling fronts, \cite{Risler_globalBehaviour_2016}.

The literature about relaxation of solutions for systems like~\cref{init_syst} is abundant, and a lot has been done to obtain precise quantitative information about the approach to stationary solutions and the metastable dynamics (``dormant instability'') resulting from the long range interaction between these (spatially localized) stationary solutions. For a more complete list of references together with short historical reviews see for instance \cite{Ei_motionPulses_2002,BethuelOrlandi_slowMotion_2011,BethuelSmets_motionLawFrontsScalarRDEquEqualDepthMultWellPot_2017}. Mostly, these results concern solutions of finite energy in a potential taking only nonnegative values. 

The goal pursued in this paper, in connection with the program mentioned above, is different: the conclusions are limited to the qualitative features of the solutions (they only concern their asymptotic dynamics after an arbitrarily long interval of time for which no quantitative estimate will be given), but the hypotheses are more general: besides the fact that systems and not only scalar equations are considered, the potential may take negative values (assuming that the value taken by the potential at the minimum points approached by the solution at the ends of space is zero), and the solutions under consideration may have an infinite energy. Specific difficulties to overcome are thus to control the behaviour of the solutions at both ends of space, to set up a relaxation scheme despite the fact that the energy may be infinite, and to prove convergence towards the set of stationary solutions that are homoclinic or heteroclinic to homogeneous equilibria without any a priori information about this set.
\section{Assumptions, notation, and statement of the results}
\label{sec:assumpt}
\subsection{Semi-flow}
\subsubsection{Local semi-flow in uniformly local Sobolev space}
\label{subsubsec:funct_space}
Let us denote by $X$ the uniformly local Sobolev space $\Honeul$ (its definition is recalled in \cref{subsec:funct_fram}). This space is the most convenient with respect to the functionals (localized energy and localized $L^2$-norm of the solutions) that are used along the paper. Due to the smoothing properties of system~\cref{init_syst}, the space $X$ might be replaced with the more familiar Banach space $\cccb{1}$ of functions of class $\ccc^1$ that are uniformly bounded together with their first derivative (this more familiar framework is the one chosen in \cite{GallayRisler_globStabBistableTW_2007}). However it is within the functional framework $X=\Honeul$ that the statements are the least sensitive to regularization properties, and thus most appropriate to further generalizations to a wider class of systems, for instance hyperbolic systems (see \cref{subsubsec:damp_hyp}).

System~\cref{init_syst} defines a local semi-flow in $X$ (see for instance D. B. Henry's book \cite{Henry_geomSemilinParab_1981}). 
\subsubsection{Coercivity of the potential and global semi-flow}
\label{subsubsec:coerc_glob_exist}
\emph{Everywhere in the paper}, it will be assumed that the potential function $V:\rr^d\to\rr$ is of class $\ccc^2$ and is strictly coercive at infinity in the following sense: 
\begin{gather}
\tag{$\text{H}_\text{coerc}$}
\lim_{R\to+\infty}\quad  \inf_{\abs{u}\ge R}\ \frac{u\cdot \nabla V(u)}{\abs{u}^2} >0
\label{hyp_coerc}
\end{gather}
(or in other words there exists a positive quantity $\varepsilon$ such that the quantity $u\cdot \nabla V(u)$ is greater than or equal to $\varepsilon\abs{u}^2$ as soon as $\abs{u}$ is large enough). 

According to this hypothesis \cref{hyp_coerc}, the semi-flow of system~\cref{init_syst} is actually global, in other words solutions are defined up to $+\infty$ in time (details are given in \cref{subsec:glob_exist}). Let us denote by $(S_t)_{t\ge0}$ this semi-flow. 

Everywhere in this paper, a \emph{solution of system \cref{init_syst}} refers to a function 
\[
\rr\times[0,+\infty)\to\rr^d\,, \quad (x,t)\mapsto u(x,t)
\,,
\]
such that the function $u_0:x\mapsto u(x,t=0)$ (initial condition) is in $X$ and, for every nonnegative time $t$, the function $u(\cdot,t)$ equals $(S_t u_0)(\cdot)$ (and is therefore also in $X$). 
\subsection{Minimum points, solutions stable at one end of space, and bistable solutions}
\subsubsection{Minimum points}
Everywhere in the paper, the term ``minimum point'' denotes a point where a function --- namely the potential $V$ --- reaches a local \emph{or} global minimum value. Let $\mmm$ denote the set of \emph{nondegenerate} minimum points of $V$:
\[
\mmm=\{m\in\rr^d: \nabla V(m)=0 
\quad\text{and}\quad 
D^2V(m)\text{ is positive definite}\}
\,.
\]
\subsubsection{Solutions stable at one end of space, bistable solutions}
\label{subsubsec:def_not}
\begin{definition}[solutions stable at one end of space, bistable solution]
\label{def:solution_stable_at_one_end_of_space}
Let $(x,t)\mapsto u(x,t)$ be a solution of system~\cref{init_syst}.
\begin{itemize}
\item This solution is said to be \emph{stable at the right end of space} if there exists a point $m_+$ in $\mmm$ such that the quantity
\[
\limsup_{x\to+\infty} \ \abs{u(x,t)-m_+}
\]
goes to $0$ as time goes to $+\infty$. 
More precisely, this solution is said to be \emph{stable close to $m_+$ at the right end of space}. 
\item Similarly, this solution is said to be \emph{stable at the left end of space} if there exists a point $m_-$ in $\mmm$ such that the quantity
\[
\limsup_{x\to-\infty} \ \abs{u(x,t)-m_-}
\]
goes to $0$ as time goes to $+\infty$. 
More precisely, this solution is said to be \emph{stable close to $m_-$ at the left end of space}.
\item Finally, this solution is called a \emph{bistable solution} if it is stable at the left \emph{and} right ends of space. More precisely, if this solution is stable close to $m_-$ at the left end of space and stable close to $m_+$ at the right end of space, then it is called a \emph{bistable solution connecting $m_-$ to $m_+$} (see \cref{fig:bist_sol}). 
\end{itemize}
\begin{figure}[!htbp]
\centering
\includegraphics[width=0.6\textwidth]{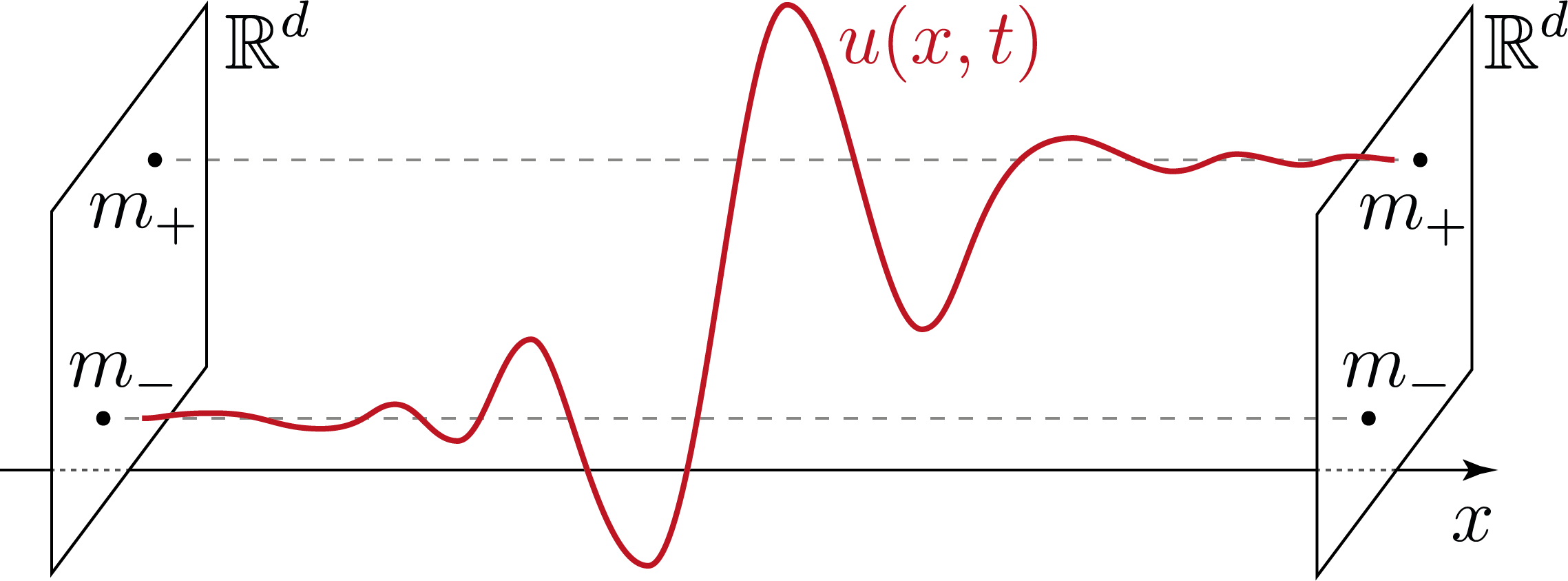}
\caption{A bistable solution connecting $m_-$ to $m_+$.}
\label{fig:bist_sol}
\end{figure}
The same definitions apply to a function (initial condition) $x\mapsto u_0(x)$ in $X$, according to the properties of the solution $(x,t)\mapsto u(x,t)=(S_t u_0)(x)$ of system~\cref{init_syst} corresponding to this initial condition: for instance, $u_0$ is called a \emph{bistable initial condition (connecting $m_-$ to $m_+$)} if $u$ is a bistable solution (connecting $m_-$ to $m_+$).
\end{definition}
\begin{notation}
For every ordered pair $(m_-,m_+)$ of points of $\mmm$, let
\[
\Xbist(m_-,m_+)
\]
denote the subset of $X$ made of bistable initial conditions connecting $m_-$ to $m_+$.
\end{notation}
By definition this set is positively invariant under the semi-flow of system~\cref{init_syst}. 
\subsubsection{Invasion speed of a solution stable at one end of space}
\begin{definition}[invasion speed of a solution stable at one end of space]
\label{def:invasion_speed}
Let $m_-$ and $m_+$ be two points in $\mmm$, and let $(x,t)\mapsto u(x,t)$ be a solution of system~\cref{init_syst}.
\begin{itemize}
\item If this solution is stable close to $m_+$ at the right end of space, then let us call \emph{set of no invasion speeds to the right} the set
\[
\SnoinvPlus[u] = \bigl\{c>0:\sup_{x\ge ct}\abs{u(x,t)-m_+}\to0\text{ when }t\to+\infty\bigr\}
\,,
\]
and let us call \emph{invasion speed to the right}, and let us denote by $\cInvPlus[u]$, the infimum of this set:
\[
\cInvPlus[u] = \inf(\SnoinvPlus[u])
\,.
\]
According to \cref{lem:upper_bound_on_invasion_speed} below, the set $\SnoinvPlus[u]$ is nonempty, so that the invasion speed $\cInvPlus[u]$ is a finite (nonnegative) quantity.
\item Similarly, if this solution is stable close to $m_-$ at the left end of space, then let us call \emph{set of no invasion speeds to the left} the set
\[
\SnoinvMinus[u] = \bigl\{c>0:\sup_{x\le -ct}\abs{u(x,t)-m_-}\to0\text{ when }t\to+\infty\bigr\}
\,,
\]
and let us call \emph{invasion speed to the left}, and let us denote by $\cInvMinus[u]$, the infimum of this set:
\[
\cInvMinus[u] = \inf(\SnoinvMinus[u])
\,.
\]
Again, according to \cref{lem:upper_bound_on_invasion_speed} below, this invasion speed $\cInvMinus[u]$ is a finite (nonnegative) quantity.
\end{itemize}
\end{definition}
\subsection{Preliminary results}
\label{subsec:prel_res}
\subsubsection{Sufficient condition for stability at one end of space, bound on invasion speed, and exponential decrease beyond invasion}
\label{subsubsec:preliminary_results_stability_at_ends_of_space}
As everywhere in the paper, let us assume that $V$ is of class $\ccc^2$ and satisfies the coercivity hypothesis \cref{hyp_coerc}.
\begin{lemma}[sufficient condition for stability at one end of space]
\label{lem:sufficient_condition_stability_right_end_of_space}
For every $m$ in $\mmm$, there exists a positive quantity $\deltaAsymptStab$ (depending on $V$ and $\ddd$ and $m$) such that every solution $(x,t)\mapsto u(x,t)$ of system \cref{init_syst} satisfying
\begin{equation}
\label{hyp_lem_sufficient_condition_stability_right_end_of_space}
\limsup_{\widebar{x}\to+\infty}\int_{\widebar{x}}^{\widebar{x}+1}\Bigl(\bigl(u(x,0)-m\bigr)^2+u_x(x,0)^2\Bigr)\, dx \le \deltaAsymptStab^2
\end{equation}
is stable close to $m$ at the right end of space. 
\end{lemma}
The square exponent of the quantity $\deltaAsymptStab$ at the right-hand side of inequality \cref{hyp_lem_sufficient_condition_stability_right_end_of_space}) is here only to ensure dimensional homogeneity with respect to the function $u_0$ and other parameters along the paper.
\begin{corollary}[to be bistable is an open condition]
\label{cor:bist}
For every ordered pair $(m_-,m_+)$ of points of $\mmm$, the set $\Xbist(m_-,m_+)$ is nonempty and open in $X$. 
\end{corollary}
\begin{lemma}[upper bound on the invasion speed of a solution stable at one end of space]
\label{lem:upper_bound_on_invasion_speed}
For every solution $u:(x,t)\mapsto u(x,t)$ of system \cref{init_syst} which is stable close to a point $m$ of $\mmm$ at the right end of space, the quantity $\cInvPlus[u]$ is bounded from above by a quantity depending on $V$ and $\ddd$ and $m$, but not on the particular solution $u$. 
\end{lemma}
\begin{lemma}[exponential decrease beyond invasion speed]
\label{lem:exponential_decrease_beyond_invasion_speed}
For every solution $u$ $(x,t)\mapsto u(x,t)$ of system \cref{init_syst} which is stable close to a point $m$ of $\mmm$ at the right end of space, and for every positive quantity $c$ larger than $\cInvPlus[u]$, there exists positive quantities $K[u]$ and $\nu$ such that, for every nonnegative time $t$, 
\begin{equation}
\label{exponential_decrease_beyond_invasion_speed}
\sup_{x\in[c t,+\infty)}\abs{u(x,t)-m} \le K[u] \exp(-\nu t)
\,.
\end{equation}
The quantity $\nu$ depends on $V$ and $\ddd$ and $m$ and the difference $c-\cInvPlus[u]$ (only), whereas $K[u]$ depends additionally on $u$. 
\end{lemma}
\subsubsection{Asymptotic energy of a bistable solution: definition and upper semi-continuity}
\label{subsubsec:asympt_en}
\begin{proposition}[asymptotic energy of a bistable solution]
\label{prop:asympt_en}
For every bistable solution $(x,t)\mapsto u(x,t)$ of system \cref{init_syst} connecting two points $m_-$ and $m_+$ of $\mmm$ in the same level set of $V$, there exists a quantity $\eeeAsympt[u]$ in $\{-\infty\}\cup\rr$ such that, for all real quantities $c_-$ and $c_+$ satisfying 
\[
\cInvMinus[u] < c_-
\quad\text{and}\quad
\cInvPlus[u] < c_+
\,,
\]
the following limit holds:
\[
\int_{-c_- t}^{c_+ t}\biggl(\frac{1}{2}\abs{u_x(x,t)}_{\ddd}^2+V\bigl(u(x,t)\bigr)-V(m_\pm)\biggr)\, dx \to \eeeAsympt[u]
\quad\text{as}\quad
t\to +\infty
\,.
\]
\end{proposition}
\begin{definition}[asymptotic energy of a bistable solution connecting two points of $\mmm$ in the same level set of $V$]
If $u:(x,t)\mapsto u(x,t)$ is a bistable solution connecting two points of $\mmm$ in the same level set of $V$, let us call \emph{asymptotic energy of $u$} the quantity $\eeeAsympt[u]$ provided by \cref{prop:asympt_en}. Similarly, if a function $u_0$ in $X$ is a bistable initial condition connecting two points of $\mmm$ in the same level set of $V$, let us call \emph{asymptotic energy of $u_0$} the asymptotic energy of the solution of \cref{init_syst} corresponding to this initial condition, and let us denote by $\eeeAsympt[u_0]$ this asymptotic energy. 
\end{definition}
This leads us to define the \emph{asymptotic energy functional} as follows (for every ordered pair $(m_-,m_+)$ of points of $\mmm$):
\begin{equation}
\label{E_infty}
\eeeAsymptmMinusmPlus : \Xbist(m_-,m_+)  \to\{-\infty\}\sqcup\rr 
\,, \quad
u_0  \mapsto \eeeAsympt[u_0]
\,.
\end{equation}
In the next proposition, statements hold with respect to the topology induced on $\Xbist(m_-,m_+)$ by the $X$-norm and the usual topology on $\{-\infty\}\sqcup\rr$. 
\begin{proposition}[upper semi-continuity of the asymptotic energy]
\label{prop:scs_asympt_en}
For every $m$ in $\mmm$, the asymptotic energy functional $\eeeAsymptmMinusmPlus$ is upper semi-continuous; equivalently, for every real quantity $E$, the set 
\[
\eeeAsymptmMinusmPlus^{-1}\bigl([E,+\infty)\bigr)
=
\left\{u_0\in \Xbist(m_-,m_+): \eeeAsympt[u_0]\ge E\right\}
\] 
is closed. 
\end{proposition}
Under an additional generic assumption on $V$, it will be proved (conclusion \cref{item:thm_main_nonnegative_asympt_energy_approach_bistable_stationary} of \cref{thm:main}) that this asymptotic energy is either nonnegative or equal to $-\infty$. In this case, the subset of $\Xbist(m_-,m_+)$ made of bistable initial conditions having a finite asymptotic energy is thus also closed.

Let us mention another result of the same nature: \cite[Theorem~2]{Risler_globCVTravFronts_2008}, stating that the speed of a travelling front invading a stable equilibrium is \emph{lower} semi-continuous with respect to initial condition. Let us also mention that the preliminary results of this \cref{subsec:prel_res} extend in higher space dimension, \cite{Risler_noInvasionCaseHigherSpace_2020}. 
\subsection{Stationary solutions and standing terraces}
\subsubsection{Hamiltonian system of stationary solutions}
\label{subsubsec:not_ham}
A stationary solution of system~\cref{init_syst} is a function $\xi\mapsto u(\xi)$ from $\rr$ to $\rr^d$ which is a solution of the second order differential system
\begin{equation}
\label{ham_ord_2}
\ddd  u''=\nabla V(u) \,,
\end{equation}
or equivalently a function $\xi\mapsto\bigl(u(\xi),v(\xi)\bigr)$ from $\rr$ to $\rr^{2d}$ of the first order differential system
\begin{equation}
\label{ham_ord_1}
\frac{d}{d\xi}
\begin{pmatrix}
u \\ v
\end{pmatrix}
=
\begin{pmatrix}
v \\ \ddd^{-1}\nabla V(u)
\end{pmatrix}
\,.
\end{equation}
Since the potential $V$ is assumed to be of class $\ccc^2$, every such solution $u$ is of class $\ccc^3$ and its derivative $v$ is of class $\ccc^2$. 

Observe that \cref{ham_ord_2} is a Hamiltonian system. Indeed, if the Hamiltonian $H$ and the nondegenerate skew-symmetric matrix $\Omega$ are defined as
\begin{equation}
\label{def_Hamiltonian_intro}
H:\rr^d\times\rr^d\to\rr,\quad
(u,v)\mapsto \frac{1}{2}\abs{v}_{\ddd}^2 - V(u)
\quad\text{and}\quad
\Omega = \begin{pmatrix}
0 & \ddd^{-1} \\ -\ddd^{-1} & 0
\end{pmatrix}
\,,
\end{equation}
then the system \cref{ham_ord_1} can be rewritten as
\[
\frac{d}{d\xi}
\begin{pmatrix}
u \\ v
\end{pmatrix}
=
\begin{pmatrix}
0 & \ddd^{-1} \\ -\ddd^{-1} & 0
\end{pmatrix}
\begin{pmatrix}
-\nabla V(u) \\ \ddd v
\end{pmatrix}
=\Omega \cdot \nabla H (u,v)
\,.
\]
and the Hamiltonian is a conserved quantity for this system: for every solution $\xi\mapsto u(\xi)$ of \cref{ham_ord_2}, 
\begin{equation}
\label{Ham_conserved_quantity}
\frac{d}{d\xi}H\bigl(u(\xi),u'(\xi)\bigr) = 0
\,.
\end{equation}
The (formal) energy defined in~\cref{form_en} is the integral over space of the Lagrangian
\begin{equation}
\label{def_Lagrangian_intro}
L:\rr^d\times\rr^d\to\rr,\quad
(u,v)\mapsto \frac{1}{2}\abs{v}_{\ddd}^2 + V(u)
\,.
\end{equation}
\subsubsection{Bistable stationary solutions}
\begin{notation}
Let $\sss$ denote the set of \emph{stationary solutions} of system~\cref{init_syst}, that is of global solutions $\xi\mapsto u(\xi)$ of system~\cref{ham_ord_2}.
If $(m_-,m_+)$ is an ordered pair of points of $\mmm$ (they might be equal or different), let 
\begin{equation}
\label{def_Phi_zero_m_minus_m_plus}
\Phi_0(m_-,m_+)
\end{equation}
denote the set of \emph{bistable stationary solutions connecting $m_-$ to $m_+$}, that is the set of functions $\xi\mapsto \phi(\xi)$ in $\sss$ satisfying
\[
\phi(\xi)\xrightarrow[\xi\to -\infty]{} m_-
\quad\text{and}\quad
\phi(\xi)\xrightarrow[\xi\to +\infty]{}  m_+
\]
(including the homogeneous solution $\phi\equiv m_\pm$ if $m_-=m_+$). This notation refers to the fact that these solutions might be viewed as ``standing fronts'' (fronts travelling at speed zero, at least if $m_-$ differs from $m_+$ --- if $m_-$ and $m_+$ are equal the denomination ``standing pulse'' suits better); the index ``$0$'' in the notation \cref{def_Phi_zero_m_minus_m_plus} refers to the vanishing speed of these solutions, by contrast with the fronts travelling at nonzero speed considered in the companion papers \cite{Risler_globalBehaviour_2016,Risler_globalBehaviourHyperbolicGradient_2017,Risler_globalBehaviourRadiallySymmetric_2017}. 
Observe that the set $\Phi_0(m_-,m_+)$ is exactly made of stationary solutions of system~\cref{init_syst} that are altogether bistable solutions connecting $m_-$ to $m_+$, in other words,
\[
\Phi_0(m_-,m_+) = \sss \,\cap\, \Xbist(m_-,m_+)
\,.
\]
Since according to equality \cref{Ham_conserved_quantity} the Hamiltonian \cref{def_Hamiltonian_intro} is constant along a stationary solution, only if $V(m_-)=V(m_+)$ can the set $\Phi_0(m_-,m_+)$ be nonempty. For every real quantity $\valueOfV$, let $\mmm_{\valueOfV}$ denote the set of nondegenerate local minimum points in the level set $V^{-1}(\{\valueOfV\})$:
\[
\mmm_{\valueOfV} = \mmm\cap V^{-1}(\{\valueOfV\}) = \{m\in\mmm : V(m)=\valueOfV\}
\,,
\]
and let $\Phi_0(\valueOfV)$ denote the union, for all ordered pairs $(m_-,m_+)$ of points of $\mmm_{\valueOfV}$, of the sets $\Phi_0(m_-,m_+)$:
\begin{equation}
\label{definition_PhiZero_of_h}
\Phi_0(\valueOfV)=\bigsqcup_{(m_-,m_+)\in \mmm_{\valueOfV}^2}\Phi_0(m_-,m_+) 
\,.
\end{equation}
For every function $\xi\mapsto u(\xi)$ in $\sss$, let
\[
I(u)=\bigcup_{\xi\in\rr} \ \bigl\{\bigl(u(\xi),u'(\xi)\bigr)\bigr\}
\]
denote the ``image'' of $u$ (its trajectory in the phase space $\rr^{2d}$ of the Hamiltonian system~\cref{ham_ord_1}), and let $I(\bigl(\Phi_0(\valueOfV)\bigr)$ denote the union of all images of bistable stationary solutions connecting points of $\mmm_{\valueOfV}$:
\[
I(\bigl(\Phi_0(\valueOfV)\bigr) = \bigcup_{\phi\in\Phi_0(\valueOfV)} \  \ I(\phi)
\,.
\]
For every $m$ in $\mmm$, let $\Ws(m,0)$ denote the stable manifold of the equilibrium $(m,0)$ for the Hamiltonian system~\cref{ham_ord_1}, and let $\Wu(m,0)$ denote its unstable manifold. It follows from this notation that for every real quantity $\valueOfV$, 
\[
I(\bigl(\Phi_0(\valueOfV)\bigr) = \left(\bigcup_{m\in\mmm_{\valueOfV}}\{(m,0)\}\right) \cup \left(\bigcup_{(m_-,m_+)\in\mmm_{\valueOfV}^2} \Wu(m_-,0)\cap \Ws(m_+,0)\right) 
\,.
\]
\end{notation}
The shapes of some examples of this set $I(\bigl(\Phi_0(\valueOfV)\bigr)$ are shown on \cref{fig:shape_pot}, for various familiar examples of potential $V$ (and for one or several values of the quantity $\valueOfV$), in the scalar case $d$ equals $1$. 
\begin{figure}[!htbp]
\centering
\includegraphics[width=\textwidth]{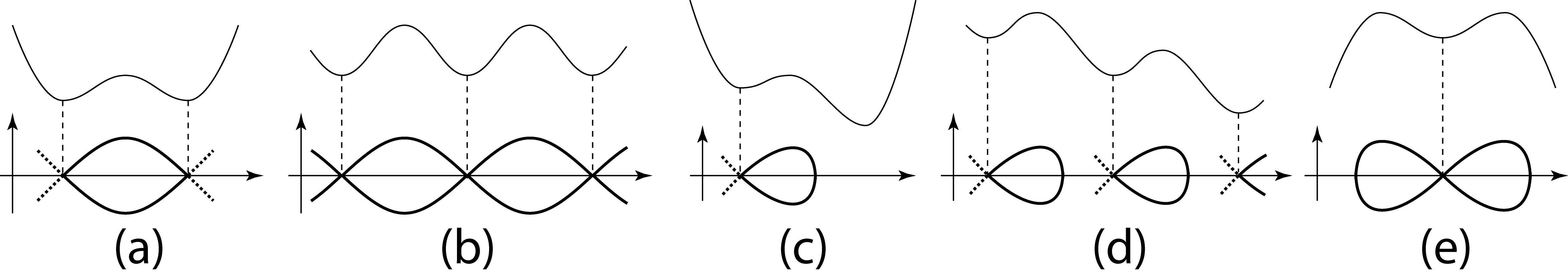}
\caption{Shapes of familiar examples of potentials and of the corresponding phase portraits of system~\cref{ham_ord_1} governing stationary solutions of system~\cref{init_syst}: (a) the Allen--Cahn equation, (b) the over-damped sine--Gordon equation, (c) the Nagumo equation, (d) the over-damped sine--Gordon equation with constant forcing, and (e) the ``subcritical'' Allen--Cahn equation. The corresponding equations are briefly discussed in \vref{sec:examples}.}
\label{fig:shape_pot}
\end{figure}
This set $I(\bigl(\Phi_0(\valueOfV)\bigr)$ will be called upon in conclusion \cref{item:thm_main_nonnegative_asympt_energy_approach_bistable_stationary} of \cref{thm:main}. 
\subsubsection{Standing terraces of bistable stationary solutions}
\label{subsubsec:def_stand_terrace}
To state conclusion \cref{item:thm_main_approach_standing_terrace_and_value_asymptotic_energy} of \cref{thm:main}, the next definitions are required. Some comments on the terminology and related references are given at the end of this \namecref{subsubsec:def_stand_terrace}.
\begin{figure}[!htbp]
\centering
\includegraphics[width=\textwidth]{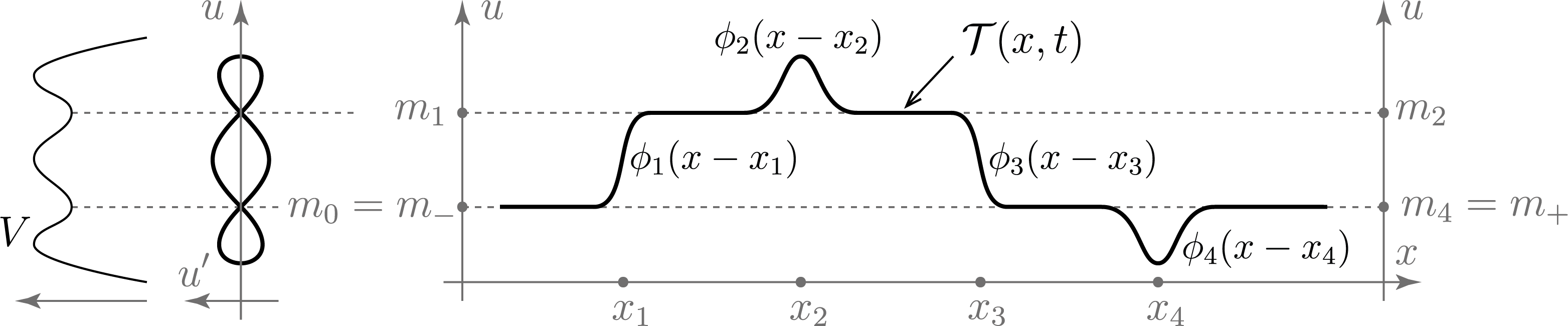}
\caption{Standing terrace (with four items, $q=4$).}
\label{fig:standing_terrace}
\end{figure}
\begin{definition}[standing terrace of bistable stationary solutions, \cref{fig:standing_terrace}]
\label{def:standing_terrace}
Let $\valueOfV$ be a real quantity and let $m_-$ and $m_+$ be two points of $\mmm_{\valueOfV}$. A function
\[
\ttt : \rr\times[0,+\infty)\to\rr^d,\quad (x,t)\mapsto \ttt(x,t)
\]
is called a \emph{standing terrace of bistable stationary solutions, connecting $m_-$ to $m_+$,} if there exists a nonnegative integer $q$ such that:
\begin{enumerate}
\item if $q$ equals $0$, then $m_-=m_+$ and, for every real quantity $x$ and every nonnegative time $t$, 
\[
\ttt(x,t)=m_-=m_+
\,;
\]
\item if $q=1$, then there exist:
\begin{itemize}
\item a function $\phi_1$ in $\Phi_0(m_-,m_+)$ (a bistable stationary solution connecting $m_-$ to $m_+$),
\item and a $\ccc^1$-function $t\mapsto x_1(t)$ defined on $[0,+\infty)$ and satisfying $x_1'(t)\to0$ as time goes to $+\infty$,
\end{itemize}
such that, for every real quantity $x$ and every nonnegative time $t$, 
\[
\ttt(x,t) = \phi_1\bigl( x-x_1(t)\bigr)
\,;
\]
\label{item:def_standing_terrace_q_equals_one}
\item if $q$ is not smaller than $2$, then there exists $q-1$ points $m_1$, …, $m_{q-1}$ of $\mmm_{\valueOfV}$ (not necessarily distinct), and if $m_-$ is denoted by $m_0$ and $m_+$ by $m_q$, then for each integer $i$ in $\{1,\dots,q\}$, there exists:
\begin{itemize}
\item a function $\phi_i$ in $\Phi_0(m_{i-1},m_i)$ (a bistable stationary solution connecting $m_{i-1}$ to $m_i$),
\item and a $\ccc^1$-function $t\mapsto x_i(t)$ defined on $[0,+\infty)$ and satisfying $x_i'(t)\to0$ as time goes to $+\infty$,
\end{itemize}
such that, for every integer $i$ in $\{1,\dots,q-1\}$,
\[
x_{i+1}(t)-x_i(t)\to +\infty 
\quad\text{as}\quad
t\to +\infty
\,,
\]
and such that, for every real quantity $x$ and every nonnegative time $t$, 
\[
\ttt(x,t) = m_- + \sum_{i=1}^q \Bigl[\phi_i\bigl(x-x_i(t)\bigr)-m_{i-1}\Bigr]
\,.
\]
\label{item:def_standing_terrace_q_larger_than_one}
\end{enumerate}
\end{definition}
\begin{remark}
Item \cref{item:def_standing_terrace_q_equals_one} may have been omitted in this definition, since it boils down to item \cref{item:def_standing_terrace_q_larger_than_one} with $q$ equals $1$.
\end{remark}
The terminology ``propagating terrace'' was introduced by A. Ducrot, T. Giletti, and H. Matano in \cite{DucrotGiletti_existenceConvergencePropagatingTerrace_2014} (and subsequently used by several other authors \cite{Polacik_propagatingTerracesAsymptOneDimSym_2017,Polacik_propTerracesProofGibbonsConj_2016,GilettiRossi_pulsatingSolMultBistMultiStab_2019,MatanoPolacik_dynNonnegSolOneDimRDII_2020,Polacik_propagatingTerracesDynFrontLikeSolRDEquationsR_2020,GilettiMatano_existenceUniquenessPropTerr_2020,PauthierRademacherU_WeakStrongInteractKinks_2021}) to denote a stacked family (a layer) of travelling fronts in a (scalar) reaction-diffusion equation. This led the author to introduce the analogous ``standing terrace'' terminology above, because this terminology is convenient to denote an object otherwise requiring a quite long description, and because it provides a convenient homogeneity in the formulation of the results of \cite{Risler_globalBehaviour_2016} describing the asymptotic behaviour of all bistable solutions of systems like \cref{init_syst}, since this behaviour involves altogether two ``propagating terraces'' (one to the left and one to the right) and a ``standing terrace'' in between. This terminology is also used in the companion papers \cite{Risler_globalBehaviourHyperbolicGradient_2017,Risler_globalBehaviourRadiallySymmetric_2017}.

The author hopes that these advantages balance some drawbacks of this terminological choice. Like the fact that the word ``terrace'' is probably more relevant in the scalar case $d$ equals $1$ (see the pictures in \cite{DucrotGiletti_existenceConvergencePropagatingTerrace_2014,Polacik_propagatingTerracesDynFrontLikeSolRDEquationsR_2020}) than in the more general case of systems considered here. Or the fact that the definitions above and in \cite{Risler_globalBehaviour_2016} are different from the original definition of \cite{DucrotGiletti_existenceConvergencePropagatingTerrace_2014} in that they involve not only the profiles of particular (standing or travelling) solutions, but also their positions (denoted above by $x_i(t)$). 

To finish, observe that in the present context:
\begin{itemize}
	\item terraces are only made of bistable solutions, by contrast with the propagating terraces introduced and used by the authors cited above;
	\item standing terraces are approached by solutions but are (in general) not solutions themselves;
	\item a standing terrace may be nothing but a single stable homogeneous equilibrium (if $q$ equals $0$).
\end{itemize}
\subsubsection{Energy of a bistable stationary solution and of a standing terrace}
\label{subsubsec:def_energy_stat_sol_stand_terrace}
\begin{definition}[energy of a bistable stationary solution]
Let $\xi\mapsto \phi(\xi)$ be a bistable stationary solution connecting two points $m_-$ and $m_+$ of $\mmm$ (which must therefore belong to the same level set of $V$). The quantity 
\[
\eee[\phi] = \int_{\rr}\Bigl(\frac{1}{2}\abs{\phi'(\xi)}_{\ddd} ^2+V\bigl(\phi(\xi)\bigr)-V(m_\pm)\Bigr)\, d\xi
\]
is called the \emph{energy of the (bistable) stationary solution $\phi$}. Observe that this integral converges: since $m_-$ and $m_+$ are in $\mmm$ they are nondegenerate local minimum points, thus $\phi(\xi)$ approaches its limits at both ends of space at an exponential rate. 
\end{definition}
%
%This definition will be used in this paper only in the case where $\valueOfV$ equals $0$. 
%
\begin{definition}[energy of a standing terrace]
\label{def:energy_standing_terrace}
Let $\valueOfV$ denote a real quantity and let $\ttt$ denote a standing terrace of bistable stationary solutions. With the notation of the two definitions above, the quantity $\eee[\ttt]$ defined as
\begin{enumerate}
\item if $q$ equals $0$, then $\eee[\ttt]=0$,
\item if $q$ equals $1$, then $\eee[\ttt]=\eee[\phi_1]$,
\item if $q$ is not smaller than $2$, then $\eee[\ttt]=\sum_{i=1}^q\eee[\phi_i]$,
\end{enumerate}
is called the \emph{energy of the standing terrace $\ttt$}. 
\end{definition}
\subsection{Generic hypotheses on the potential}
\label{subsec:generic_assupmt_pot}
The goal of this \namecref{subsec:generic_assupmt_pot} is to state two generic hypotheses on $V$ (\cref{subsubsec:genericl_hyp_V}), which will be called upon in the two versions of the main result of this paper. The additional material (notation and definitions) required to state these hypotheses is provided in the next two \namecref{subsubsec:Escape_dist}. 
\subsubsection{Escape distance of a minimum point}
\label{subsubsec:Escape_dist}
\begin{notation}
For every $u$ in $\rr^d$, let $\sigma\bigl(D^2V(u)\bigr)$ denote the spectrum (the set of eigenvalues) of the Hessian matrix of $V$ at $u$, and let $\eigVmin(u)$ denote the minimum of this spectrum:
\begin{equation}
\label{def_eigVmin_of_u}
\eigVmin(u) = \min \Bigl(\sigma\bigl(D^2V(u)\bigr)\Bigr)
\,.
\end{equation}
\end{notation}
\begin{definition}[Escape distance of a nondegenerate local minimum point]
For every $m$ in $\mmm$, let us call \emph{Escape distance of $m$}, and let us denote by $\dEsc(m)$, the supremum of the set
\begin{equation}
\label{set_for_definition_Escape_distance}
\Bigl\{\delta \in[0,1]: \text{ for all } u \text{ in } \rr^d \text{ satisfying } \abs{u-m}_{\ddd}\le \delta, \ \eigVmin(u) \ge\frac{1}{2} \eigVmin(m) \Bigr\}
\,.
\end{equation}
\end{definition}
Since the quantity $\eigVmin(u)$ varies continuously with $u$, this Escape distance $\dEsc(m)$ is positive (thus in $(0,1]$). In addition, for all $u$ in $\rr^d$ such that $\abs{u-m}_{\ddd}$ is not larger than $\dEsc(m)$, the following inequality holds:
\begin{equation}
\label{property_dEsc}
\eigVmin(u) \ge\frac{1}{2} \eigVmin(m)
\,.
\end{equation}
This ``Escape'' distance will be used in two different ways. 
\begin{enumerate}
\item To ``track'' the position in space where a solution ``escapes'' a neighbourhood of $m$ (this position is called ``leading edge'' by Muratov in a framework including monostable invasion \cite{Muratov_globVarStructPropagation_2004,MuratovNovaga_globExpConvTW_2012,MuratovZhong_thresholdSymSol_2013}). The reason for the upper-case letter ``E'' in ``Esc'' is to make a difference with another escape distance ``$\desc(m)$'' that will be required later (see \cref{subsubsec:cont_fire}).
\item To normalize the bistable stationary solutions with respect to translation invariance (in the next \namecref{subsubsec:norm_bist}).
\end{enumerate}
\begin{remark}
There is nothing profound behind the choice of using the $\abs{\cdot}_{\ddd}$ rather than the usual Euclidean norm of $\rr^d$ in the definition \cref{set_for_definition_Escape_distance} of the set defining the Escape distance. The sole reason is that \vref{lem:ham_equ_hyp} fits better with this choice. 
\end{remark}
\begin{remark}
If the set $\mmm$ was assumed to be finite (this would hold for instance if all critical points of $V$ were assumed to be nondegenerate, which is generically true), then a ``uniform'' positive quantity $\dEsc$ could be picked, small enough so that inequality \cref{property_dEsc} holds with $\dEsc$ instead of $\dEsc(m)$ for every point $m$ in $\mmm$. 
\end{remark}
\subsubsection{Normalization of bistable stationary solutions with respect to translation invariance}
\label{subsubsec:norm_bist}
According to assertion \cref{item:escape_spatial_asymptotics_sw} of \vref{lem:ham_equ_hyp}, for every ordered pair $(m_-,m_+)$ of points of $\mmm$ and for every \emph{nonconstant} stationary solution $\xi\mapsto \phi(\xi)$ connecting $m_-$ to $m_+$, 
\[
\sup_{\xi\in\rr}\abs{\phi(\xi)-m_-}_{\ddd}>\dEsc(m_-)
\quad\text{and}\quad
\sup_{\xi\in\rr}\abs{\phi(\xi)-m_+}_{\ddd}>\dEsc(m_+)
\,,
\]
see \cref{fig:esc_distance}. 
\begin{figure}[!htbp]
\centering
\includegraphics[width=0.6\textwidth]{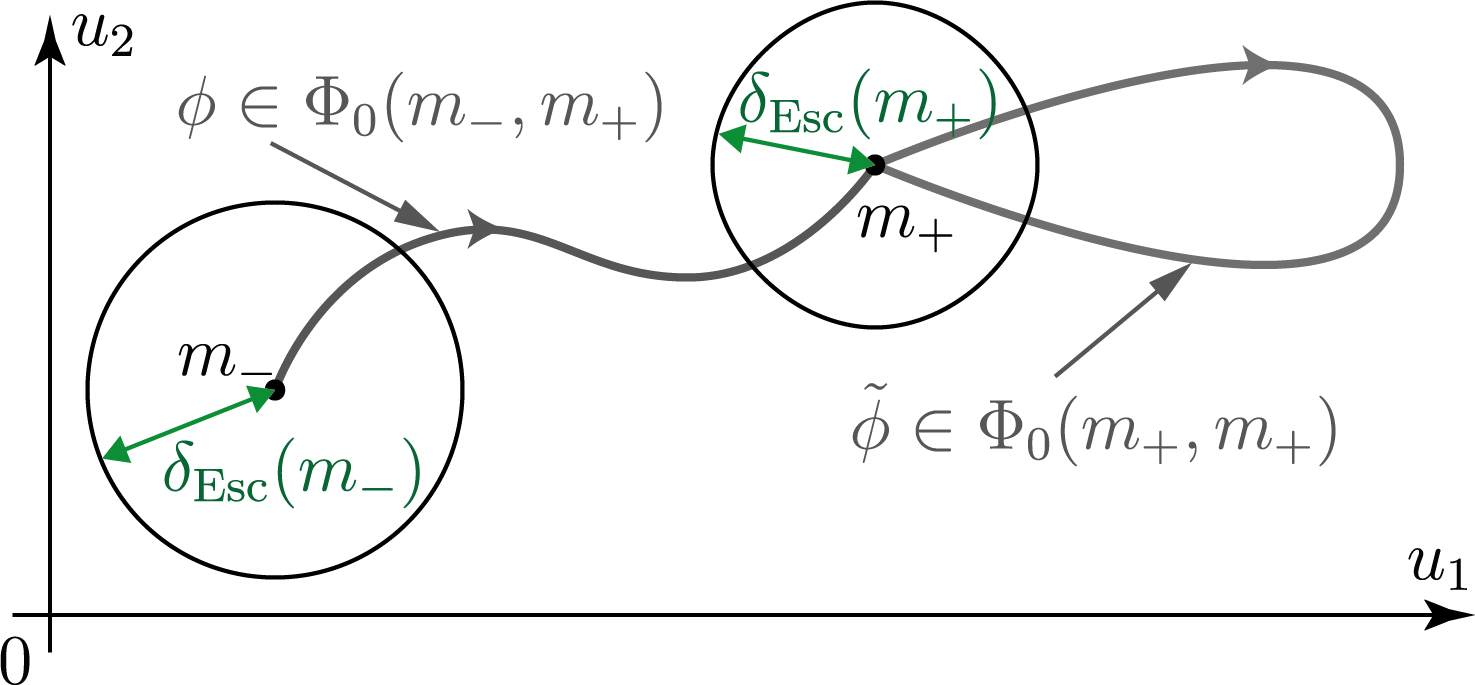}
\caption{Nonconstant bistable stationary solutions escape at least at a $\abs{\cdot}_{\ddd}$-distance $\dEsc(m_\pm)$ from their limits at $\pm\infty$.}
\label{fig:esc_distance}
\end{figure}
As a consequence, a unique ``normalized'' translate of this solution can be picked up by demanding that, say, the translate be exactly at a $\abs{\cdot}_{\ddd}$-distance $\dEsc(m_+)$ of his right-end limit $m_+$ at $\xi=0$, and closer for every positive $\xi$ (see \cref{fig:norm_stat}). Here is a more formal definition. 
\begin{figure}[!htbp]
\centering
\includegraphics[width=0.75\textwidth]{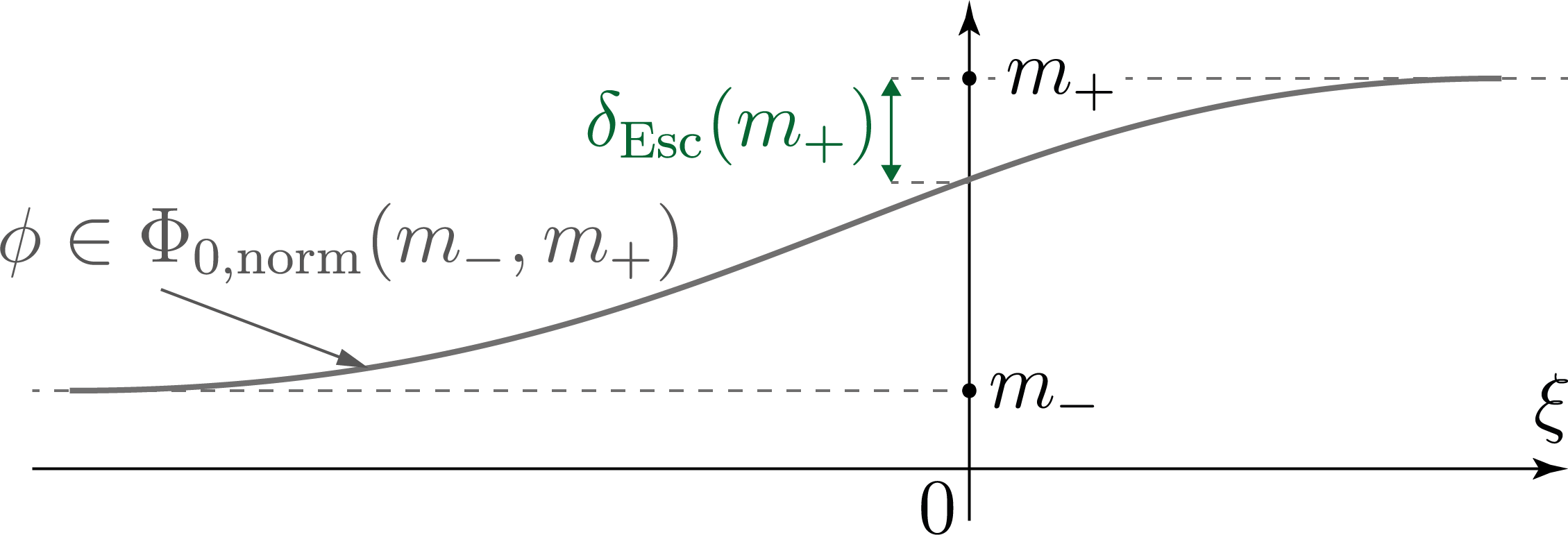}
\caption{Normalized bistable stationary solution.}
\label{fig:norm_stat}
\end{figure}
\begin{definition}[normalized bistable stationary solution]
For $(m_-, m_+)$ in $\mmm^2$, a bistable stationary solution $\phi$ connecting $m_-$ to $m_+$ is said to be \emph{normalized} if 
\begin{equation}
\label{normalization_condition}
\abs{\phi(0)-m_+}_{\ddd} =\dEsc(m_+) 
\quad\text{and}\quad
\abs{\phi(\xi)-m_+}_{\ddd} <\dEsc(m_+)\quad\text{for all}\quad \xi>0
\,.
\end{equation}
Let us denote by $\PhiZeroNorm(m_-,m_+)$ the subset of $\Phi_0(m_-,m_+)$ made of function $\phi$ satisfying the normalization condition \cref{normalization_condition}.
\end{definition}
\subsubsection{Statement of the generic hypotheses}
\label{subsubsec:genericl_hyp_V}
Let $\valueOfV$ denote a real quantity. The next two hypotheses will be called upon in the statement of \cref{thm:main}. 
\begin{description}
\item[\hypOnlyMinLabel{\valueOfV}]\hypertarget{hypOnlyMin} All critical points of $V$ in the level set $V^{-1}(\{\valueOfV\})$ are nondegenerate minimum points. In other words, for every $u$ in $\rr^d$, 
\[
V(u)=\valueOfV
\quad\text{and}\quad
\nabla V(u)=0
\implies
D^2V(u)
\text{ is positive definite.}
\]
\item[\hypDiscStatLabel{\valueOfV}]\hypertarget{hypDiscStat} For every $m_-$ in $\mmm_{\valueOfV}$, the set
\[
\bigsqcup_{m_+\in\mmm_{\valueOfV}}\bigl\{ \bigl( \phi(0), \phi'(0) \bigr) : \phi \in \PhiZeroNorm(m_-,m_+) \bigr\}
\]
is totally disconnected in $\rr^{2d}$ (that is, its connected components are singletons). Equivalently, the set 
\begin{equation}
\label{def_phiZeronorm_of_mmm_h}
\PhiZeroNorm(\valueOfV) = \bigcup_{(m_-,m_+)\in \mmm_{\valueOfV}^2}\PhiZeroNorm(m_-,m_+)
\end{equation}
is totally disconnected for the topology of compact convergence (uniform convergence on compact subsets of $\rr$). 
\end{description}
A formal proof of the genericity (with respect to the potential $V$) of these two hypotheses is provided in \cite{JolyRisler_genericTransversalityTravStandFrontsPulses_2023}.
\subsection{Main result}
\label{subsec:main_res}
Let us recall the definition of the distance between a point $z_0$ and a subset $\Sigma$ of $\rr^{2d}$:
\[
\dist(z_0,\Sigma) = \inf_{z\in \Sigma}\abs{z-z_0}
\]
where $\abs{\cdot}$ denotes (say) the usual euclidean norm on $\rr^{2d}$. Here is the main result of this paper. 
\begin{theorem}
\label{thm:main}
Let $V$ denote a function in $\ccc^2(\rr^d,\rr)$ satisfying the coercivity hypothesis \cref{hyp_coerc}. Then, for every real quantity $\valueOfV$ and for every bistable solution $(x,t)\mapsto u(x,t)$ of system~\cref{init_syst} connecting two (possibly equal) points of $\mmm_{\valueOfV}$, if the asymptotic energy of this solution is not equal to $-\infty$, then the following conclusions hold.
\begin{enumerate}
\item The time derivative $u_t(x,t)$ goes to $0$ as $t\to+\infty$, uniformly with respect to $x$ in $\rr$.
\label{item:thm_main_time_derivative_goes_to_zero}
\item Both invasion speeds of the solution (to the left and to the right) vanish.
\label{item:thm_main_invasion_speeds_vanish}
\item If hypothesis \textup{(\hyperlink{hypOnlyMin}{\hypOnlyMinRef{\valueOfV}})} holds, then the asymptotic energy of the solution is nonnegative and the quantity 
\[
\sup_{x\in\rr} \ \dist\biggl(\Bigl(u(x,t),u_x(x,t)\Bigr) \,, \,I\bigl(\PhiZero(\valueOfV)\bigr)\biggr)
\]
goes to $0$ as time goes to $+\infty$.
\label{item:thm_main_nonnegative_asympt_energy_approach_bistable_stationary}
\item If both hypotheses \textup{(\hyperlink{hypOnlyMin}{\hypOnlyMinRef{\valueOfV}})} and \textup{(\hyperlink{hypDiscStat}{\hypDiscStatRef{\valueOfV}})} hold, then: 
\label{item:thm_main_approach_standing_terrace_and_value_asymptotic_energy}
\begin{enumerate} 
\item the solution approaches (uniformly in space, as time goes to $+\infty$) a standing terrace of bistable stationary solutions, 
\label{item:thm_main_approach_standing_terrace}
\item and the asymptotic energy of the solution equals the energy of this standing terrace. 
\label{item:thm_main_value_asymptotic_energy}
\end{enumerate}
\end{enumerate}
\end{theorem}
With symbols, conclusions \cref{item:thm_main_approach_standing_terrace,item:thm_main_value_asymptotic_energy} of this theorem can be stated as follows: for every ordered pair $(m_-,m_+)$ of points of $\mmm_{\valueOfV}$, and for every $u_0$ in $\Xbist(m_-,m_+)$, if 
\[
\eeeAsympt[u_0]>-\infty
\text{ and hypotheses \textup{(\hyperlink{hypOnlyMin}{\hypOnlyMinRef{\valueOfV}})} and \textup{(\hyperlink{hypDiscStat}{\hypDiscStatRef{\valueOfV}})} hold,} 
\]
then there exists a standing terrace $\ttt$ of bistable stationary solutions, connecting $m_-$ to $m_+$, such that, if $(x,t)\mapsto u(x,t)$ denotes the solution of system \cref{init_syst} corresponding to the initial condition $u_0$, then
\[
\sup_{x\in\rr} \abs{u(x,t) - \ttt(x,t)} \to 0
\quad\text{as}\quad
t\to+\infty
\quad\text{and}\quad
\eeeAsympt[u_0]=\eee[\ttt]
\,.
\]
If conversely the asymptotic energy of the solution equals $-\infty$, then the corresponding solution certainly takes values where the potential is negative as time increases, but no precise information on its behaviour will be given in this paper. In the companion paper \cite{Risler_globalBehaviour_2016} (following \cite{Risler_globCVTravFronts_2008}), it is proved (only if the diffusion matrix $\ddd$ is equal to identity) that in this case the solution displays travelling fronts invading the stable equilibria at both ends of space. Results of the same kind have been obtained (in a different setting limited to the scalar case $d$ equals $1$) by Muratov and X. Zhong in \cite{MuratovZhong_thresholdSymSol_2013}.

A series of standard results can be recovered as direct consequences of \cref{thm:main,prop:scs_asympt_en}. Those results deal with:
\begin{itemize}
\item existence of homoclinic or heteroclinic orbits of the Hamiltonian systems governing stationary solutions;
\item the basin of attraction of the homogeneous stationary solution induced by a point of $\mmm$ (or the border of this basin of attraction).
\end{itemize}
To avoid disrupting the attention of the reader from the main result, the statements of these auxiliary results (and their proofs) are postponed until \cref{sec:exist_res_bas_att}. 
\subsection{Additional remarks and comments}
\subsubsection{Examples}
Elementary examples corresponding to the potentials illustrated on \cref{fig:shape_pot} (in the scalar case $d$ equals $1$) are discussed in \cref{sec:examples}.
\subsubsection{Convergence for a stronger topology}
Due to the smoothing properties of system~\cref{init_syst} (see \cref{subsec:glob_exist}), convergence towards the standing terrace in conclusion \cref{item:thm_main_approach_standing_terrace} of \cref{thm:main} holds with respect to the $\cccb{2}$-norm. 
\subsubsection{Limit sets of profiles and quasi-convergence}
For every solution $(x,t)\mapsto u(x,t)$ of system \cref{init_syst}, we may consider, following the notation of \cite{PauthierPolacik_largeTimeBehavSolParabEquR_2018}, its $\Omega$-limit set $\Omega(u)$ defined as
\[
\begin{aligned}
\Omega(u) = \bigl\{&\varphi\in\cccb{0}:\text{$u(x_n+\cdot,t_n)\to\varphi$, in $\Linftyloc$, as $n\to+\infty$, for some}\\
&\text{real sequences $(x_n)_{n\in\nn}$ and $(t_n)_{n\in\nn}$ with $t_n\to+\infty$ as $n\to+\infty$} \bigr\}
\,.
\end{aligned}
\]
Conclusion \cref{item:thm_main_time_derivative_goes_to_zero} of \cref{thm:main} ensures that $\Omega(u)$ consists entirely of steady states (stationary solutions of system \cref{init_syst} (in particular the solution is \emph{quasi-convergent} \cite{PauthierPolacik_largeTimeBehavSolParabEquR_2018}); and conclusion \cref{item:thm_main_nonnegative_asympt_energy_approach_bistable_stationary} ensures that $\Omega(u)$ is, more precisely, included in the set $\Phi_0(\valueOfV)$ of profiles of stationary solutions that are homoclinic or heteroclinic to points of $\mmm_{\valueOfV}$. More refined results of the same flavour (obtained by completely different methods, in the scalar case $d$ equals $1$ and under weaker assumptions otherwise) have been recently obtained by A. Pauthier and P. Poláčik \cite{PauthierPolacik_largeTimeBehavSolParabEquR_2018,PauthierPolacik_largeTimeBehavIIEqualLimitsInfinity_2021,PauthierPolacik_largeTimeBehavIIIUnstablelLimitsInfinity_2021} (see also \cite{MatanoPolacik_dynNonnegSolOneDimRDI_2016,MatanoPolacik_dynNonnegSolOneDimRDII_2020}). 
\subsubsection{Long range interaction between bistable stationary solutions}
\label{subsubsec:long_range}
The next step following \cref{thm:main} would be to study more precisely the long-range interactions between the bistable stationary solutions involved in the standing terrace describing the asymptotic behaviour of the solution (in the case $q\ge2$); and to provide explicit expressions for the asymptotics (at first order) of these interactions. This long-lasting question is treated in details for gradient systems in the recent paper \cite{BethuelSmets_motionLawFrontsScalarRDEquEqualDepthMultWellPot_2017} of Béthuel and Smets, however in a more restrictive framework (scalar equation and nonnegative potential, see also the conjecture p. 59 of \cite{BethuelOrlandi_slowMotion_2011}). It has also been addressed by finite-dimensional reduction methods for more general reaction-diffusion equation and systems, see \cite{Ei_motionPulses_2002} and especially the monograph \cite{MielkeZelik_multiPulseEvolutionAndSpaceTimeChaosDissipativeSystems_2009} by A. Mielke and S. Zelik.

Following conclusion \cref{item:thm_main_approach_standing_terrace} of \cref{thm:main}, since the stationary solutions involved in the standing terrace must a priori go (slowly) away from one another, the first order interaction term between two successive stationary solutions $u_{i}$ and $u_{i+1}$, $i\in\{1,\dots, q-1\}$ should be repulsive, and this should give some restrictions on the families $(u_{1},\dots, u_{q})$ that can actually occur in such a terrace. Elementary examples are discussed in \cref{sec:examples}, but general statements and rigorous proofs are beyond the scope of this paper. 
\subsubsection{Extension to the damped hyperbolic case}
\label{subsubsec:damp_hyp}
It is likely that results similar to those of this paper hold for the damped hyperbolic system 
\begin{equation}
\label{hyp_syst}
\alpha u_{tt}+u_t=-\nabla V(u)+\ddd u_{xx}
\,,
\end{equation}
obtained by adding an inertial term $\alpha u_{tt}$ (where $\alpha$ is a positive non necessarily small quantity) to the parabolic system~\cref{init_syst} considered here. Actually, for a dissipation matrix $\ddd$ equal to identity, the global behaviour of all bistable solutions of system \cref{hyp_syst} is described in \cite{Risler_globalBehaviourHyperbolicGradient_2017}, and the ``relaxation'' part of this description, which presents strong similarities with the results of the present paper, calls upon the same methods. 
\subsubsection{Some unsolved questions}
\label{subsubsec:uns_quest}
Here are some additional (and, to the knowledge of the author, open) questions that raise naturally from the statements above. 
\begin{enumerate}
\item Do conclusions \cref{item:thm_main_nonnegative_asympt_energy_approach_bistable_stationary,item:thm_main_approach_standing_terrace} of \cref{thm:main} still hold without hypothesis \textup{(\hyperlink{hypOnlyMin}{\hypOnlyMinRef{\valueOfV}})} (stating that all critical points in the level set $V^{-1}(\{0\})$ of the potential are nondegenerate local minima)? (this question is twofold: hypothesis \textup{(\hyperlink{hypOnlyMin}{\hypOnlyMinRef{\valueOfV}})} may be relaxed assuming that those critical points are still minimum points but possibly degenerate ones, or dropping any additional hypothesis about these critical points). 
\item Does conclusion \cref{item:thm_main_approach_standing_terrace_and_value_asymptotic_energy} of \cref{thm:main} still hold without hypothesis \textup{(\hyperlink{hypDiscStat}{\hypDiscStatRef{\valueOfV}})} (stating that the set of normalized bistable stationary solutions of zero Hamiltonian is totally disconnected in $X$)? For instance, does it hold for the $O(2)$-symmetric ``real Ginzburg--Landau'' potentials (see \cref{fig:GL_potentials}):
\begin{figure}[!htbp]
\centering
\includegraphics[width=0.7\textwidth]{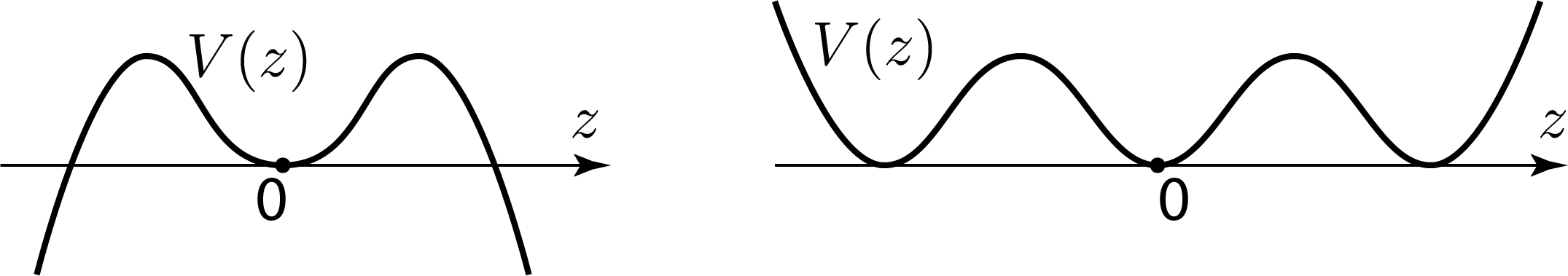}
\caption{Graphs of the restrictions to the real line of the two examples of potentials $z\mapsto V(z)$ for which hypothesis \hypDiscStatRef{\valueOfV} does not hold.}
\label{fig:GL_potentials}
\end{figure}
\[
V: \cc\simeq\rr^2 \to \rr\,, \quad
z  \mapsto \frac{\abs{z}^2}{2}-\frac{\abs{z}^4}{4}
\quad\text{or}\quad
z \mapsto \frac{\abs{z}^2}{2}-\frac{4}{\sqrt{3}}\frac{\abs{z}^4}{4}+\frac{\abs{z}^6}{6}
\quad\text{?}
\]
\item Is it possible to construct an example where conclusion \cref{item:thm_main_approach_standing_terrace} of \cref{thm:main} holds, where the number $q$ of items involved in the standing terrace equals $1$, but where the ``position'' $x_1(t)$ does not converge as time goes to $+\infty$? (note that this surely requires that the stationary solution be ``degenerated'' in the sense that it be not a hyperbolic equilibrium for the semi-flow of system \cref{init_syst}). On the other hand, does $x_1(t)$ always converge if $V$ is analytic? (see \cite{Simon_asymptoticsEvolEqu_1983}).
\end{enumerate}
\subsection{Organization of the paper} 
\begin{itemize}
\item The next \cref{sec:preliminaries} is devoted to some preliminaries (functional framework, existence of solutions, preliminary computations on spatially localized functionals, notation).
\item The preliminary results \cref{lem:sufficient_condition_stability_right_end_of_space,lem:upper_bound_on_invasion_speed,lem:exponential_decrease_beyond_invasion_speed,cor:bist} on spatial asymptotics of bistable solutions are proved in \cref{sec:spat_asympt}. 
\item \Cref{prop:asympt_en} (existence of asymptotic energy) is proved in \cref{sec:asympt_en}. 
\item \Cref{prop:scs_asympt_en} (upper semi-continuity of asymptotic energy) is proved in \cref{sec:upp_semicont_asympt_en}.
\item Conclusions \cref{item:thm_main_time_derivative_goes_to_zero,item:thm_main_invasion_speeds_vanish} of \cref{thm:main} are proved in \cref{sec:relaxation}.
\item Conclusion \cref{item:thm_main_nonnegative_asympt_energy_approach_bistable_stationary} of \cref{thm:main} is proved in \cref{sec:approach_set_bist_sol}.
\item Conclusion \cref{item:thm_main_approach_standing_terrace_and_value_asymptotic_energy} of \cref{thm:main} is proved in \cref{sec:cv_standing_terrace_value_asympt_en}. 
\end{itemize}
The remaining \namecrefs{sec:exist_res_bas_att} can be viewed as appendices. 
\begin{itemize}
\item \Cref{sec:exist_res_bas_att} is devoted to some standard results (\cref{cor:nonneg_stat,cor:neg_stat,cor:nonneg_dyn,cor:neg_dyn}) concerning existence of homoclinic or heteroclinic stationary solutions and the basin of attraction of a stable homogeneous solution, retrieved as direct consequences of \cref{thm:main,prop:scs_asympt_en}.
\item Elementary examples illustrating the results --- and the questions raised --- are discussed in \cref{sec:examples}.
\item The proof of the existence of an attracting ball for the semi-flow follows from the coercivity hypothesis \cref{hyp_coerc} and is given in \cref{sec:att_ball}. 
\item \Cref{sec:prop_ham_app} is devoted to two lemmas concerning stationary solutions of system~\cref{init_syst}, extensively used in \cref{sec:approach_set_bist_sol} to prove the approach to the set $I\bigl(\PhiZero(\valueOfV)\bigr)$.
\item Finally, a rough discussion of the map between initial conditions and the space of asymptotic patterns (and the regularity of this map) is carried out in \cref{sec:sp_as_patt}.
\end{itemize} 
\section{Preliminaries}
\label{sec:preliminaries}
As everywhere else, let us consider a function $V$ in $\ccc^2(\rr^d,\rr)$ satisfying the coercivity hypothesis \cref{hyp_coerc}. 
\subsection{Functional framework}
\label{subsec:funct_fram}
For $u$ in $H^1_\text{loc}(\rr,\rr^d)$, let 
\begin{equation}
\label{def_Honeul_norm}
\norm{u}_{\Honeul} = \sup_{\widebar{x}\in\rr} \, \left(\int_{\widebar{x}}^{\widebar{x}+1}\bigl(\abs{u(x)}^2 + \abs{u'(x)}^2 \bigr) \, dx\right)^{1/2} 
= \sup_{\widebar{x}\in\rr} \, \norm{u}_{H^1([\widebar{x},\widebar{x}+1],\rr^d)} \le \infty
\,,
\end{equation}
and let us consider the uniformly local Sobolev space $X$ defined as
\[
\begin{aligned}
X &= 
\Honeul \\
&= \left\{ u \in H^1_\text{loc}(\rr,\rr^d) : \norm{u}_{\Honeul} < \infty 
\quad\text{and}\quad 
\lim_{\widebar{x}\to 0} \norm{T_{\widebar{x}} u - u}_{\Honeul} = 0 \right\}
\,.
\end{aligned}
\]
As already mentioned in \cref{subsubsec:coerc_glob_exist}, this space is the most convenient with respect to the estimates on localized energy and $L^2$-norm that are used all along the paper. 
\subsection{Global existence of solutions and attracting ball for the semi-flow}
\label{subsec:glob_exist}
Since $V$ is assumed to be of class $\ccc^2$, the function $u\mapsto\nabla V(u)$ is of class $\ccc^1$, and therefore the nonlinearity $u(\cdot)\mapsto -\nabla V\bigl(u(\cdot)\bigr)$ in system~\cref{init_syst} is locally Lipschitz in $X$. Thus local existence of solutions in that space follows from general results (see for instance Henry's book \cite{Henry_geomSemilinParab_1981}). 

More precisely, for every $u_0$ in $X$, system~\cref{init_syst} has a unique (mild) solution $t\mapsto S_t u_0$ in $\ccc^0\bigl([0,T_{\max}),X\bigr)$ with initial condition $u_0$. This solution depends continuously on the initial condition $u_0$ and is defined up to a (unique) maximal time of existence $T_{\max}=T_{\max}[u_0]$ in $(0,+\infty]$. The following global existence result is proved in \cref{sec:att_ball}.
\begin{proposition}[global existence of solutions and attracting ball]
\label{prop:glob_exist_sol_att_ball} 
For every function $u_0$ in $X$, the solution $t\mapsto S_t u_0$ of system~\cref{init_syst} with initial condition $u_0$ is defined up to $+\infty$ in time. In addition, there exists positive quantities
\[
\RmaxInfty[u_0]
\quad\text{and}\quad
\Tatt[u_0]
\quad\text{and}\quad
\RattInfty
\]
such that
\begin{align}
\label{maximal_radius_excursion_Linfty}
\sup_{t\ge0} \norm{x\mapsto(S_t u_0)(x)}_{\Linfty} &\le \RmaxInfty[u_0] \\
\text{and}\qquad
\label{asymptotic_radius_Linfty}
\sup_{t\ge \Tatt[u_0]}\norm{x\mapsto(S_t u_0)(x)}_{\Linfty} &\le \RattInfty
\,.
\end{align}
The quantity $\RattInfty$ depends only on $V$ and $\ddd$, whereas $\RmaxInfty[u_0]$ and $\Tatt[u_0]$ depend also on the initial condition $u_0$ (more specifically, on $\norm{u_0}_{X}$). 
\end{proposition}
%
%Thus, the ball of radius $\RattInfty$ and centre at the origin of $\rr^d$ is an attractive ball for the $L^\infty(\rr,\rr^d)$-norm for the semi-flow, and the time required to enter this attracting ball is uniform in the sense that it depends only on the size (in $X$) of the initial condition. 
%
In addition, system~\cref{init_syst} has smoothing properties (Henry \cite{Henry_geomSemilinParab_1981}). For every nonnegative integer $k$ and every quantity $\alpha$ in $(0,1)$, let us recall that the Hölder space $\cccb{k,\alpha}$ is defined by the norm
\[
\norm{u}_{\cccb{k,\alpha}} = \sup_{x\in\rr}\abs{u(x)} + \sup_{x\in\rr}\abs{u'(x)} + \dots + \sup_{x\in\rr}\abs{u^{(k)}(x)} + \sup_{(x,y)\in\rr^2,\, x\not=y}\frac{\abs{u(x)-u(y)}}{\abs{x-y}^\alpha}
\,.
\]
Due to these smoothing properties, since $V$ is of class $\ccc^2$ (and as a consequence the nonlinearity $v\mapsto -\nabla V(v)$ is of class $\ccc^1$), for every quantity $\alpha$ in the interval $(0,1)$, every solution $t\mapsto S_t u_0$ in $\ccc^0([0,+\infty),X)$ actually belongs to
\[
\ccc^0\left((0,+\infty),\cccb{2,\alpha}\right)\cap \ccc^1\left((0,+\infty),\cccb{0,\alpha}\right)
\,,
\]
and, for every positive quantity $\varepsilon$, the quantities
\begin{equation}
\label{bound_u_ut_ck}
\sup_{t\ge\varepsilon}\norm{S_t u_0}_{\cccb{2,\alpha}}
\quad\text{and}\quad
\sup_{t\ge\varepsilon}\norm{\frac{d(S_t u_0)}{dt}(t)}_{\cccb{0,\alpha}}
\end{equation}
are finite. 
\subsection{Asymptotic compactness}
The following standard compactness statement (see for instance \cite[1963]{MatanoPolacik_entireSolutionBistableParabEquTwoCollidingPulses_2017}, from where the notation and sketch of proof below are reproduced) will be called upon several times in \cref{sec:relaxation,sec:approach_set_bist_sol,sec:cv_standing_terrace_value_asympt_en}. 
\begin{lemma}[asymptotic compactness]
\label{lem:compactness}
For every solution $(x,t)\mapsto u(x,t)$ of system \cref{init_syst}, and for every sequence $(x_n,t_n)_{n\in\nn}$ in $\rr\times[0,+\infty)$ such that $t_n\to+\infty$ as $n\to+\infty$, there exists a entire solution $\widebar{u}$ of system \cref{init_syst} in 
\[
\ccc^0\left(\rr,\cccb{2}\right)\cap \ccc^1\left(\rr,\cccb{0}\right)
\,,
\]
such that, up to replacing the sequence $(x_n,t_n)_{n\in\nn}$ by a subsequence, 
\begin{equation}
\label{compactness}
D^{2,1}u(x_n+\cdot,t_n+\cdot)\to D^{2,1}\widebar{u}
\quad\text{as}\quad
n\to+\infty
\,,
\end{equation}
uniformly on every compact subset of $\rr^2$, where the symbol $D^{2,1}v$ stands for $(v,v_x,v_{xx},v_t)$ (for $v$ equal to $u$ or $\widebar{u}$). 
\end{lemma}
\begin{proof}
Let us consider the sequence of functions 
\[
\rr\times(-t_n,+\infty)\to\rr^d\,,
\quad
(\xi,s)\mapsto u(x_n+\xi,t_n+s)\,,
\quad n\in\nn
\,.
\]
According to the Hölder estimates \cref{bound_u_ut_ck} on the solution $u$, up to replacing the sequence $(x_n,t_n)$ by a subsequence, the convergence \cref{compactness} holds on any given compact subset of $\rr^2$. The conclusion follows from a diagonal extraction procedure. 
\end{proof}
\subsection{Time derivative of (localized) energy and \texorpdfstring{$L^2$}{L2}-norm of a solution}
\label{subsec:1rst_ord}
Let $(x,t)\mapsto u(x,t)$ denote a solution of system~\cref{init_syst} and let $m$ be a point in $\mmm$. Key ingredients in the proofs rely on appropriate combinations of the two most natural functionals to consider, namely the energy (Lagrangian) and the $L^2$-norm of the distance to $m$, defined (at least formally) as
\[
\int_{\rr}\Bigl(\frac{1}{2}\abs{u_x(x,t)}_{\ddd}^2 + V\bigl(u(x,t)\bigr)-V(m)\Bigr)\, dx
\quad\text{and}\quad
\int_{\rr}\frac{1}{2}\bigl(u(x,t)-m\bigr)^2 \, dx
\,.
\]
Let $x\mapsto\psi(x)$ denote a function in the space $W^{2,1}(\rr,\rr)$ (that is a function belonging to $L^1(\rr)$, together with its first and second derivatives). To simplify the presentation, let us assume here that 
\begin{equation}
\label{assumption_m_is_the_origin_and_V_of_m_equals_0}
m=0_{\rr^d}
\quad\text{and}\quad
V(m)=V(0_{\rr^d}) = 0
\,.
\end{equation}
In order to deal with convergent integrals, let us multiply by $\psi$ the integrands of the two aforementioned functionals. Then, the time derivatives of these functionals read
\begin{equation}
\label{ddt_loc_en}
\frac{d}{d t}\int_{\rr} \psi\ \Bigl(\frac{1}{2}\abs{u_x}_{\ddd} ^2+V(u)\Bigr)\, dx = \int_{\rr} \bigl( - \psi\ u_t^2 - \psi'\ \ddd u_x \cdot u_t \bigr) \, dx
\end{equation}
and 
\begin{equation}
\label{ddt_loc_L2}
\begin{aligned}
\frac{d}{d t}\int_{\rr} \psi\frac{1}{2}u^2\, dx & = \int_{\rr} \Bigl( \psi\ \bigl( - u\cdot \nabla V(u) - \abs{u_x}_{\ddd} ^2 \bigr) - \psi'\ u \cdot \ddd u_x \Bigr) \, dx \\
& = \int_{\rr} \Bigl( \psi\ \bigl( - u\cdot \nabla V(u) - \abs{u_x}_{\ddd} ^2 \bigr) +  \frac{1}{2}\psi''\abs{u}_{\ddd} ^2 \Bigr) \, dx \,. 
\end{aligned}
\end{equation}
Here are some basic observations about these expressions.
\begin{itemize}
\item The variation of the (localized) energy is the sum of a (nonpositive) ``dissipation'' term and a additional ``flux'' term. 
\item The variation of the (localized) $L^2$-norm is similarly made of two ``main'' terms and an additional ``flux'' term. Among the two main terms, the second one is nonpositive, and so is the first one if the quantity $u\cdot\nabla V(u)$ is positive, that is:
\begin{itemize}
\item for $\abs{u}$ large (according to the coercivity hypothesis \cref{hyp_coerc} on $V$);
\item for $\abs{u}$ small according to the assumption \cref{assumption_m_is_the_origin_and_V_of_m_equals_0}. 
\end{itemize}
\item The second integration by parts that is performed on the last term of the expression \cref{ddt_loc_L2} of the time derivative of the $L^2$-functional will lead to slightly simpler calculations, but is not essential.
\item The slower the weight function $\psi$ varies, the smaller the flux terms are. More precisely, it seems relevant to choose $\psi$ as a function satisfying, for a small positive quantity $\varepsilon$,
\[
\abs{\psi'(x)}\le\varepsilon\psi(x)
\quad\text{and}\quad
\abs{\psi''(x)}\le\varepsilon\psi(x)
\quad\text{for all }x\text{ in }\rr.
\]
This way, if $\varepsilon$ is small enough, the flux terms might very well be ``dominated'' by the other terms of the right-hand sides of equalities \cref{ddt_loc_en,ddt_loc_L2}. 
\item An appropriate combination of these two functionals might display coercivity properties, again for $\abs{u}$ large (according to the coercivity hypothesis \cref{hyp_coerc} on $V$) and for $\abs{u}$ small if $0_{\rr^d}$ is in the set $\mmm_0\,$. 
\end{itemize}
These observations will be put in practice several times along the following pages: 
\begin{enumerate}
\item to prove the existence of an attracting ball for the flow (\cref{sec:att_ball});
\item to gain some control on the spatial asymptotics of bistable solutions (\cref{sec:spat_asympt,sec:upp_semicont_asympt_en});
\item to state the approximate decrease of localized energies (\cref{subsec:localized_energy,sec:upp_semicont_asympt_en}). For those localized energies the weight function that will be used (denoted by $\chi$ instead of $\psi$) will depend not only on $x$ but also on $t$, thus the right-hand side of equality \cref{ddt_loc_en} will comprise an additional ``flux'' term with weight $\chi_t\,$. 
\end{enumerate}
\subsection{Miscellanea}
\label{subsec:misc}
\subsubsection{Notation for the eigenvalues of the diffusion matrix}
Let $\eigDmin$ ($\eigDmax$) denote the smallest (respectively, largest) of the (positive) eigenvalues of the diffusion matrix $\ddd$; the following inequalities hold: 
\[
0<\eigDmin\le\eigDmax
\,.
\]
\subsubsection{Second order estimates for the potential around a minimum point}
\begin{lemma}[second order estimates for the potential around a minimum point]
\label{lem:estim_from_def_escape}
For every $m$ in $\mmm$ and every $u$ in $\rr^d$ satisfying $\abs{u-m}_{\ddd} \le\dEsc(m)$, the following estimates hold:
\begin{align}
\label{posit_pot_around_loc_min}
V(u)-V(m)&\ge \frac{\eigVmin(m)}{4} (u-m)^2 \,, \\
\text{and}\qquad
\label{v_nablaV_controls_square_around_loc_min}
(u-m)\cdot \nabla V(u) &\ge \frac{\eigVmin(m)}{2} (u-m)^2 \,, \\
\text{and}\qquad
\label{v_nablaV_controls_pot_around_loc_min}
(u-m)\cdot \nabla V(u) &\ge V(u)-V(m)
\,.
\end{align}
\end{lemma}
\begin{proof}
Take $m$ in $\mmm$ and $u$ in $\rr^d$ such that $\abs{u-m}_{\ddd}$ is not larger than $\dEsc(m)$, let us write
\[
u-m = v
\iff
u=m+v
\,,
\]
and let us introduce the $\ccc^2$-function $f$ defined on $[0,1]$ by 
\[
f(\theta)=V(m+\theta v)-V(m)
\,.
\]
Then, 
\[
f(0)=0
\qquad\text{and}\quad
f'(0)=0
\qquad\text{and}\quad
f(1)=V(u)-V(m)
%\qquad\text{and}\quad
%f'(1) = v\cdot \nabla V(m+ v)
\,,
\]
and for every $\theta$ in $[0,1]$,
\[
f'(\theta) = v\cdot \nabla V(m+\theta v)
\qquad\text{and}\qquad
f''(\theta) = \Bigl( D^2 V(m+\theta v)\cdot v  \Bigr) \cdot v 
\,,
\]
and thus according to inequality \vref{property_dEsc} defining $\dEsc(m)$, for every $\theta$ in $[0,1]$,
\begin{equation}
\label{lower_bound_f_prime_prime}
f''(\theta) \ge \frac{\eigVmin(m)}{2} v^2
\,.
\end{equation}
Now, according to Taylor's Theorem with Lagrange remainder, 
\[
\begin{aligned}
f(1) &= f(0) + f'(0) + \frac{1}{2} f''(\theta_1)
\quad\text{for some } \theta_1 \text{ in } (0,1) \,, \\
\text{and}\qquad
f'(1) &= f'(0) + f''(\theta_2) 
\quad\text{for some } \theta_2 \text{ in } (0,1) \,, \\
\text{and}\qquad
f(0) &= f(1) - f'(1) + \frac{1}{2} f''(\theta_3)
\quad\text{for some } \theta_3 \text{ in } (0,1) 
\,,
\end{aligned}
\]
and in view of the lower bound \cref{lower_bound_f_prime_prime} inequalities \cref{posit_pot_around_loc_min,v_nablaV_controls_square_around_loc_min,v_nablaV_controls_pot_around_loc_min} follow from these three equalities, respectively. \Cref{lem:estim_from_def_escape} is proved. 
\end{proof}
\begin{remark}
Inequality \cref{posit_pot_around_loc_min} above will actually only be used under the weaker form 
\begin{equation}
\label{nonnegative_pot_around_loc_min}
V(u) - V(m) \ge 0
\end{equation}
(for every $u$ in $\rr^d$ satisfying $\abs{u-m}_{\ddd} \le\dEsc(m)$).
\end{remark}
\subsubsection{Lower quadratic hull for the potential at minimum points}
\label{subsubsec:min_conv}
For the computations carried in the next \cref{sec:spat_asympt}, it will be convenient to introduce the quantity $\qLowHull$ defined as the minimum of the convexities of the lower quadratic hulls of $V$ at the points of $\mmm$ (see \cref{fig:low_quad_hull}). 
\begin{figure}[!htbp]
\centering
\includegraphics[width=0.5\textwidth]{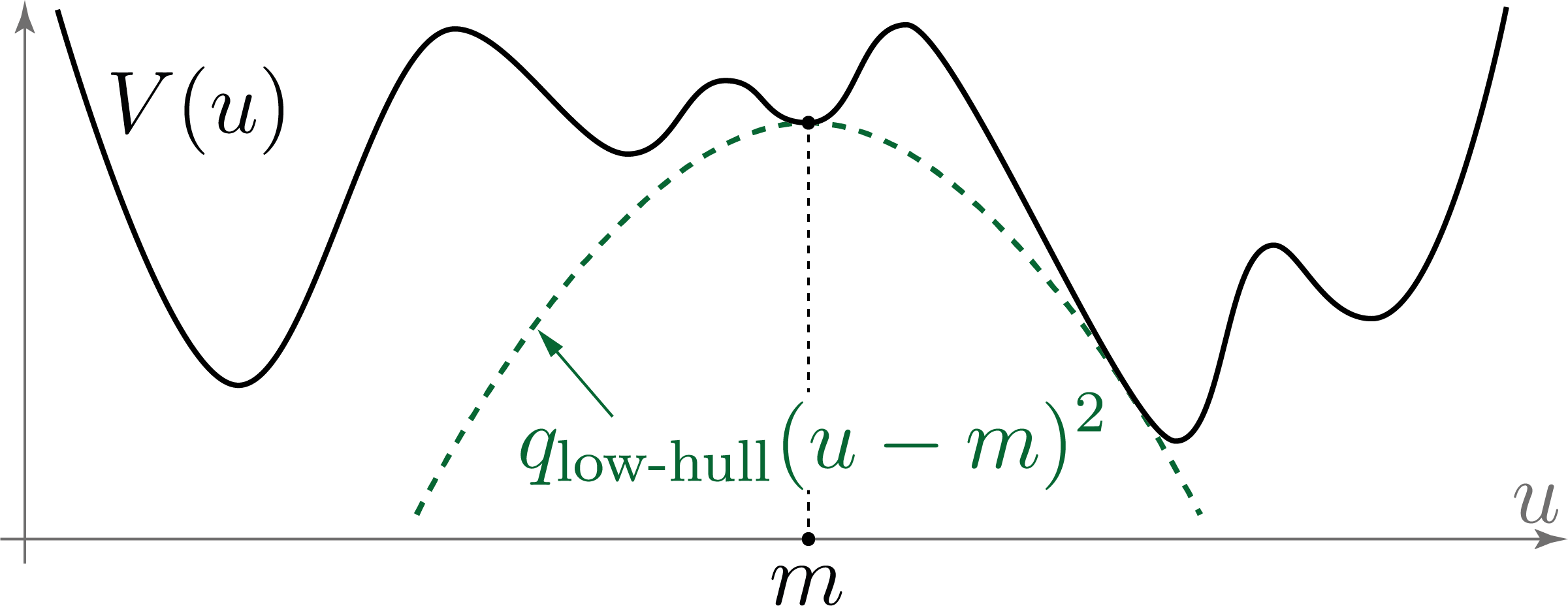}
\caption{Lower quadratic hull of the potential at a minimum point (definition of the quantity $\qLowHull$).}
\label{fig:low_quad_hull}
\end{figure}
With symbols,
\[
\qLowHull = \min_{m\in\mmm}\ \inf_{u\in\rr^d\setminus \{m\}}\ \frac{V(u)-V(m)}{(u-m)^2}
\,.
\]
This quantity $\qLowHull$ is negative as soon as $m$ is not a global minimum point of $V$ (and nonnegative otherwise), and according to hypothesis \cref{hyp_coerc} it is finite (in other words it is not equal to $-\infty$). This definition ensures that, for every $m$ in $\mmm$ and for all $u$ in $\rr^d$,
\begin{equation}
\label{ineq_neg_hull}
V(u)-V(m) - \qLowHull (u-m)^2\ge 0
\,,
\end{equation}
see \cref{fig:low_quad_hull}. Let us introduce the following quantity (it will be used as the \emph{coefficient of the energy} in the firewall function defined in \cref{subsec:firewall}):
\[
\coeffEn=\frac{1}{\max(1, - 4\, \qLowHull)}
\,.
\]
It follows from this definition that $\coeffEn$ is in $(0,1]$ and that, for every $m$ in $\mmm$ and for all $u$ in $\rr^d$,
\begin{equation}
\label{def_weight_en}
\coeffEn \bigl(V(u)-V(m)\bigr) +  \frac{1}{4}(u-m)^2\ge 0
\,.
\end{equation}
\section{Stability at one end of space}
\label{sec:spat_asympt}
The aim of this \namecref{sec:spat_asympt} is to provide preliminary results concerning the solutions stable at one end of space, and in particular to prove the results of \cref{subsubsec:preliminary_results_stability_at_ends_of_space}, namely \cref{lem:sufficient_condition_stability_right_end_of_space,lem:upper_bound_on_invasion_speed,lem:exponential_decrease_beyond_invasion_speed,cor:bist}.
\subsection{Set-up}
\label{subsec:setup_spat_as}
As everywhere else, let us consider a function $V$ in $\ccc^2(\rr^d,\rr)$ satisfying the coercivity hypothesis \cref{hyp_coerc}. Let $m$ be a point in $\mmm$, let $u_0$ be a function in $X$, and let $(x,t)\mapsto u(x,t)$ denote the solution of system \cref{init_syst} corresponding to the initial condition $u_0$. For notational convenience, let us introduce the ``normalized potential'' $V^\dag$ and the ``normalized solution'' $u^\dag$ defined as
\begin{equation}
\label{def_normalized_potential_solution}
V^\dag(v)=V(m +v)-V(m)
\quad\text{and}\quad
u^\dag(x,t) = u(x,t)-m
\,.
\end{equation}
Thus the origin $0_{\rr^d}$ of $\rr^d$ is to $V^\dag$ what $m$ is to $V$, and $u^\dag$ is a solution of system \cref{init_syst} with potential $V^\dag$ instead of $V$; and, for all $(x,t)$ in $\rr\times[0,+\infty)$, 
\[
V^\dag\bigl(u^\dag(x,t)\bigr) = V\bigl(u(x,t)\bigr)-V(m)
\,.
\]
It follows from inequality \cref{def_weight_en} satisfied by $\coeffEn$ that, for all $v$ in $\rr^d$, 
\begin{equation}
\label{def_weight_en_V_dag}
\coeffEn \, V^\dag(v) +  \frac{v^2}{4}\ge 0
\,,
\end{equation}
and it follows from inequalities \cref{v_nablaV_controls_square_around_loc_min,v_nablaV_controls_pot_around_loc_min} that, for all $v$ in $\rr^d$ satisfying $\abs{v}\le\dEsc(m)$, 
\begin{align}
\label{v_nablaV_controls_square_around_loc_min_dag}
v\cdot \nabla V^\dag(v) &\ge \frac{\eigVmin(m)}{2} v^2 \,, \\
\text{and}\qquad
\label{v_nablaV_controls_pot_around_loc_min_dag}
v\cdot \nabla V^\dag(v) &\ge V^\dag(v)
\,.
\end{align}
\subsection{Firewalls}
\label{subsec:firewall}
\subsubsection{Definition} 
\label{subsubsec:firewall_definition}
The proof relies on the definition of a functional that is an appropriate combination of the energy and the $L^2$-norm of the solution, localized by an appropriate weight function (see \cref{subsec:1rst_ord} and comments therein). As already mentioned in~\cref{subsec:1rst_ord}, the key points are:
\begin{itemize}
\item to choose the coefficients for the energy and the $L^2$-norm in such a way that the resulting function is coercive;
\item to choose a weight function that varies slowly enough to recover from expressions~\cref{ddt_loc_en,ddt_loc_L2} some decrease of the resulting function.
\end{itemize}

Concerning the first of these two points, the quantity $\coeffEn$ defined in \namecref{subsubsec:min_conv} \ref{subsubsec:min_conv} % Problème de césure
is a convenient coefficient for energy if the coefficient for the $L^2$-norm is chose equal to $1$, as can be seen from inequality \cref{def_weight_en_V_dag}. Concerning the second point, let $\kappa$ denote a positive quantity, small enough so that 
\begin{equation}
\label{conditions_on_kappa}
\frac{\coeffEn \, \kappa^2 \, \eigDmax}{4} \le\frac{1}{2} 
\quad\text{and}\quad
\frac{\kappa^2\, \eigDmax}{2}\le\frac{\eigVmin(m)}{8}
\end{equation}
(those conditions will be used to prove inequality \cref{dt_fire_spat_as} below); this quantity may be chosen as
\begin{equation}
\label{def_kappa}
\kappa = \min\Biggl(\sqrt{\frac{2}{\coeffEn\, \eigDmax}},\sqrt{\frac{\eigVmin(m)}{4\, \eigDmax}}\Biggr)
\,.
\end{equation}
Let us introduce the weight function $\psi$ defined as 
\begin{equation}
\label{def_psi}
\psi(x) = \exp(-\kappa\abs{x})
\,.
\end{equation}
For $\widebar{x}$ in $\rr$, let $T_{\widebar{x}}\psi$ denote the translate of $\psi$ by $\widebar{x}$, that is the function defined as
\[
T_{\widebar{x}}\psi(x) = \psi (x-\widebar{x})
\,,
\]
see \cref{fig:weight_fire}. 
\begin{figure}[!htbp]
\centering
\includegraphics[width=\textwidth]{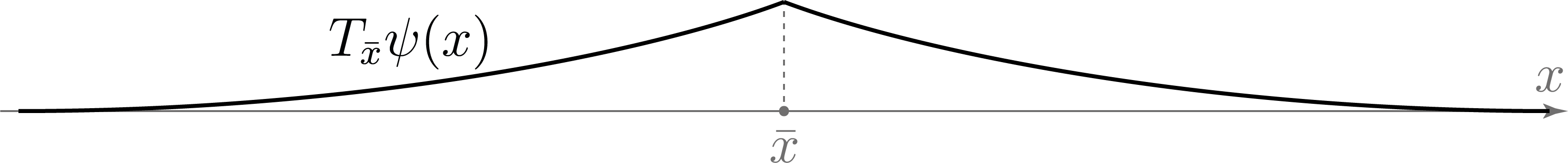}
\caption{Graph of the weight function $x\mapsto T_{\bar{x}}\psi(x)$ used to define the firewall function $\fff(\bar{x},t)$. The slope is small, according to the definition of $\kappa$.}
\label{fig:weight_fire}
\end{figure}
For $x$ in $\rr$ and $t$ in $[0,+\infty)$, let 
\[
E^\dag(x,t) = \frac{1}{2}\abs{u^\dag_x(x,t)}_{\ddd}^2 + V^\dag \bigl(u^\dag(x,t)\bigr)
\quad\text{and}\quad
F^\dag(x,t) = \coeffEn E^\dag(x,t) + \frac{1}{2}u^\dag(x,t)^2
\,,
\]
and for $\widebar{x}$ in $\rr$ and $t$ in $[0,+\infty)$, let 
\begin{equation}
\label{def_fff}
\fff(\widebar{x},t) = \int_{\rr} T_{\widebar{x}}\psi(x)\, F^\dag(x,t)\, dx
\,.
\end{equation}
\subsubsection{Coercivity} 
\begin{lemma}[coercivity of $F^\dag(x,t)$]
\label{lem:coerc_fire}
For all $t$ in $[0,+\infty)$ and $x$ in $\rr$, 
\begin{align}
\label{coerc_fire}
F^\dag(x,t) &\ge \min\Bigl(\frac{\coeffEn}{2},\frac{1}{4}\Bigr) \left( \abs{u^\dag_x(x,t)}_{\ddd} ^2 + u^\dag(x,t)^2 \right)
\,,\\
\text{and}\quad
\label{coerc_fire_hone}
F^\dag(x,t) &\ge \min\Bigl(\frac{\coeffEn\eigDmin}{2},\frac{1}{4}\Bigr) \left( \abs{u^\dag_x(x,t)}^2 + u^\dag(x,t)^2 \right)
\,,\\
\text{thus in particular}&\quad
\label{coerc_fire_nonnegative}
F^\dag(x,t)\ge 0\,,
\quad\text{and as a consequence}\quad 
\fff(x,t)\ge 0
\,.
\end{align}
\end{lemma}
\begin{proof}
Both inequalities \cref{coerc_fire,coerc_fire_hone} follow from inequality \cref{def_weight_en_V_dag}.
\end{proof}
The function $\fff(\bar x,t)$ will play the role of a ``firewall'', in the sense that its approximate decrease will enable to control the solution in the part of space where it is not too far from the minimum point $0_{\rr^d}$ (and consequently to control the flux term in the derivative of the localized energy in the next \namecref{sec:spat_asympt}). The notation $\fff$ relates to this interpretation. This approximate decrease is formalized by the next lemma. 
\subsubsection{Linear decrease up to pollution} 
For $t$ in $[0,+\infty)$, let us introduce the set (the domain of space where the normalized solution $u^\dag$ ``Escapes'' at a certain distance from $0_{\rr^d}$):
\begin{equation}
\label{def_SigmaEsc}
\SigmaEsc(t)=\left\{x\in\rr: \abs{u^\dag(x,t)}_{\ddd} >\dEsc(m)\right\} 
\,.
\end{equation}
\begin{lemma}[firewall linear decrease up to pollution]
\label{lem:approx_decrease_fire}
There exist positive quantities $\nuF$ and $\KF$ such that, for all $\widebar{x}$ in $\rr$ and all $t$ in $[0,+\infty)$,
\begin{equation}
\label{dt_fire}
\partial_t \fff(\widebar{x},t)\le-\nuF\, \fff(\widebar{x},t) + \KF\, \int_{\SigmaEsc(t)} T_{\widebar{x}}\psi(x)\, dx
\,.
\end{equation}
The quantity $\nuF$ depends on $V$ and $\ddd$ and $m$ (only), whereas $\KF$ depends additionally on the upper bound on the $L^{\infty}$-norm of the solution.
\end{lemma}
\begin{proof}
It follows from expressions \vref{ddt_loc_en,ddt_loc_L2} that, for all $\widebar{x}$ in $\rr$ and all $t$ in $[0,+\infty)$,
\[
\begin{aligned}
\partial_t &\fff (\widebar{x},t) = \\
& \int_{\rr} \biggl[ 
T_{\widebar{x}} \psi \Bigl( - \coeffEn\, (u_t^\dag)^2 - u^\dag\cdot \nabla V^\dag(u^\dag) -  \abs{u^\dag_x}_{\ddd} ^2 \Bigr) 
- T_{\widebar{x}} \psi' \bigl( \coeffEn\, \ddd u^\dag_x\cdot u^\dag_t \bigr) 
+ \frac{T_{\widebar{x}} \psi''}{2}\abs{u^\dag}_{\ddd} ^2 
\biggr] \, dx
\,.
\end{aligned}
\]
Since 
\[
\abs{\psi'(\cdot)}= \kappa \, \psi(\cdot)
\quad\text{and}\quad
\psi''(\cdot)\le \kappa^2 \psi(\cdot)
\]
(indeed $\psi''(\cdot)$ equals $\kappa^2 \psi(\cdot)$ plus a Dirac mass of negative weight), it follows that
\[
\partial_t\fff (\widebar{x},t) \le \int_{\rr} T_{\widebar{x}} \psi \Bigl( - \coeffEn\, (u_t^\dag)^2 - u^\dag\cdot \nabla V^\dag(u^\dag) -  \abs{u^\dag_x}_{\ddd} ^2 
+ \coeffEn\, \kappa \abs{\ddd u^\dag_x\cdot u^\dag_t} + \frac{\kappa^2}{2} \abs{u^\dag}_{\ddd}^2\Bigr) \, dx 
\,,
\]
thus, using the inequalities
\begin{equation}
\label{polarize_kappa_ddd_ux_ut}
\begin{aligned}
\kappa\abs{\ddd u^\dag_x\cdot u^\dag_t} &\le \kappa \abs{\sqrt{\ddd}(\sqrt{\ddd}u^\dag_x)\cdot u^\dag_t} \\
&\le \kappa \sqrt{\eigDmax} \abs{u^\dag_x}_{\ddd}\abs{u^\dag_t} \\
&\le  (u^\dag_t)^2 + \frac{\kappa^2\,\eigDmax}{4}\abs{u^\dag_x}_{\ddd}^2
\,,
\end{aligned}
\end{equation}
it follows that
\begin{equation}
\label{dt_fire_spat_as_before_applying_conditions_on_parameters}
\partial_t\fff (\widebar{x},t)\le
\int_{\rr} T_{\widebar{x}} \psi \biggl(\Bigl(\frac{\coeffEn \, \kappa^2\, \eigDmax}{4}-1\Bigr)\abs{u^\dag_x}_{\ddd} ^2 - u^\dag\cdot \nabla V^\dag(u^\dag) + \frac{\kappa^2 \, \eigDmax}{2} (u^\dag)^2 \biggr)\, dx 
\,,
\end{equation}
and according to the conditions \cref{conditions_on_kappa} satisfied by the quantity $\kappa$,
\begin{equation}
\label{dt_fire_spat_as}
\partial_t\fff(\widebar{x},t)\le
\int_{\rr} T_{\widebar{x}} \psi \Bigl(-\frac{1}{2}\abs{u^\dag_x}_{\ddd}^2 - u^\dag\cdot \nabla V^\dag(u^\dag) + \frac{\eigVmin(m)}{8} (u^\dag)^2 \Bigr)\, dx 
\,. 
\end{equation}
Let $\nuF$ be a positive quantity to be chosen below. It follows from the previous inequality and from the definition \cref{def_fff} of $\fff(\widebar{x},t)$ that
\begin{equation}
\label{dt_Fire_preliminary}
\begin{aligned}
\partial_t\fff(\widebar{x},t) + \nuF\fff(\widebar{x},t) \le \int_{\rr} T_{\widebar{x}} \psi \biggl[&-\frac{1}{2}(1-\nuF\,\coeffEn)\abs{u^\dag_x}_{\ddd}^2 - u^\dag\cdot \nabla V^\dag(u^\dag) \\
&  + \nuF\,\coeffEn\, V^\dag(u^\dag) +   \Bigl(\frac{\eigVmin(m)}{8} + \frac{\nuF}{2}\Bigr)(u^\dag)^2 \biggr]\, dx 
\,.
\end{aligned}
\end{equation}
In view of this expression and of inequalities \cref{v_nablaV_controls_square_around_loc_min_dag,v_nablaV_controls_pot_around_loc_min_dag}, let us assume that $\nuF$ is small enough so that
\begin{equation}
\label{conditions_on_nuFire}
\nuF\, \coeffEn \le 1 
\quad\text{and}\quad
\nuF\, \coeffEn \le \frac{1}{2}
\quad\text{and}\quad
\frac{\nuF}{2} \le \frac{\eigVmin(m)}{8} 
\,;
\end{equation}
the quantity $\nuF$ may be chosen as
\begin{equation}
\label{def_nuF}
\nuF = \min\Bigl(\frac{1}{2\coeffEn}, \frac{\eigVmin(m)}{4} \Bigr)
\,.
\end{equation}
Then, it follows from \cref{dt_Fire_preliminary,conditions_on_nuFire} that
\begin{equation}
\label{dt_Fire_preliminary_bis}
\partial_t\fff(\widebar{x},t) + \nuF\fff(\widebar{x},t) \le \int_{\rr} T_{\widebar{x}} \psi \Bigl[- u^\dag\cdot \nabla V^\dag(u^\dag) +\frac{1}{2}\abs{V^\dag(u^\dag)} +\frac{\eigVmin(m)}{4} (u^\dag)^2 \Bigr] \, dx 
\,.
\end{equation}
According to \cref{v_nablaV_controls_square_around_loc_min_dag,v_nablaV_controls_pot_around_loc_min_dag}, the integrand of the integral at the right-hand side of this inequality is nonpositive as long as $x$ is \emph{not} in $\SigmaEsc(t)$. Therefore this inequality still holds if the domain of integration of this integral is changed from $\rr$ to $\SigmaEsc(t)$. Besides, observe that, in terms of the ``initial'' potential $V$ and solution $u(x,t)$, the factor of $T_{\widebar{x}} \psi$ under the integral of the right-hand side of this last inequality reads
\[
- (u-m)\cdot \nabla V(u)+\frac{1}{2} \abs{V(u)-V(m)}+ \frac{\eigVmin(m)}{4} (u-m)^2
\,.
\]
Thus, if $K_{\fff}$ denotes the maximum of this expression over all possible values for $u$, that is (according to the $L^\infty$ bound \cref{maximal_radius_excursion_Linfty} on the solution) the quantity
\begin{equation}
\label{def_KF}
K_{\fff} = \max_{v\in\rr^d,\ \abs{v}\le \RmaxInfty[u_0]}\Bigl[ - (v-m)\cdot \nabla V(v)+ \frac{1}{2} \abs{V(v)-V(m)} + \frac{\eigVmin(m)}{4} (v-m)^2\Bigr]
\,,
\end{equation}
then inequality~\cref{dt_fire} follows from inequality \cref{dt_Fire_preliminary_bis} (with the domain of integration of the integral on the right-hand side restricted to $\SigmaEsc(t)$). This finishes the proof of \cref{lem:approx_decrease_fire}.
\end{proof}
\begin{remark}
By changing the definitions of the various quantities introduced from the beginning of \cref{sec:spat_asympt}, it would be possible to prove inequality \cref{dt_fire} of \cref{lem:approx_decrease_fire} with the quantity $\nuF$ replaced by a quantity hardly less than $\eigVmin(m)$. This would require to choose accordingly the following quantities: 
\begin{itemize}
\item $\dEsc(m)$ small enough so that the quantity $\eigVmin(v)$ remains hardly less than $\eigVmin(m)$ for every $v$ such that $\abs{v-m}_{\ddd}$ is not larger than $\dEsc(m)$, 
\item and $\coeffEn$ small enough so that the first two conditions of \cref{conditions_on_nuFire} be automatically satisfied as soon as $\nuF$ is less than or equal to $\eigVmin(m)$,
\item and $\kappa$ small enough so that the factor $\kappa^2 \eigDmax/2$ of $(u^\dag)^2$ in inequality \cref{dt_fire_spat_as_before_applying_conditions_on_parameters} be much smaller than $\eigVmin(m)$. 
\end{itemize}
However, this attempt to reach the ``best possible value'' for the quantity $\nuF$ would not provide any tangible benefit in what follows.
\end{remark}
\subsubsection{Control of the distance to the minimum point}
\label{subsubsec:cont_fire}
\begin{lemma}[upper bound on the distance to $m$ with the square root of the firewall]
\label{lem:dist_to_m_square_root_firewall}
For every real quantity $x$ and every nonnegative time $t$, 
\begin{equation}
\label{dist_to_m_square_root_firewall}
\abs{u^\dag(x,t)}_{\ddd}\le \sqrt{\frac{\max\left(\frac{1+\kappa\eigDmax}{2},\frac{\eigDmax}{2}\right)}{\min\left(\frac{\coeffEn}{2},\frac{1}{4}\right)}} \sqrt{\fff(x,t)}
\,.
\end{equation}
\end{lemma}
\begin{proof}
Let $v$ be a function in $X$. Then,
\[
\begin{aligned}
\abs{v(0)}_{\ddd}^2 & = \psi(0)\abs{v(0)}_{\ddd}^2 \\
& \le \frac{1}{2} \int_{\rr} \abs{\frac{d}{dx}\bigl( \psi(x) \abs{v(x)}_{\ddd}^2 \bigr)}\, dx \\
& \le \frac{1}{2} \int_{\rr} \Bigl( \abs{\psi'(x)} \abs{v(x)}_{\ddd}^2 + 2 \psi(x)\, \abs{v(x)\cdot \ddd v'(x)} \Bigr) \, dx \\
& \le \frac{1}{2} \int_{\rr} \psi(x) \Bigl( (1+\kappa\eigDmax) v(x)^2 + \eigDmax\abs{v'(x)}_{\ddd}^2\Bigr) \, dx \\
& \le \max\left(\frac{1+\kappa\eigDmax}{2},\frac{\eigDmax}{2}\right)\int_{\rr} \psi(x) \bigl(\abs{v'(x)}_{\ddd}^2 + v(x)^2\bigr) \, dx
\,,
\end{aligned}
\]
and inequality \cref{dist_to_m_square_root_firewall} thus follows from the coercivity property \cref{coerc_fire} of $\fff$. 
\end{proof}
\begin{definition}[escape distance]
Let us call \emph{escape distance of $m$}, and let us denote by $\desc(m)$ the quantity
\begin{equation}
\label{def_r_esc_fire}
\desc(m) = \dEsc(m) \sqrt{\frac{\min\left(\frac{\coeffEn}{2},\frac{1}{4}\right)}{\max\left(\frac{1+\kappa\eigDmax}{2},\frac{\eigDmax}{2}\right)}}
\,.
\end{equation}
As for the quantity $\dEsc(m)$, this quantity $\desc(m)$ depends on $V$ and $\ddd$ and $m$ (only). 
\end{definition}
The next corollary follows from \cref{lem:dist_to_m_square_root_firewall} and from the definition \cref{def_r_esc_fire} above of $\desc(m)$.
\begin{corollary}[escape/Escape]
\label{cor:esc_Esc}
For every $\widebar{x}$ in $\rr$ and every nonnegative time $t$, the following assertion holds:
\begin{equation}
\label{ineq_esc_Esc}
\fff(\widebar{x},t) \le \desc(m)^2
\implies
\abs{u^\dag(\widebar{x},t)}_{\ddd} \le \dEsc(m)
\,.
\end{equation}
\end{corollary}
\subsubsection{Control of the solution at one end of space}
\label{subsec:speed_inv}
The next three definitions ensure the validity of \cref{lem:F_under_hullnoesc} below.
\begin{itemize}
\item Let $L$ be a positive quantity, large enough so that
\begin{equation}
\label{def_L_h_noesc}
\KF\,\frac{\exp\bigl(-\kappa\, L\bigr)}{\kappa}\le \nuF \frac{\desc(m)^2}{8}
\,;
\end{equation}
the quantity $L$ may be chosen as
\begin{equation}
\label{def_L}
L = \frac{1}{\kappa}\log\Bigl(\frac{8 \,\KF}{\nuF\, \desc(m)^2\, \kappa}\Bigr)
\,.
\end{equation}
\item Let $\hullnoesc:\rr\to\rr\cup\{+\infty\}$ (``no-escape hull'') be the function defined as
\begin{equation}
\label{def_h_noesc}
\hullnoesc(\xi) = 
\left\{
\begin{aligned}
& +\infty & \quad\text{for}\quad & \xi<0 \,, \\
& \frac{\desc(m)^2}{2}\Bigl(1-\frac{\xi}{2\, L}\Bigr) & \quad\text{for}\quad & 0\le \xi\le L \,, \\
& \frac{\desc(m)^2}{4} & \quad\text{for}\quad & \xi\ge L \,,
\end{aligned}
\right.
\end{equation}
see \cref{fig:graph_hull}.
\begin{figure}[!htbp]
\centering
\includegraphics[width=\textwidth]{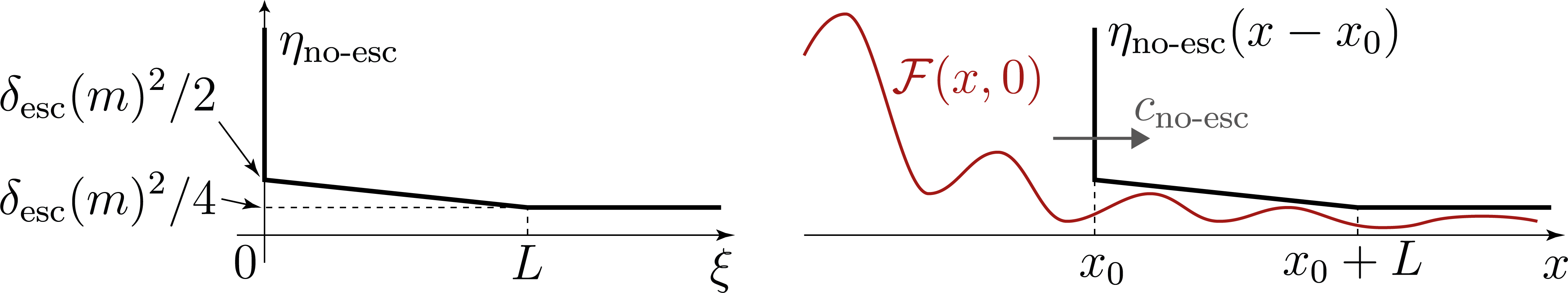}
\caption{Left: graph of the hull function $\hullnoesc$. Right: illustration of \cref{lem:F_under_hullnoesc}; if the firewall function is below a translate of the hull at time $t=0$ and if the hull travels to the right at the speed $\cnoesc$, then the firewall function will remain below the travelling hull in the future.}
\label{fig:graph_hull}
\end{figure}
\item Let $\cnoesc$ (``no-escape speed'') denote a positive quantity, large enough so that
\begin{equation}
\label{condition_on_cnoesc}
\frac{\cnoesc\,\desc(m)^2}{4\, L} \ge \frac{2\,\KF}{\kappa}
\,; 
\end{equation}
the quantity $\cnoesc$ may be chosen as
\begin{equation}
\label{def_cnoesc}
\cnoesc = \frac{8\,\KF\, L}{\kappa \, \desc(m)^2}
\,.
\end{equation}
\end{itemize}
Note that the quantities $L$ and $\cnoesc$ and the hull function $\hullnoesc$ all depend on $V$ and $\ddd$ and $m$ \emph{and} on $\norm{u_0}_X$. 
% (although the notation for the hull function does not make it apparent; note that this dependence was unclear in the author's previous paper \cite{Risler_globCVTravFronts_2008}, if not in his mind \Innocey). 

The following lemma states that if the firewall function is bounded from above by a translate of the no-escape hull at a certain time (to simplify the presentation, at the time $t=0$), then in the future it remains bounded from above by translates of this hull travelling at the no-escape speed (see \cref{fig:graph_hull}). 
\begin{lemma}[firewall remaining below travelling hull]
\label{lem:F_under_hullnoesc}
For every $x_0$ in $\rr$, if
\begin{equation}
\label{hyp_lem_F_under_hullnoesc}
\fff(x,0) \le \hullnoesc(x-x_0) 
\quad\text{for all } x \text{ in }\rr
\,,
\end{equation}
then, for every nonnegative time $t$, 
\begin{equation}
\label{F_under_hullnoesc}
\fff(x,t) \le \hullnoesc(x-x_0-\cnoesc\, t)
\quad\text{for all } x \text{ in }\rr
\,.
\end{equation}
\end{lemma}
\begin{proof}
\renewcommand{\qedsymbol}{}
Let us introduce the function
\[
\Delta: \rr\times[0,+\infty)\to\rr\cup\{+\infty\},\quad (x,t)\mapsto  \fff(x,t) - \hullnoesc(x-x_0-\cnoesc\, t)
\,
\]
and the domains
\[
\begin{aligned}
D_1 &=\bigl\{(x,t)\in\rr\times[0,+\infty): x<x_0+\cnoesc\, t\bigr\}\,, \\
\text{and}\qquad
D_2 &=\bigl\{(x,t)\in\rr\times[0,+\infty): x_0+\cnoesc\, t\le x < x_0+L+\cnoesc\, t\bigr\}\,, \\
\text{and}\qquad
\partial D_{2,3} &=\bigl\{(x,t)\in\rr\times[0,+\infty): x = x_0+L+\cnoesc\, t\bigr\}\,, \\
\text{and}\qquad
D_3 &=\bigl\{(x,t)\in\rr\times[0,+\infty): x_0+L+\cnoesc\, t<x\bigr\}
\,,
\end{aligned}
\]
see \cref{fig:areas_behav_proof_F_remains_below_etaNoesc}. 
\begin{figure}[!htbp]
\centering
\includegraphics[width=.8\textwidth]{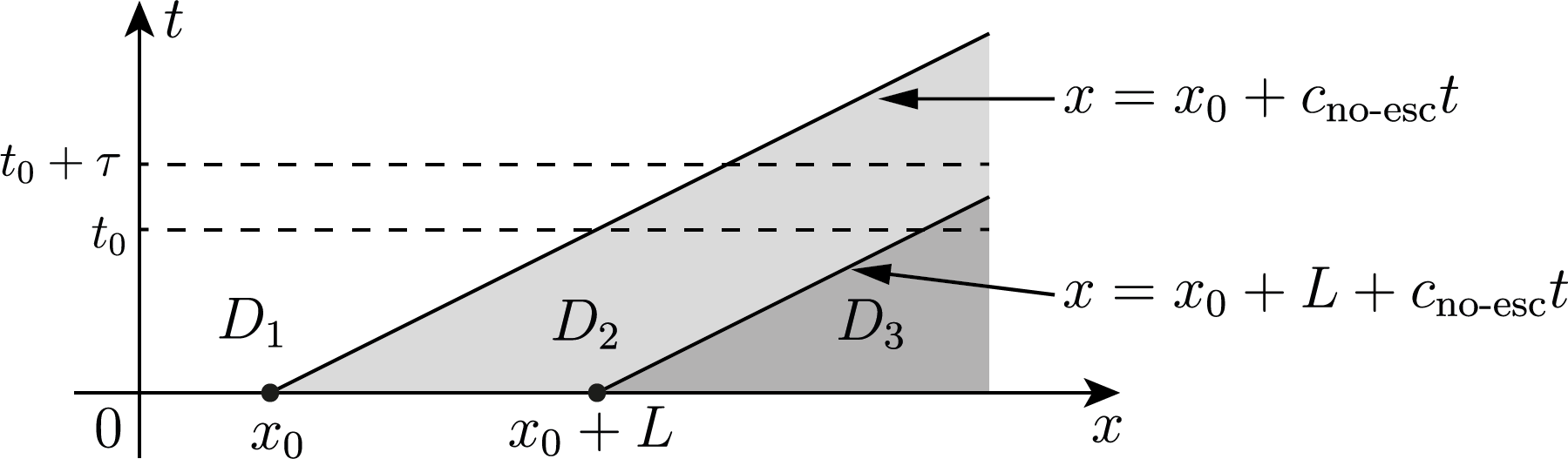}
\caption{Domains $D_1$ and $D_2$ and $D_3$.}
\label{fig:areas_behav_proof_F_remains_below_etaNoesc}
\end{figure}
Proving inequality \cref{F_under_hullnoesc} amounts to prove that $\Delta(x,t)$ is nonpositive for all $(x,t)$ in the domain
\[
\rr\times[0,+\infty) = D_1 \sqcup D_2 \sqcup \partial D_{2,3} \sqcup D_3
\,.
\]
The following observations can be made concerning the function $\Delta$:
\begin{itemize}
\item it is identically equal to $-\infty$ on $D_1$;
\item according to assumption \cref{hyp_lem_F_under_hullnoesc} it is nonpositive on $\{0\}\times\rr$;
\item it is continuous on $D_2 \sqcup \partial D_{2,3} \sqcup D_3$;
\item its partial derivative $\partial_t\Delta$ is defined on $D_2 \sqcup D_3$;
\item for every $(x,t)$ in $\rr\times[0,+\infty)$, according to inequalities \cref{coerc_fire_nonnegative,dt_fire}, 
\begin{equation}
\label{upper_bound_partial_t_F_by_quantity}
\partial_t \fff(x,t)\le \frac{2\,\KF}{\kappa}
\,,
\end{equation}
and for every $(x,t)$ in $D_2$, according to the definition \cref{def_h_noesc} of $\hullnoesc$, 
\[
\partial_t \bigl(\hullnoesc(x-x_0-\cnoesc\, t)\bigr) = \frac{\cnoesc\,\desc(m)^2}{4\,L}
\,;
\]
thus it follows from the condition \cref{condition_on_cnoesc} on $\cnoesc$ that, for every $(x,t)$ in $D_2$, 
\begin{equation}
\label{partial_t_nonpositive_in_Dtwo}
\partial_t\Delta(x,t) \le 0
\,.
\end{equation}
\item for every $(x,t)$ in $D_3$, according to inequality \cref{dt_fire}, 
\begin{equation}
\label{upper_bound_partial_t_Delta}
\partial_t\Delta(x,t)\le -\nuF\, \fff(x,t) + \KF\, \int_{\SigmaEsc(t)} T_{x}\psi(y)\, dy
\,.
\end{equation}
\end{itemize}
Let us proceed by contradiction and assume that the set
\[
\bigl\{t\in[0,+\infty): \text{ there exists $x$ in $\rr$ such that $\Delta(x,t)>0$}\bigr\} 
\]
is nonempty. The infimum of this set is a nonnegative quantity. Let us denote by $t_0$ the infimum, and let $\tau$ denote a positive quantity, small enough so that
\[
\tau\, \frac{2\,\KF}{\kappa}\le \frac{\desc(m)^2}{2}
\,;
\]
the quantity $\tau$ may be chosen as
\[
\tau = \frac{\kappa \, \desc(m)^2}{4\, \KF}
\,.
\]
The following lemma conflicts the definition of $t_0$ (and thus completes the proof).
\end{proof}
\begin{lemma}[$\Delta(\cdot,\cdot)$ cannot reach any positive value on $\rr\times{[t_0,t_0+\tau]}$] % braces around brackets or else it does not work. 
\label{lem:Delta_cannot_reach_positive_values_short_term}
For every $x$ in $\rr$, 
\[
\Delta(x,t_0)\le 0
\,,
\]
and for every $(x,t)$ in $\rr\times[t_0,t_0+\tau]$, 
\[
\left(\Delta(x,t)\ge -\frac{\desc(m)^2}{8} 
\quad\text{and}\quad (x,t)\not\in\partial D_{2,3}\right)
\implies
\partial_t \Delta (x,t)\le 0
\,.
\]
\end{lemma}
\begin{proof}
Since the function $\Delta$ is continuous on $D_2 \sqcup \partial D_{2,3} \sqcup D_3$, it must be nonpositive on $\rr\times[0,t_0]$ (thus in particular on $\rr\times\{t_0\}$), or else there would be a contradiction with the definition of $t_0$. This proves the first assertion. 
 
For every $x$ greater than or equal to $x_0+\cnoesc t_0$, 
\[
\Delta(x,t_0)\le 0 
\quad\text{thus}\quad
\fff(x,t_0)\le \frac{\desc(m)^2}{2}
\,,
\]
thus, according to inequality \cref{upper_bound_partial_t_F_by_quantity}, for every $t$ in $[t_0,t_0+\tau]$,
\[
\fff(x,t)\le \frac{\desc(m)^2}{2} + \tau \frac{2\,\KF}{\kappa} \le \desc(m)^2
\,,
\]
and as a consequence, according to inequality \cref{ineq_esc_Esc} of \cref{cor:esc_Esc}, 
\[
\abs{u^\dag(\widebar{x},t)}_{\ddd} \le \dEsc(m)
\,;
\]
in other words, for every $t$ in $[t_0,t_0+\tau]$, 
\begin{equation}
\label{SigmaEsc_not_larger_than_xZero_plus_cnoesc_tZero}
\SigmaEsc(t) \subset (-\infty,x_0+\cnoesc t_0]
\,.
\end{equation}
Take $(x,t)$ in $\rr\times[t_0,t_0+\tau]\setminus\partial D_{2,3}$. 
\begin{itemize}
\item If $(x,t)$ is in $D_1$ then $\Delta(x,t)$ equals $-\infty$, 
\item and if $(x,t)$ is in $D_2$, then the conclusion follows from \cref{partial_t_nonpositive_in_Dtwo}. 
\item The remaining case is when $(x,t)$ is in $D_3$, and in this case $x$ is greater than or equal to $x_0+\cnoesc t_0$. Then, according to \cref{SigmaEsc_not_larger_than_xZero_plus_cnoesc_tZero} and to inequality \cref{upper_bound_partial_t_Delta},
\[
\partial_t\Delta(x,t)\le -\nuF\, \fff(x,t) + \KF\, \frac{\exp\bigl(-\kappa\, L\bigr)}{\kappa}
\,,
\]
and as a consequence, according to the condition \cref{def_L_h_noesc} on $L$, 
\[
\partial_t\Delta(x,t)\le -\nuF\Bigl(\fff(x,t) - \frac{\desc(m)^2}{8}\Bigr) = -\nuF\Bigl(\Delta(x,t) + \frac{\desc(m)^2}{8}\Bigr)
\,,
\]
and the conclusion of the lemma follows. 
\end{itemize}
\Cref{lem:Delta_cannot_reach_positive_values_short_term} is proved.
\end{proof}
\begin{proof}[End of the proof of \cref{lem:F_under_hullnoesc}]
It follows from \cref{lem:Delta_cannot_reach_positive_values_short_term} and from the continuity of $\Delta(\cdot,\cdot)$ on $D_2 \sqcup \partial D_{2,3} \sqcup D_3$ that $\Delta(x,t)$ remains nonpositive on $\rr\times[t_0,t_0+\tau]$, a contradiction with the definition of $t_0$.
\Cref{lem:F_under_hullnoesc} is proved. 
\end{proof}
\subsubsection{Exponential decrease, first statement}
Let $c_1$ and $c_2$ denote two positive quantity with $c_1$ smaller than $c_2$. The following lemma is an intermediary result which will be called upon three times: in the proof of \cref{lem:exp_decrease_beyond_invasion_speed_uniform_version} in the next \cref{subsubsec:exp_decrease_beyond_invasion_speed_uniform_version}, in the proof of \cref{lem:sufficient_condition_stability_right_end_of_space} in \cref{subsubsec:proof_of_lem_sufficient_condition_stability_right_end_of_space}, and in the proof of \cref{lem:exponential_decrease_beyond_invasion_speed} in \cref{subsubsec:proof_of_lem_exponential_decrease_beyond_invasion_speed}. 
\begin{lemma}[exponential decrease of firewalls, first statement]
\label{lem:exponential_decrease_firewall}
Assume that there exist a real quantity $x_0$ and a nonnegative quantity $t_0$ such that, for every $t$ in $[t_0,+\infty)$, 
\begin{equation}
\label{hyp_Sigma_Esc_in_domain_growing_at_speed_cone}
\SigmaEsc(t)\subset\bigl(-\infty,x_0+c_1 (t-t_0)\bigr]
\,,
\end{equation}
and let us introduce the quantities $\nuF'$ and $\KF'$ defined as
\begin{equation}
\label{def_nu_and_K_exp_decrease_firewall}
\nuF' = \min\left(\nuF,\frac{\kappa(c_2-c_1)}{2}\right)
\quad\text{and}\quad
\KF' = \sup_{x\ge x_0}\fff(x,t_0)+\frac{2\KF}{\kappa^2(c_2-c_1)}
\,.
\end{equation}
Then, for every $t$ in $[t_0,+\infty)$, the following inequality holds:
\begin{equation}
\label{exponential_decrease_firewall}
\sup_{x\ge x_0 + c_2(t-t_0)}\fff(x,t)\le \KF' \exp\bigl(-\nuF' (t-t_0)\bigr)
\,.
\end{equation}
\end{lemma}
\begin{proof}
According to inclusion \cref{hyp_Sigma_Esc_in_domain_growing_at_speed_cone} and to inequality \cref{dt_fire} of \cref{lem:approx_decrease_fire}, for all $t$ greater than or equal to $t_0$ and for all $x$ in $\rr$, 
\[
\begin{aligned}
\partial_t \fff(x,t) + \nuF \fff(x,t) &\le \KF \int_{-\infty}^{x_0+c_1 (t-t_0)} \exp\bigl(-\kappa(x-y)\bigr)\, dy  \\
&\le \KF \exp(-\kappa x)\frac{1}{\kappa}\exp\Bigl(\kappa \bigl(x_0+c_1 (t-t_0\bigr)\Bigr)
\,,
\end{aligned}
\]
so that, if in addition $x$ is assumed to be greater than or equal to $x_0+c_2 (t-t_0)$, then 
\begin{equation}
\label{upper_bound_partial_t_F_plus_nu_F}
\partial_t \fff(x,t) + \nuF \fff(x,t) \le \frac{\KF}{\kappa} \exp\bigl(-\kappa(c_2-c_1)(t-t_0)\bigr)
\,.
\end{equation}
For every real quantity $x$ greater than or equal to $x_0$, and for every time $t$ in the interval $[t_0,t_0 + (x-x_0)/c_2]$ (see \cref{fig:time_interval_of_integration}), it thus follows from Grönwall's inequality that
\begin{figure}[!htbp]
\centering
\includegraphics[width=.85\textwidth]{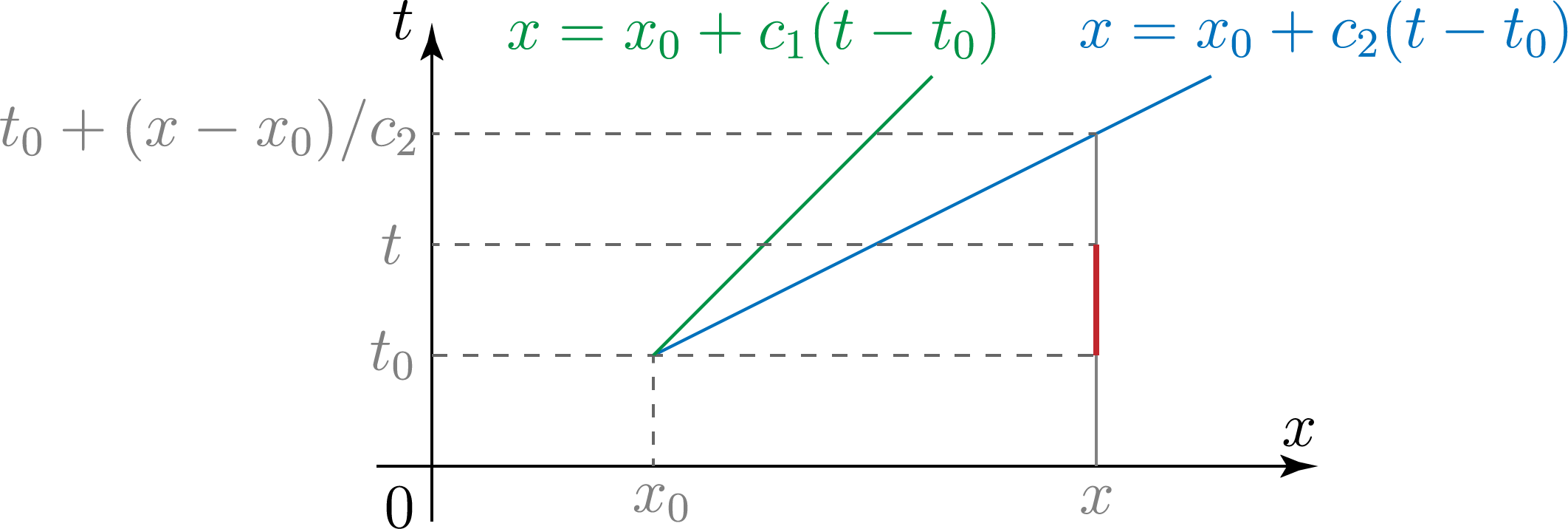}
\caption{Illustration of \cref{lem:exponential_decrease_firewall}.}
\label{fig:time_interval_of_integration}
\end{figure}
\[
\begin{aligned}
&\fff(x,t) \\
\quad&\le \fff(x,t_0)\exp\bigl(-\nuF(t-t_0)\bigr) + \frac{\KF}{\kappa}\int_{t_0}^t \exp\bigl(-\nuF(t-s)\bigr)\exp\bigl(-\kappa(c_2-c_1)(s-t_0)\bigr)\, ds \\
\quad&\le \exp\bigl(-\nuF'(t-t_0)\bigr)\left(\fff(x,t_0) + \frac{\KF}{\kappa}\int_{t_0}^t \exp\bigl(-\kappa(c_2-c_1)(s-t_0)/2\bigr)\, ds \right)
\,,
\end{aligned}
\]
and inequality \cref{exponential_decrease_firewall} follows. \Cref{lem:exponential_decrease_firewall} is proved.
\end{proof}
\subsubsection{Exponential decrease, second statement}
\label{subsubsec:exp_decrease_beyond_invasion_speed_uniform_version}
The aim of this \namecref{subsubsec:exp_decrease_beyond_invasion_speed_uniform_version} is to prove \cref{lem:exp_decrease_beyond_invasion_speed_uniform_version} below, which follows from \cref{lem:F_under_hullnoesc,lem:exponential_decrease_firewall} and concerns again the exponential decrease of the firewall functions. This second statement will be required to prove the upper semi-continuity of the asymptotic energy (\cref{sec:upp_semicont_asympt_en}), and to a lesser extent to prove \cref{lem:upper_bound_on_invasion_speed} (\cref{subsec:proof_of_preliminary_results_stab_infinity}). Stating \cref{lem:exp_decrease_beyond_invasion_speed_uniform_version} requires the following notation. 
\begin{notation}
Let us denote
\begin{itemize}
\item by $\KFatt$ the quantity $\KF$ defined in \cref{def_KF}, with $\RmaxInfty[u_0]$ replaced with $\RattInfty$, 
\item and by $\Latt$ the quantity $L$ defined in \cref{def_L}, with $\KF$ replaced with $\KFatt$, 
\item and by $\hullnoescatt$ the function $\hullnoesc$ defined in \cref{def_h_noesc}, with $L$ replaced with $\Latt$, 
\item and by $\cnoescatt$ the quantity $\cnoesc$ defined in \cref{def_cnoesc}, with $\KF$ replaced with $\KFatt$ and $L$ replaced with $\Latt$.
\end{itemize}
\end{notation}
By contrast with $\KF$ and $L$ and $\cnoesc$, these quantities depend only on $V$ and $\ddd$ and $m$, and not on the solution $u$ under consideration (indeed, by contrast with the maximal radius $\RmaxInfty[u_0]$, the radius $\RattInfty$ of the attracting ball for the $L^\infty$-norm depends only on $V$ and $\ddd$, and not on $u$, see \cref{prop:glob_exist_sol_att_ball}). 
\begin{lemma}[exponential decrease of firewalls, second statement]
\label{lem:exp_decrease_beyond_invasion_speed_uniform_version}
For every positive quantity $\delta c$, there exist positive quantities $\nuF''$ and $\KF''$, depending only on $V$ and $\ddd$ and $m$ and $\delta c$, such that, if there exists $(x_0,t_0)$ in $\rr\times\bigl[\Tatt[u_0],+\infty\bigr)$ such that 
\begin{equation}
\label{hyp_lem_exp_decrease_firwall}
\sup_{x\ge x_0} \ \fff(x,t_0) \le \frac{\desc(m)^2}{4}
\,,
\end{equation}
then, for all $t$ in $[t_0,+\infty)$, 
\begin{equation}
\label{conclusion_lem_exponential_decrease_firewall_uniform}
\sup_{x\ge x_0+(\cnoescatt + \delta c)(t-t_0)}
\fff(x,t)\le \KF''\, \exp\bigl(-\nuF'' (t-t_0)\bigr)
\,.
\end{equation}
\end{lemma}
\begin{proof}
Assume that there exist $(x_0,t_0)$ in $\rr\times\bigl[\Tatt[u_0],+\infty\bigr)$ such that assumption \cref{hyp_lem_exp_decrease_firwall} hold. Then it follows from this assumption and from the definition \cref{def_h_noesc} of $\hullnoesc$ that, for every $x$ in $\rr$, 
\[
\fff(x,t_0)\le \hullnoescatt(x-x_0)
\,.
\]
Thus, it follows from \cref{lem:F_under_hullnoesc} that, for every time $t$ greater than or equal to $t_0$ and for every real quantity $x$, 
\[
\fff(x,t) \le \hullnoescatt\bigl(x-x_0-\cnoescatt(t-t_0)\bigr)
\,,
\]
so that 
\[
\sup_{x\ge x_0+\cnoescatt(t-t_0)}\fff(x,t) \le \frac{\desc(m)^2}{2}
\,.
\]
According to \cref{cor:esc_Esc}, it follows that
\[
\SigmaEsc(t)\subset\bigl(-\infty,x_0+\cnoescatt(t-t_0)\bigr]
\,.
\]
Thus assumption \cref{hyp_Sigma_Esc_in_domain_growing_at_speed_cone} of \cref{lem:exponential_decrease_firewall} is fulfilled with $c_1$ equal to $\cnoescatt$. According to the conclusions of this lemma with $c_2$ equal to $\cnoescatt+\delta c$, introducing the quantities $\nuF''$ and $\KF''$ defined as
\begin{equation}
\label{def_nuF_second_KF_second}
\nuF'' = \min\left(\nuF,\frac{\kappa\, \delta c}{2}\right)
\quad\text{and}\quad
\KF'' = \frac{\desc(m)^2}{4}+\frac{2\KFatt}{\kappa^2\, \delta c}
\,,
\end{equation}
inequality \cref{conclusion_lem_exponential_decrease_firewall_uniform} follows from the conclusion \cref{exponential_decrease_firewall} of \cref{lem:exponential_decrease_firewall}. \Cref{lem:exp_decrease_beyond_invasion_speed_uniform_version} is proved. 
\end{proof}
\subsubsection{Sufficient condition for stability at one end of space}
\begin{lemma}[sufficient condition for small firewall at one end of space]
\label{lem:sufficient_condition_small_firewall_at_infinity}
There exist positive quantities $\deltaAsymptStab$ and $\LoneulToF$ such that the following assertion holds: for every real quantity $x_0$, if
\begin{equation}
\label{sufficient_condition_small_firewall_at_infinity}
\sup_{\widebar{x}\ge x_0} \int_{\widebar{x}}^{\widebar{x}+1} \Bigl( \bigl(u_0(x)-m\bigr)^2 + u_0'(x)^2\Bigr) \, dx \le 2\,\deltaAsymptStab^2
\,,
\end{equation}
then
\begin{equation}
\label{small_firewall_at_infinity}
\sup_{\widebar{x}\ge x_0+\LoneulToF}\fff(\widebar{x},0)\le \frac{\desc(m)^2}{4}
\,.
\end{equation}
The quantity $\deltaAsymptStab$ depends on $V$ and $\ddd$ and $m$ (only), whereas $\LoneulToF$ depends additionally on $\norm{u_0}_X$. 
\end{lemma}
\begin{remark}
The factor $2$ in the right-hand site of inequality \cref{sufficient_condition_small_firewall_at_infinity} is here to ensure that this inequality follows from assumption \cref{hyp_lem_sufficient_condition_stability_right_end_of_space} of \cref{lem:sufficient_condition_stability_right_end_of_space}. 
\end{remark}
\begin{notation}
For every $v$ in $\rr^d$, recall (see \cref{subsubsec:Escape_dist}) that $\sigma\bigl(D^2V(u)\bigr)$ denotes the spectrum (the set of eigenvalues) of the Hessian matrix of $V$ at $v$, and let $\eigVmax(v)$ denote the maximum of this spectrum:
\[
\eigVmax(v) = \max \Bigl(\sigma\bigl(D^2V(v)\bigr)\Bigr)
\,;
\]
and let 
\[
\eigVmaxBar(m) = \max_{v\in\rr^d,\,v-m\le\dEsc(m)}\eigVmax(v)
\,.
\]
Proceeding as in the proof of \cref{lem:estim_from_def_escape}, it follows that, for every $v$ in $\rr^d$ such that $\abs{v-m}$ is not larger than $\dEsc(m)$,
\begin{equation}
\label{upper_bound_V_loc}
V(v)-V(m)\le \frac{\eigVmaxBar(m)}{2} (v-m)^2
\,.
\end{equation}
To simplify the forthcoming expressions, let us introduce the two quantities
\begin{equation}
\label{def_Kone_Ktwo}
K_1 = \max\Bigl(\frac{\coeffEn\, \eigDmax}{2}, 1\Bigr)
\quad\text{and}\quad
K_2 = \max\Bigl(\frac{\coeffEn\, \eigDmax}{2}, \coeffEn\, \eigVmaxBar(m)+ \frac{1}{2}\Bigr)
\,.
\end{equation}
According to inequality \cref{Linfty_norm_bounded_from_above_by_Honeul} of \vref{cor:Linfty_norm_bounded_from_above_by_Honeul}, 
\[
\norm{u_0}_{\Linfty} \le \sqrt{2}\norm{u_0}_X
\,.
\]
Let us introduce the quantity
\begin{equation}
\label{def_Vmax}
\Vmax = \max_{v\in\rr^d,\, \abs{v}\le \sqrt{2}\norm{u_0}_X} V(v)-V(m)
\,.
\end{equation}
Let 
\begin{align}
\label{def_deltaAsymptStab}
&\deltaAsymptStab = \dEsc(m)\sqrt{\frac{1-e^{-\kappa}}{32\, K_2}}
\quad\text{(thus }\deltaAsymptStab\le\frac{\dEsc(m)}{2} \text{ ),}
\\
\text{and}\qquad
\label{def_Lprime}
&\LoneulToF = \frac{1}{\kappa}\log\Biggl(\frac{8}{\dEsc(m)^2 }\biggl(\frac{\coeffEn\, \Vmax +m^2}{\kappa} + \frac{K_1}{1-e^{-\kappa}}\norm{u_0}_X^2\biggr)\Biggr)
\,.
\end{align}
These quantities will enable us to derive the conclusions of \cref{lem:sufficient_condition_small_firewall_at_infinity}. Observe that $\deltaAsymptStab$ depends on $V$ and $\ddd$ and $m$ (only), whereas $\LoneulToF$ depends additionally on $\norm{u_0}_X$, which fits with the conclusions of \cref{lem:sufficient_condition_small_firewall_at_infinity}.
\end{notation}
\begin{proof}[Proof of \cref{lem:sufficient_condition_small_firewall_at_infinity}]
Let us assume that there exists a real quantity $x_0$ such that the assumption \cref{sufficient_condition_small_firewall_at_infinity} of \cref{lem:sufficient_condition_small_firewall_at_infinity} holds, and let $\widebar{x}$ denote a real quantity satisfying
\[
\widebar{x} \ge x_0 + \LoneulToF
\,.
\]
According to the definitions \cref{def_normalized_potential_solution} of the normalized potential and solution and \cref{def_fff} of $\fff$, 
\[
\fff(\widebar{x},0) = \int_{\rr} e^{-\kappa\abs{x-\widebar{x}}}\biggl(\coeffEn\Bigl(\frac{1}{2}\abs{u_0'(x)}_{\ddd}^2 + V\bigl(u_0(x)\bigr)-V(m)\Bigr) + \frac{1}{2}\bigl(u_0(x)-m\bigr)^2\biggr)\, dx 
\,,
\]
so that $\fff(\widebar{x},0)$ can be written as the sum of two quantities $\iiiLeft(\widebar{x})$ and $\iiiMain(\widebar{x})$ defined as
where 
\[
\begin{aligned}
\iiiLeft(\widebar{x}) &= \int_{-\infty}^{x_0} e^{-\kappa\abs{x-\widebar{x}}}\biggl(\coeffEn\Bigl(\frac{1}{2}\abs{u_0'(x)}_{\ddd}^2 + V\bigl(u_0(x)\bigr)-V(m)\Bigr) + \frac{1}{2}\bigl(u_0(x)-m\bigr)^2\biggr)\, dx \,, \\
%\text{and}\qquad
\iiiMain(\widebar{x}) &= \int_{x_0}^{+\infty} e^{-\kappa\abs{x-\widebar{x}}}\biggl(\coeffEn\Bigl(\frac{1}{2}\abs{u_0'(x)}_{\ddd}^2 + V\bigl(u_0(x)\bigr)-V(m)\Bigr) + \frac{1}{2}\bigl(u_0(x)-m\bigr)^2\biggr)\, dx
\,.
\end{aligned}
\]
Let us consider the first integral $\iiiLeft(\widebar{x})$. According to the definition \cref{def_Vmax} of $\Vmax$, 
\[
\iiiLeft(\widebar{x}) \le \int_{-\infty}^{x_0} e^{-\kappa(\widebar{x}-x)}\biggl(\frac{\coeffEn\, \eigDmax}{2}u_0'(x)^2 + \coeffEn\, \Vmax + u_0(x)^2 + m^2\biggr)\, dx
\,,
\]
thus, according to the definition \cref{def_Kone_Ktwo} of $K_1$,
\[
\iiiLeft(\widebar{x}) \le \int_{-\infty}^{x_0} e^{-\kappa(\widebar{x}-x)}\Bigl(\coeffEn\, \Vmax + m^2 + K_1\bigl(u_0'(x)^2 + u_0(x)^2\bigr) \Bigr)\, dx
\,,
\]
 and since $\widebar{x}$ is assumed to be greater than or equal to $x_0+\LoneulToF$, 
\[
\iiiLeft(\widebar{x}) \le \exp(-\kappa \LoneulToF)\biggl(\frac{\coeffEn\, \Vmax +m^2}{\kappa} + \frac{K_1}{1-e^{-\kappa}}\norm{u_0}_X^2\biggr) 
\,,
\]
and according to the definition \cref{def_Lprime} of $\LoneulToF$, it follows that
\begin{equation}
\label{upp_bound_Ileft}
\iiiLeft(\widebar{x})\le \frac{\desc(m)^2}{8}
\,.
\end{equation}
Let us now consider the second integral $\iiiMain(\widebar{x})$. It follows from assumption \cref{sufficient_condition_small_firewall_at_infinity}, from inequality \cref{embedding_HoneLoc_into_Linfty} of \cref{lem:embedding_HoneLoc_into_Linfty}, and from the definition \cref{def_deltaAsymptStab} of $\deltaAsymptStab$ that, for every $x$ greater than or equal to $x_0$, 
\[
\abs{u_0(x)}\le 2\,\deltaAsymptStab \le \dEsc(m)
\,.
\]
As a consequence, it follows from inequality \cref{upper_bound_V_loc} that
\[
\iiiMain(\widebar{x}) \le \int_{x_0}^{+\infty} e^{-\kappa\abs{x-\widebar{x}}}\biggl(\coeffEn\Bigl(\frac{1}{2}\abs{u_0'(x)}_{\ddd}^2 + \Bigl(\coeffEn\eigVmaxBar(m)+ \frac{1}{2}\Bigr)\bigl(u_0(x)-m\bigr)^2  \biggr)\, dx
\,,
\]
thus it follows from the definition \cref{def_Kone_Ktwo} of the quantity $K_2$ that 
\[
\iiiMain(\widebar{x}) \le K_2 \int_{x_0}^{+\infty} e^{-\kappa\abs{x-\widebar{x}}} \bigl(u_0'(x)^2 + \bigl(u_0(x)-m\bigr)^2\bigr)\, dx
\,,
\]
and it follows from assumption \cref{sufficient_condition_small_firewall_at_infinity} that
\[
\iiiMain(\widebar{x}) \le \frac{4K_2}{1-e^{-\kappa}} \deltaAsymptStab^2
\,,
\]
and it finally follows from the definition \cref{def_deltaAsymptStab} of $\deltaAsymptStab$ that
\begin{equation}
\label{upp_bound_Imain}
\iiiMain(\widebar{x}) \le \frac{\desc(m)^2}{8}
\,.
\end{equation}
In view of \cref{upp_bound_Ileft,upp_bound_Imain}, \cref{lem:sufficient_condition_small_firewall_at_infinity} is proved. 
\end{proof}
\subsection{Proofs of the results of \texorpdfstring{\cref{subsubsec:preliminary_results_stability_at_ends_of_space}}{sub-subsection \ref{subsubsec:preliminary_results_stability_at_ends_of_space}}}
\label{subsec:proof_of_preliminary_results_stab_infinity}
\subsubsection{Proof of \texorpdfstring{\cref{lem:sufficient_condition_stability_right_end_of_space}}{Lemma \ref{lem:sufficient_condition_stability_right_end_of_space}}}
\label{subsubsec:proof_of_lem_sufficient_condition_stability_right_end_of_space}
\begin{proof}[Proof of \cref{lem:sufficient_condition_stability_right_end_of_space}]
It follows from \cref{lem:sufficient_condition_small_firewall_at_infinity} that, if hypothesis \cref{hyp_lem_sufficient_condition_stability_right_end_of_space} of \cref{lem:sufficient_condition_stability_right_end_of_space} holds, then there exists a real quantity $x_0$ such that
\[
\sup_{x\ge x_0}\fff(x,0) \le \frac{\desc(m)^2}{4}
\,, 
\]
so that, according to the definition \cref{def_h_noesc} of the hull function $\hullnoesc$, 
\[
\fff(x,0)\le \hullnoesc (x-x_0)
\quad\text{for all $x$ in $\rr$.}
\]
In other words, assumption \cref{hyp_lem_F_under_hullnoesc} of \cref{lem:F_under_hullnoesc} holds. According to the conclusion \cref{F_under_hullnoesc} of this lemma, for every nonnegative time $t$
\[
\fff(x,t)\le \hullnoesc(x-x_0-\cnoesc\, t)
\quad\text{for all $x$ in $\rr$,}
\]
so that 
\[
\sup_{x\ge x_0 + \cnoesc\, t} \fff(x,t)\le \frac{\desc(m)^2}{2}
\,.
\]
According to \cref{cor:esc_Esc}, it follows that
\[
\SigmaEsc(t)\subset\bigl(-\infty,x_0+\cnoesc (t-t_0)\bigr]
\,.
\]
Thus assumption \cref{hyp_Sigma_Esc_in_domain_growing_at_speed_cone} of \cref{lem:exponential_decrease_firewall} holds with $c_1$ equal to $\cnoesc$ and $t_0$ equal to $0$. It follows from the conclusion \cref{exponential_decrease_firewall} of this lemma that
\[
\sup_{x\ge x_0 + (\cnoesc+1) t} \fff(x,t)\to0 
\quad\text{as}\quad
t\to+\infty
\,.
\]
According to the coercivity \cref{coerc_fire_hone} of $\fff(x,t)$ and to inequality \cref{Linfty_norm_bounded_from_above_by_Honeul} of \vref{cor:Linfty_norm_bounded_from_above_by_Honeul}, the solution $u$ is stable close to $m$ at the right hand of space (\cref{def:solution_stable_at_one_end_of_space}). \Cref{lem:sufficient_condition_stability_right_end_of_space} is proved.
\end{proof}
\subsubsection{Proof of \texorpdfstring{\cref{lem:upper_bound_on_invasion_speed}}{Lemma \ref{lem:upper_bound_on_invasion_speed}}}
\label{subsubsec:proof_of_lem_upper_bound_on_invasion_speed}
\begin{proof}[Proof of \cref{lem:upper_bound_on_invasion_speed}]
If the solution $u$ is stable close to $m$ at the right end of space, then it follows from the upper bound \cref{bound_u_ut_ck} on $u_{xx}$ that there exists a positive time $t_0$ (arbitrarily large) and a real quantity $x_0$ such that 
\[
\sup_{\widebar{x}\ge x_0} \int_{\widebar{x}}^{\widebar{x}+1} \bigl( u^\dag(x,t_0)^2 + u_x^\dag(x,t_0)^2\bigr) \, dx \le 2\,\deltaAsymptStab^2
\,,
\]
so that, according to \cref{lem:sufficient_condition_small_firewall_at_infinity}, 
\[
\sup_{\widebar{x}\ge x_0+\LoneulToF}\fff(\widebar{x},t_0)\le \frac{\desc(m)^2}{4}
\,.
\]
Since $t_0$ can be chosen arbitrarily large, let us assume that $t_0$ is greater than or equal to $\Tatt[u_0]$. Then it follows from \cref{lem:exp_decrease_beyond_invasion_speed_uniform_version} that, for every positive quantity $\delta c$ and for every time $t$ greater than or equal to $t_0$, 
\[
\sup_{x\ge x_0+\LoneulToF+(\cnoescatt+\delta c)(t-t_0)}\fff(x,t)\le \KF'' \exp\bigl(-\nuF''\, (t-t_0)\bigr)
\,,
\]
where the quantities $\nuF''$ and $\KF''$ are defined as in \cref{def_nuF_second_KF_second}. Thus it follows from the coercivity \cref{coerc_fire_hone} of $\fff(x,t)$ that
\[
\sup_{\widebar{x}\ge x_0+\LoneulToF+(\cnoescatt+\delta c)(t-t_0)}\int_{\widebar{x}}^{\widebar{x}+1} \bigl(u^\dag(x,t)^2 + u_x^\dag(x,t)^2\bigr) \, dx \to0
\quad\text{as}\quad
t\to+\infty
\,,
\]
and thus it follows from inequality \cref{Linfty_norm_bounded_from_above_by_Honeul} of \vref{cor:Linfty_norm_bounded_from_above_by_Honeul} that 
\[
\sup_{\widebar{x}\ge x_0+\LoneulToF+(\cnoescatt+\delta c)(t-t_0)}\abs{u(x,t)} \to0
\quad\text{as}\quad
t\to+\infty
\,.
\]
It follows that
\begin{equation}
\label{cInvPlus_not_larger_than_cnoescatt}
\cInvPlus[u]\le \cnoescatt
\,,
\end{equation}
and according to its definition \cref{subsubsec:exp_decrease_beyond_invasion_speed_uniform_version}, the quantity $\cnoescatt$ depends only on $V$ and $\ddd$ and $m$ (but not on $u$). 
\Cref{lem:upper_bound_on_invasion_speed} is proved. 
\end{proof}
\subsubsection{Proof of \texorpdfstring{\cref{lem:exponential_decrease_beyond_invasion_speed}}{Lemma \ref{lem:exponential_decrease_beyond_invasion_speed}}}
\label{subsubsec:proof_of_lem_exponential_decrease_beyond_invasion_speed}
\begin{proof}[Proof of \cref{lem:exponential_decrease_beyond_invasion_speed}]
Let us assume that the solution $u$ is stable close to $m$ at the right end of space, and let $c$ denote a (positive) quantity larger than the invasion speed $\cInvPlus[u]$. Let us introduce the quantity
\[
c' = \frac{1}{2}\bigl(\cInvPlus[u]+c\bigr)\,,
\quad\text{so that}\quad
\cInvPlus[u] < c' < c
\,.
\]
According to \cref{def:invasion_speed} of the invasion speed, 
\[
\sup_{x\ge c't}\abs{u(x,t)}\to 0 
\quad\text{as}\quad
t\to+\infty
\,,
\]
so that, according to the the upper bound \cref{bound_u_ut_ck} on $u_{xx}$
\[
\sup_{\widebar{x}\ge c't}\int_{\widebar{x}}^{\widebar{x}+1} \bigl(u^\dag(x,t)^2 + u_x^\dag(x,t)^2\bigr) \, dx\to 0 
\quad\text{as}\quad
t\to+\infty
\,.
\]
It follows that there exists a positive time $t_1$ such that, for every time $t$ greater than or equal to $t_1$, 
\[
\sup_{\widebar{x}\ge c't}\int_{\widebar{x}}^{\widebar{x}+1} \bigl(u^\dag(x,t)^2 + u_x^\dag(x,t)^2\bigr) \, dx \le 2 \deltaAsymptStab^2
\,,
\]
so that, according to \cref{lem:sufficient_condition_small_firewall_at_infinity}, 
\[
\sup_{x\ge c't + \LoneulToF}\fff(x,0)\le \frac{\desc(m)^2}{4}
\,.
\]
Since $c'$ is smaller than $c$, there exists a time $t_0$ greater than or equal to $t_1$ such that
\begin{equation}
\label{c_prime_t_zero_plus_L_one_smaller_than_c_t_zero}
c't_0 + \LoneulToF \le ct_0
\,.
\end{equation}
Thus, as a consequence of \cref{lem:exponential_decrease_firewall} for $t_0$ and $x_0$ equal to $c't_0+\LoneulToF$ and $c_1$ equal to $c'$ and $c_2$ equal to $c$, it follows that, for every time $t$ greater than or equal to $t_0$,
\[
\sup_{x\ge c't_0+\LoneulToF + c(t-t_0)}\fff(x,t)\le \KF' \exp\bigl(-\nuF' (t-t_0)\bigr) 
\,,
\]
with quantities $\nuF'$ and $\KF'$ given by expressions \cref{def_nu_and_K_exp_decrease_firewall} with $c_1 = c'$ and $c_2 = c$. Thus it follows from \cref{c_prime_t_zero_plus_L_one_smaller_than_c_t_zero} that, 
\begin{equation}
\label{upper_bound_fff_proof_lemma_exponential_decrease_beyond_invasion_speed}
\sup_{x\ge ct}\fff(x,t)\le \KF' \exp\bigl(-\nuF' (t-t_0)\bigr) = \KF' \exp(\nuF' t_0)\exp(-\nuF' t)
\,.
\end{equation}
Besides, for every $\widebar{x}$ in $\rr$,
\[
\begin{aligned}
\int_{\widebar{x}}^{\widebar{x}+1}\bigl( u^\dag(x,t)^2 + u_x^\dag(x,t)^2\bigr) \, dx &\le \exp(\kappa)\int_{\widebar{x}}^{\widebar{x}+1}T_{\widebar{x}}\psi(x)\bigl( u^\dag(x,t)^2 + u_x^\dag(x,t)^2\bigr) \, dx \\
&\le\exp(\kappa)\int_{\rr}T_{\widebar{x}}\psi(x)\bigl( u^\dag(x,t)^2 + u_x^\dag(x,t)^2\bigr) \, dx
\,,
\end{aligned}
\]
and thus, according to inequality \cref{coerc_fire_hone} ensuring the coercivity of $\fff(x,t)$,
\[
\int_{\widebar{x}}^{\widebar{x}+1}\bigl( u^\dag(x,t)^2 + u_x^\dag(x,t)^2\bigr) \, dx \le \frac{\exp(\kappa)}{\min\Bigl(\frac{\coeffEn\eigDmin}{2},\frac{1}{4}\Bigr)}\fff(\widebar{x},t)
\,.
\]
Thus, introducing the quantities
\[
\Koneul[u] = \frac{\KF'\exp(\nuF' t_0 + \kappa)}{\min\Bigl(\frac{\coeffEn\eigDmin}{2},\frac{1}{4}\Bigr)}
\quad\text{and}\quad
K_\infty[u] = \sqrt{2\Koneul[u]}
\quad\text{and}\quad
\nu_\infty = \frac{\nuF'}{2}
\,,
\]
it follows from \cref{upper_bound_fff_proof_lemma_exponential_decrease_beyond_invasion_speed} that
\begin{equation}
\label{exp_decrease_honeul_norm_beyond_invasion_speed}
\sup_{\widebar{x}\in[ct,+\infty)}\int_{\widebar{x}}^{\widebar{x}+1}\bigl( u^\dag(x,t)^2 + u_x^\dag(x,t)^2\bigr) \, dx \le \Koneul[u] \exp(-\nuF' t)
\,,
\end{equation}
and, according to inequality \cref{Linfty_norm_bounded_from_above_by_Honeul} of \vref{cor:Linfty_norm_bounded_from_above_by_Honeul},
\[
\sup_{x\in[ct,+\infty)}\abs{u(x,t)-m}\le K_\infty[u]\exp(-\nu_\infty t)
\,.
\]
In other words, the conclusion \cref{exponential_decrease_beyond_invasion_speed} of \cref{lem:exponential_decrease_beyond_invasion_speed} holds for $t$ greater than or equal to $t_0$ (with the quantities $K_\infty[u]$ and $\nu_\infty$). According to the upper bound \cref{maximal_radius_excursion_Linfty} on $\abs{u(x,t)}$, up to increasing the quantity $K_\infty[u]$, the same conclusion holds for all $t$ in $[0,+\infty)$. 
\Cref{lem:exponential_decrease_beyond_invasion_speed} is proved. 
\end{proof}
\begin{proof}[Proof of \cref{cor:bist}]
According to the conclusions of \cref{lem:exponential_decrease_beyond_invasion_speed,lem:upper_bound_on_invasion_speed}, a small enough $\HoneulAlone$-perturbation of a bistable solution connecting two points $m_-$ and $m_+$ of $\mmm$ still satisfies, for large positive times, the condition \cref{hyp_lem_sufficient_condition_stability_right_end_of_space} of \cref{lem:sufficient_condition_stability_right_end_of_space} (both for $m_-$ at the left end of space and for $m_+$ at the right end of space) It follows that this perturbation is still a bistable solution connecting $m_-$ to $m_+$. The set of such bistable solutions is thus open in $X$. The fact that this set is nonempty follows from \cref{lem:sufficient_condition_stability_right_end_of_space}. 
\end{proof}
\section{Asymptotic energy}
\label{sec:asympt_en}
The aim of this \namecref{sec:asympt_en} is to prove \vref{prop:asympt_en} about the the asymptotic energy of a bistable solution. 
\subsection{Set-up}
%\label{subsec:asympt_en_setup}
%
As everywhere else, let us consider a function $V$ in $\ccc^2(\rr^d,\rr)$ satisfying hypothesis \cref{hyp_coerc}. Let $(m_-,m_+)$ denote an ordered pair of points of $\mmm$ in the same level set of $V$, let us write 
\[
\valueOfV=V(m_-)=V(m_+)
\,,
\]
let $u_0$ in $X$ be a bistable initial condition connecting $m_-$ to $m_+$, and let $(x,t)\mapsto u(x,t)$ denote the solution of system~\cref{init_syst} corresponding to this initial condition. Let us introduce the ``normalized'' potential $V^\ddag:\rr^d\to\rr$, $v\mapsto V^\ddag(v)$, defined as
\begin{equation}
\label{def_V_ddag_asymptotic_energy}
V^\ddag(v) = V(v) - \valueOfV = V(v) - V(m_\pm)
\,.
\end{equation}
The function $(x,t)\mapsto u(x,t)$ is still a solution of system \cref{init_syst} with $V^\ddag$ instead of $V$. 
\subsection{Localized energy}
\label{subsec:localized_energy}
\subsubsection{Definition}
There are several ways to define the localized energy of the solution. The advantages of the following definition are:
\begin{itemize}
\item it leads to natural estimates in terms of the firewall functionals defined in the previous \namecref{sec:spat_asympt}, 
\item it does not rely on the regularizing properties of system~\cref{init_syst} --- it is thus easier to extend to other classes of systems like the damped hyperbolic system~\cref{hyp_syst},
\item it provides the same explicit estimates as those that will be used for the proof of the upper semi-continuity of the asymptotic energy in \cref{sec:upp_semicont_asympt_en}. 
\end{itemize}
Let us denote:
\begin{itemize}
\item by $\kappa_-$ (by $\kappa_+$) the quantity defined in \cref{def_kappa} and denoted by $\kappa$ in \cref{sec:spat_asympt}, for the minimum point $m_-$ (for the minimum point $m_+$);
\item and by $\cnoescattMinus$ (by $\cnoescattPlus$) the quantity defined and denoted by $\cnoescatt$ in \cref{subsubsec:exp_decrease_beyond_invasion_speed_uniform_version}, for the minimum point $m_-$ (for the minimum point $m_+$);
\end{itemize}
and let
\begin{equation}
\label{def_kappa_cUpp_asymptotic_energy}
\kappa = \min(\kappa_-,\kappa_+)
\quad\text{and}\quad
\cUpp = \max(\cnoescattMinus,\cnoescattPlus)+1
\,,
\end{equation}
so that, according to inequality \cref{cInvPlus_not_larger_than_cnoescatt}, 
\begin{equation}
\label{cUpp_larger_than_both_invasion_speeds}
\cUpp > \max(\cInvMinus[u],\cInvPlus[u])
\,.
\end{equation}
For every time $t$, let us introduce the three intervals:
\[
\begin{aligned}
\iLeft(t) &= (-\infty, -\cUpp\, t] \,, \\
\text{and}\qquad
\iMain(t) &= [-\cUpp\,  t , \cUpp\,  t] \,, \\
\text{and}\qquad
\iRight(t) &= [ \cUpp\,  t , +\infty) \,.
\end{aligned}
\]
Let us introduce the weight function $\chi$ defined as
\begin{equation}
\label{def_weight_fct_en}
\chi(x,t) = 
\left\{
\begin{aligned}
&\exp\bigl(-\kappa(\cUpp\,  t-x)\bigr) = T_{-\cUpp\,  t}\ \psi(x) 
&& \quad\text{if}\quad x\in\iLeft(t) \,, \\
&1 
&& \quad\text{if}\quad x\in\iMain(t) \,, \\
&\exp\bigl(-\kappa(x-\cUpp\,  t)\bigr) = T_{\cUpp\,  t}\ \psi(x) 
&& \quad\text{if}\quad x\in\iRight(t)
\,,
\end{aligned}
\right.
\end{equation}
see \cref{fig:weight_en}.
\begin{figure}[!htbp]
\centering
\includegraphics[width=\textwidth]{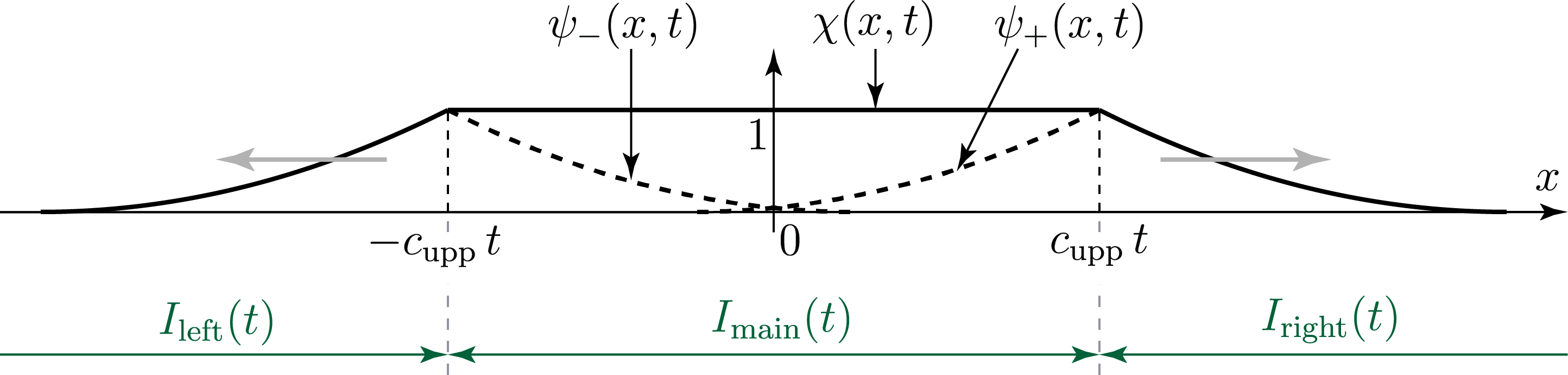}
\caption{Graph of the weight function $x\mapsto\chi(x,t)$ defining the localized energy $\eee(t)$.}
\label{fig:weight_en}
\end{figure}
For $x$ in $\rr$ and $t$ in $[0,+\infty)$, let
\begin{equation}
\label{def_Eddag}
E^\ddag(x,t) = \frac{1}{2}\abs{u_x(x,t)}_{\ddd} ^2 + V^\ddag\bigl(u(x,t)\bigr)
\,,
\end{equation}
and for $t$ in $[0,+\infty)$, let us define the ``localized energy'' $\eee(t)$ by
\begin{equation}
\label{def_energy}
\eee(t) = \int_{\rr} \chi(x,t)\,  E^\ddag(x,t) \, dx 
\,.
\end{equation}
\subsubsection{Time derivative}
For all nonnegative time $t$, let us introduce the following quantity (``localized dissipation''):
\[
\Delta(t) = \int_{\rr} \chi(x,t)\, u_t(x,t)^2 \, dx
\,.
\]
\begin{lemma}[time derivative of localized energy]
\label{lem:dt_en_loc_preliminary}
For every nonnegative time $t$,
\begin{equation}
\label{dt_en_loc_preliminary}
\eee'(t) \le -\frac{1}{2} \Delta(t) + \kappa\int_{\iLeft(t)\sqcup\iRight(t)} \chi\biggl[\frac{\cUpp+\kappa\, \eigDmax}{2}\abs{u_x}_{\ddd}^2 + \cUpp\, V^\ddag(u)\biggr]\, dx
\,.
\end{equation}
\end{lemma}
\begin{proof}
It follows from expression \vref{ddt_loc_en} for the derivative of a localized energy that
\[
\eee'(t) = - \Delta (t) + \int_{\rr} \biggl[\chi_t \, \Bigl( \frac{1}{2}\abs{u_x}_{\ddd} ^2 + V^\ddag(u) \Bigr) - \chi_x \, \ddd u_x\cdot u_t \biggr]\, dx
\,.
\]
It follows from the definition of $\chi$ that
\[
\chi_t(x,t)=
\left\{
\begin{aligned}
&0  
&&\text{if}\quad x\in\iMain(t) \,, \\
& \kappa\, \cUpp\,   \chi(x,t)
&&\text{if}\quad x\not\in\iMain(t) \,, \\
\end{aligned}
\right.
\]
and 
\[
\chi_x(x,t)=
\left\{
\begin{aligned}
&0  
&&\text{if}\quad x\in\iMain(t) \,, \\
& -\sgn(x)\kappa\, \chi(x,t)
&&\text{if}\quad x\not\in\iMain(t) \,. \\
\end{aligned}
\right.
\]
Thus it follows from these expressions that
\[
\eee'(t) \le - \Delta (t)+ \kappa \int_{\iLeft(t)\sqcup\iRight(t)} \chi\biggl[ \cUpp \Bigl( \frac{1}{2}\abs{u_x}_{\ddd} ^2 + V^\ddag(u) \Bigr) + \abs{\ddd u_x\cdot u_t} \biggr]\, dx
\,.
\]
Thus, using the inequality (compare with inequality \vref{polarize_kappa_ddd_ux_ut})
\[
\kappa\abs{\ddd u_x\cdot u_t} \le \frac{1}{2}u_t^2 + \frac{\kappa^2\eigDmax}{2} \abs{u_x}_\ddd^2
\,,
\]
 inequality \cref{dt_en_loc_preliminary} follows. \Cref{lem:dt_en_loc_preliminary} is proved. 
\end{proof}
\subsubsection{Firewalls}
For $x$ in $\rr$ and $t$ in $[0,+\infty)$, let
\[
F^\ddag_\pm(x,t) = \coeffEn E^\ddag(x,t) + \frac{1}{2}(u\bigl(x,t)-m_\pm\bigr)^2
\,,
\]
and for $\widebar{x}$ in $\rr$ and $t$ in $[0,+\infty)$, let us define the ``firewall'' functions $\fff_-(\widebar{x},t)$ and $\fff_+(\widebar{x},t)$ by
\begin{equation}
\label{def_fire_of_bar_x_and_t}
\fff_\pm(\widebar{x},t) = \int_{\rr} T_{\bar x}\psi(x)\, F^\ddag_\pm(x,t)\, dx 
\,,
\end{equation}
where $x\mapsto\psi(x)$ is the weight function defined in \vref{def_psi}. These two functions $\fff_\pm(\widebar{x},t)$ are to $m_-$ and $m_+$ what the firewall function $\fff(\widebar{x},t)$ of \cref{subsubsec:firewall_definition} was to the minimum point $0_{\rr^d}$ of $V^\dag$. To simplify the notation of the next calculations, let us introduce the more specific weight functions $(x,t)\mapsto\psi_\pm(x,t)$ defined as
\[
\begin{aligned}
\psi_-(x,t) &= T_{-\cUpp\,   t}\ \psi (x,t) = \exp\bigl(-\kappa\abs{x+\cUpp\,   t}\bigr) \,, \\
\text{and}\qquad
\psi_+(x,t) &= T_{\cUpp\,   t}\ \psi (x,t) = \exp\bigl(-\kappa\abs{x-\cUpp\,   t}\bigr) \,,
\end{aligned}
\]
see \cref{fig:weight_en}, and the more specific firewall functions $\fff_-$ and $\fff_+$ defined as
\begin{equation}
\label{def_fire}
\fff_\pm(t) = \fff_\pm(\pm \cUpp\,  t,t) = \int_{\rr} \psi_\pm(x,t) \, F^\ddag_\pm(x,t) \, dx 
\,.
\end{equation}
\subsubsection{Energy decrease up to firewalls}
\begin{lemma}[energy decrease up to firewalls]
\label{lem:energy_decrease_up_to_firewall}
There exists a positive quantity $\KEF$, depending on $V$ and $\ddd$ and $m_-$ and $m_+$, such that for every nonnegative time $t$, 
\begin{equation}
\label{equ_energy_decrease_up_to_firewall}
\eee'(t) \le -\frac{1}{2} \Delta(t) + \KEF \bigl(\fff_-(t) + \fff_+(t)\bigr)
\,.
\end{equation}
\end{lemma}
\begin{proof}
It follows from inequality \cref{dt_en_loc_preliminary} that, for every nonnegative time $t$ (observe the substitution of $\chi$ by $\psi_-$ and $\psi_+$), 
\[
\begin{aligned}
\eee'(t) \le -\frac{1}{2} \Delta(t) 
& + \kappa\int_{\iLeft(t)} \psi_- \biggl[\frac{\cUpp+\kappa\, \eigDmax}{2}\abs{u_x}_{\ddd}^2 + \cUpp\,   V^\ddag(u)\biggr]\, dx \\
& + \kappa\int_{\iRight(t)} \psi_+ \biggl[\frac{\cUpp+\kappa\, \eigDmax}{2}\abs{u_x}_{\ddd}^2 + \cUpp\,   V^\ddag(u)\biggr]\, dx
\,.
\end{aligned}
\]
Thus,
\[
\begin{aligned}
&\eee'(t) \le -\frac{1}{2} \Delta(t) + \\
& \kappa\int_{\iLeft(t)} \psi_- \biggl[\frac{\cUpp+\kappa\, \eigDmax}{2}\abs{u_x}_{\ddd}^2 + \cUpp\Bigl( V^\ddag(u) + \frac{1}{2\coeffEn} (u-m_-)^2\Bigr) \biggr]\, dx \\
& + \kappa\int_{\iRight(t)} \psi_+ \biggl[\frac{\cUpp+\kappa\, \eigDmax}{2}\abs{u_x}_{\ddd}^2 + \cUpp\Bigl( V^\ddag(u) + \frac{1}{2\coeffEn} (u-m_+)^2\Bigr) \biggr]\, dx
\,.
\end{aligned}
\]
According to inequality \vref{def_weight_en} the quantities
\[
 V^\ddag(u) + \frac{1}{2\coeffEn} (u-m_\pm)^2 = \frac{1}{\coeffEn}\Bigl(\coeffEn\, V^\ddag(u) + \frac{1}{2} (u-m_\pm)^2\Bigr)
\]
are nonnegative. As a consequence the previous inequality still holds if the factor $\cUpp$ of these quantities is replaced by the larger factor $\cUpp+\kappa\, \eigDmax$ and if the domains of integration of the two integrals are extended to the whole real line. After these changes the inequality reads
\[
\eee'(t) \le -\frac{1}{2} \Delta(t) + \frac{\kappa(\cUpp+\kappa\, \eigDmax)}{\coeffEn}\bigl(\fff_-(t) + \fff_+(t)\bigr)
\,.
\]
Thus, if $\KEF$ denotes the quantity $\kappa(\cUpp+\kappa\, \eigDmax)/\coeffEn$, then inequality \cref{equ_energy_decrease_up_to_firewall} and \cref{lem:energy_decrease_up_to_firewall} are proved. 
\end{proof}
\subsubsection{Energy decrease up to pollution}
\label{subsubsec:localized_energy_decrease}
\begin{lemma}[energy decrease up to pollution]
\label{lem:loc_en_almost_decr}
There exists positive quantities $\KE$ and $\nuE$ and a nonnegative time $t_0$ such that, for every time $t$ greater than or equal to $t_0$, 
\begin{equation}
\label{dt_en_loc_real}
\eee'(t) \le -\frac{1}{2} \Delta(t) + \KE \, \exp\bigl(-\nuE(t-t_0)\bigr)
\,.
\end{equation}
The quantities $\KE$ and $\nuE$ depend on $V$ and $\ddd$ and $m_-$ and $m_+$ (only), whereas the time $t_0$ depends additionally on the solution under consideration. 
\end{lemma}
\begin{proof}
Since $u$ is a bistable solution connecting $m_-$ to $m_+$, and according to inequality \cref{cUpp_larger_than_both_invasion_speeds} satisfied by the quantity $\cUpp$, it follows from \cref{lem:sufficient_condition_small_firewall_at_infinity} that there exists a time $t_0$ in $\bigl[\Tatt[u_0],+\infty\bigr)$ such that
\[
\sup_{x\in\iLeft(t_0)} \fff_-(x,t_0) \le \frac{\desc(m)^2}{4}
\quad\text{and}\quad
\sup_{x\in\iRight(t_0)} \fff_+(x,t_0) \le \frac{\desc(m)^2}{4}
\,.
\]
Then, according to \cref{lem:exp_decrease_beyond_invasion_speed_uniform_version}, there exist positive quantities $\nuUnifMinus$ and $\nuUnifPlus$ and $\KunifMinus$ and $\KunifPlus$ (defined as in \cref{def_nuF_second_KF_second} with $\delta c$ equal to $1$) such that, for every time $t$ greater than or equal to $t_0$, 
\begin{equation}
\label{exp_decrease_fff_of_x_and_t_plus_minus}
\begin{aligned}
\sup_{x\in\iLeft(t)}\fff_-(x,t) &\le \KunifMinus \exp\bigl(-\nuUnifMinus(t-t_0)\bigr) \,, \\
\text{and}\qquad
\sup_{x\in\iRight(t)}\fff_+(x,t) &\le \KunifPlus \exp\bigl(-\nuUnifPlus(t-t_0)\bigr)
\,,
\end{aligned}
\end{equation}
thus, in particular, 
\begin{equation}
\label{exp_decrease_fff_plus_minus}
\fff_\pm(t)\le \KunifPlusMinus \exp\bigl(-\nuUnifPlusMinus(t-t_0)\bigr)
\,.
\end{equation}
As a consequence, introducing the positive quantities
\[
\nuE = \min(\nuUnifMinus,\nuUnifPlus)
\quad\text{and}\quad
\KE = \KEF (\KunifMinus+\KunifPlus)
\,,
\]
inequality \cref{dt_en_loc_real} follows from inequality \cref{equ_energy_decrease_up_to_firewall} of \cref{lem:energy_decrease_up_to_firewall}. \Cref{lem:loc_en_almost_decr} is proved.
\end{proof}
\subsection{Asymptotic energy}
\label{subsubsec:asympt_en_incomplete}
\begin{corollary}[asymptotic energy]
\label{cor:asympt_en_as_limit_of_eee}
There exists a quantity $\eeeAsympt[u]$ (``asymptotic energy'' of the solution) in $\{-\infty\}\cup\rr$ such that
\begin{equation}
\label{asympt_en_as_limit_of_eee}
\eee(t) \to \eeeAsympt[u] \quad\text{as}\quad t\to+\infty
\,.
\end{equation}
\end{corollary}
\begin{proof}
Since the dissipation $\Delta(t)$ is nonnegative, the limit \cref{asympt_en_as_limit_of_eee} follows from inequality \cref{dt_en_loc_real} of \cref{lem:loc_en_almost_decr}. \Cref{cor:asympt_en_as_limit_of_eee} is proved. 
\end{proof}
\begin{lemma}[convergence towards asymptotic energy for various speeds of bounds of spatial domain]
\label{lem:convergence_to_asympt_enery_for_various_speeds_of_bounds}
For all positive quantities $c_-$ and $c_+$ satisfying 
\begin{equation}
\label{hyp_convergence_to_asympt_enery_for_various_speeds_of_bounds}
\cInvMinus[u] < c_-
\quad\text{and}\quad
\cInvPlus[u] < c_+
\,,
\end{equation}
the following limit holds:
\begin{equation}
\label{convergence_to_asympt_enery_for_various_speeds_of_bounds}
\int_{- c_- t}^{c_+ t}  E^\ddag(x,t) \, dx  
\to \eeeAsympt[u]
\quad\text{as}\quad t\to+\infty
\,.
\end{equation}
\end{lemma}
\begin{proof}
Let $c_-$ and $c_+$ be two positive quantities satisfying inequalities \cref{hyp_convergence_to_asympt_enery_for_various_speeds_of_bounds}. 
For every nonnegative time $t$, let us introduce the quantity $\delta\eee(t)$ defined as
\[
\begin{aligned}
\delta\eee(t) &= \eee(t) - \int_{- c_- t}^{c_+ t}  E^\ddag(x,t)\, dx \\
&= \int_{\iLeft(t)\sqcup\iRight(t)} \chi(x,t) E^\ddag(x,t)\, dx + \int_{\iMain(t)}E^\ddag(x,t)\, dx - \int_{- c_- t}^{c_+ t}  E^\ddag(x,t)\, dx
\,.
\end{aligned}
\]
All what remains to be proved is that this quantity $\delta\eee(t)$ goes to $0$ as $t$ goes to $+\infty$. For every nonnegative time $t$, let us introduce the two intervals
\[
\begin{aligned}
\jLeft(t) &= [-\max(c_-,\cUpp)t,-\min(c_-,\cUpp)t] \,,\\
\text{and}\qquad
\jRight(t) &= [\min(c_+,\cUpp)t,\max(c_+,\cUpp)t]
\,,
\end{aligned}
\]
and the integrals
\[
\iii(t) = \int_{\iLeft(t)\sqcup\iRight(t)} \chi(x,t) \abs{E^\ddag(x,t)}\, dx
\quad\text{and}\quad
\jjj(t) = \int_{\jLeft(t)\sqcup\jRight(t)}\abs{E^\ddag(x,t)}\, dx
\,.
\]
According to this notation,
\begin{equation}
\label{eee_of_t_smaller_than_iii_plus_jjj}
\abs{\delta\eee(t)}\le \iii(t) + \jjj(t)
\,.
\end{equation}
It follows from inequalities \cref{cUpp_larger_than_both_invasion_speeds,hyp_convergence_to_asympt_enery_for_various_speeds_of_bounds} that, if $t$ is large enough, 
\[
\begin{aligned}
\sup_{x\in\iLeft(t)\cup\jLeft(t)}\abs{u(x,t)-m_-}_{\ddd} &\le \dEsc(m_-) \,,\\
\text{and}\quad
\sup_{x\in\iRight(t)\cup\jRight(t)}\abs{u(x,t)-m_+}_{\ddd} &\le \dEsc(m_+)
\,,
\end{aligned}
\]
so that, according to inequality \cref{nonnegative_pot_around_loc_min} of \vref{lem:estim_from_def_escape}, 
\[
E^\ddag(x,t)\ge V^\ddag\bigl(u(x,t)\bigr) \ge 0
\quad\text{for all $x$ in $\bigl(\iLeft(t)\cup\jLeft(t)\bigr)\sqcup\bigl(\iRight(t)\cup\jRight(t)\bigr)$.}
\]
It follows that
\[
\iii(t) = \int_{\iLeft(t)\sqcup\iRight(t)} \chi(x,t) \, E^\ddag(x,t)\, dx
\quad\text{and}\quad
\jjj(t) = \int_{\jLeft(t)\sqcup\jRight(t)}E^\ddag(x,t)\, dx
\,.
\]
As a consequence, on the one hand,
\[
\begin{aligned}
\iii(t)=& \int_{\iLeft(t)} \psi_-(x,t)\, E^\ddag(x,t)\, dx + \int_{\iRight(t)} \psi_+(x,t)\, E^\ddag(x,t)\, dx \\
\le& \frac{1}{\coeffEn}\biggl(\int_{\iLeft(t)} \psi_-(x,t) \Bigl(\coeffEn E^\ddag(x,t) + \frac{1}{2}\bigl(u(x,t)-m_-\bigr)^2\Bigr)\, dx \\
&+ \int_{\iRight(t)} \psi_+(x,t) \Bigl(\coeffEn E^\ddag(x,t) + \frac{1}{2}\bigl(u(x,t)-m_+\bigr)^2\Bigr)\, dx\biggr)
\,.
\end{aligned}
\]
According to inequality \vref{def_weight_en}, the integrands of the two integrals of the right-hand side of this last inequality are nonnegative for every real quantity $x$. As a consequence, the inequality still holds if both domains of integration are extended to the whole real line. It follows that, still if $t$ is large enough, 
\[
\iii(t) \le \frac{1}{\coeffEn}\bigl(\fff_-(t)+\fff_+(t)\bigr)
\,,
\]
thus it follows from inequality \cref{exp_decrease_fff_plus_minus} that $\iii(t)$ goes to $0$ as $t$ goes to $+\infty$. On the other hand, it follows from inequality \vref{upper_bound_V_loc} that 
\[
\begin{aligned}
\int_{\jLeft(t)} E^\ddag(x,t)\, dx &\le \int_{\jLeft(t)} \Bigl(\frac{\eigDmax}{2}u_x(x,t)^2 + \eigVmaxBar(m_-)\bigl(u(x,t)-m_-\bigr)^2\Bigr) \, dx \,, \\ 
\text{and}\quad
\int_{\jRight(t)} E^\ddag(x,t)\, dx &\le \int_{\jRight(t)} \Bigl(\frac{\eigDmax}{2}u_x(x,t)^2 + \eigVmaxBar(m_+)\bigl(u(x,t)-m_+\bigr)^2\Bigr) \, dx
\,,
\end{aligned}
\]
and according to inequality \vref{exp_decrease_honeul_norm_beyond_invasion_speed} the right hand sides of these two inequalities go to $0$ as $t$ goes to $+\infty$, so that $\jjj(t)$ goes to $0$ as $t$ goes to $+\infty$. In view of inequality \cref{eee_of_t_smaller_than_iii_plus_jjj}, \cref{lem:convergence_to_asympt_enery_for_various_speeds_of_bounds} is proved. 
\end{proof}
\Cref{prop:asympt_en} follows from \cref{cor:asympt_en_as_limit_of_eee,lem:convergence_to_asympt_enery_for_various_speeds_of_bounds}. In \cref{subsec:app_zero_Ham_all_times} it will be proved that, provided that the additional hypothesis \textup{(\hyperlink{hypOnlyMin}{\hypOnlyMinRef{\valueOfV}})} holds, the asymptotic energy $\eeeAsympt[u]$ is actually either equal to $-\infty$ or nonnegative, as stated in conclusion \cref{item:thm_main_nonnegative_asympt_energy_approach_bistable_stationary} of \cref{thm:main}. 
\section{Upper semi-continuity of the asymptotic energy}
\label{sec:upp_semicont_asympt_en}
The aim of this \namecref{sec:upp_semicont_asympt_en} is to prove \cref{prop:scs_asympt_en} about the upper semi-continuity of the asymptotic energy with respect to bistable initial conditions. 

As everywhere else, let us consider a function $V$ in $\ccc^2(\rr^d,\rr)$ satisfying hypothesis \cref{hyp_coerc}. Let $(m_-,m_+)$ denote an ordered pair of points of $\mmm$ in the same level set of $V$, let 
$(\uzeron)_{n\in\nn}$ denote a sequence of functions in $\Xbist(m_-,m_+)$ (bistable initial conditions connecting $m_-$ to $m_+$), and let $\uzeroinfty$ denote a function in 
$\Xbist(m_-,m_+)$, such that
\[
\norm{\uzeron-\uzeroinfty}_{X} \to 0
\quad\text{as}\quad
n\to +\infty
\,.
\]
Our aim is to prove that
\begin{equation}
\label{eeeAsympt_of_u_zero_infty_larger_than_limsup_of_eeeAsympt_of_u_zero_n}
\eeeAsympt[\uzeroinfty]\ge\limsup_{n\to+\infty}\eeeAsympt[\uzeron]
\,.
\end{equation}
For every $n$ in $\nn\cup\{\infty\}$ and for all $x$ in $\rr$ and $t$ in $[0,+\infty)$, let $u_n(\cdot,\cdot)$ denote the solution of system~\cref{init_syst} for the initial condition $\uzeron$. Let us consider the same weight function $(x,t)\mapsto\chi(x,t)$ as the one defined in~\vref{def_weight_fct_en}, and, for every $n$ in $\nn\cup\{\infty\}$ and $t$ in $[0,+\infty)$, let us consider the quantities 
\[
\eee_n(t)
\quad\text{and}\quad
\fff_{-,n}(x,t)
\quad\text{and}\quad
\fff_{+,n}(x,t)
\quad\text{and}\quad
\fff_{-,n}(t)
\quad\text{and}\quad
\fff_{+,n}(t)
\,,
\]
defined exactly as in~\vref{def_energy,def_fire_of_bar_x_and_t,def_fire} for the solution $u_n$. 
\begin{lemma}[uniform bound on the derivative of localized energies]
\label{lem:up_bd_dt_en_p}
There exists a nonnegative time $t_0$ and a nonnegative integer $n_0$ such that, for every integer $n$ greater than $n_0$ and every time $t$ greater than $t_0$, the following inequality holds:
\begin{equation}
\label{up_bd_dt_en_p}
\eee_n'(t) \le \KE \exp\bigl(-\nuE(t-T)\bigr)
\,.
\end{equation}
\end{lemma}
\begin{proof}
Inequality \cref{up_bd_dt_en_p} will follow from inequality \vref{dt_en_loc_real} (the sole additional requirement is some uniformity with respect to $n$). 
Let $\Rinit$ denote the supremum of the set
\[
\Bigl\{\,\norm{\uzeron}_{X} : n\in\nn\cup\{\infty\} \, \Bigr\}
\]
(this quantity is finite). 
According to \vref{prop:glob_exist_sol_att_ball}, there exists a quantity $\Tatt$ (depending on $V$ and $\ddd $ and $\Rinit$, but not on $n$), such that, for every time $t$ greater than $\Tatt$ and every $n$ in $\nn\cup\{\infty\}$, 
\[
\sup_{x\in\rr}\abs{u_n(x,t)}\le \RattInfty
\,.
\]
It follows from the same arguments as in the proof of \vref{lem:sufficient_condition_small_firewall_at_infinity} that there exists a nonnegative time $t_0$ such that
\[
\sup_{x\in\iLeft(t_0)}\fff_{-,\infty}(x,t_0)\le\frac{\desc(m)^2}{8}
\quad\text{and}\quad
\sup_{x\in\iRight(t_0)}\fff_{+,\infty}(x,t_0)\le\frac{\desc(m)^2}{8}
\,;
\]
in addition, the time $t_0$ may be chosen greater than or equal to $\Tatt$. Then, by continuity of the semi-flow $(S_t)_{t\ge0}$ of system \cref{init_syst} with respect to initial conditions in $X$, there exists a nonnegative integer $n_0$ such that, for every integer $n$ greater than $n_0$, 
\[
\sup_{x\in\iLeft(t_0)}\fff_{-,n}(x,t_0)\le \frac{\desc(m)^2}{4}
\quad\text{and}\quad
\sup_{x\in\iRight(t_0)}\fff_{+,n}(x,t_0)\le \frac{\desc(m)^2}{4}
\,,
\]
and it follows from \vref{lem:exp_decrease_beyond_invasion_speed_uniform_version} that there exist positive quantities $\nuUnifMinus$ and $\nuUnifPlus$ and $\KunifMinus$ and $\KunifPlus$ (defined as in \cref{def_nuF_second_KF_second} with $\delta c$ equal to $1$) such that, for every integer $n$ greater than $n_0$ and every time $t$ greater than $t_0$,
\[
\begin{aligned}
\fff_{-,n}(t)&\le \KunifMinus\exp\bigl(-\nuUnifMinus(t-t_0)\bigr) \,,\\
\text{and}\qquad
\fff_{+,n}(t)&\le \KunifPlus\exp\bigl(-\nuUnifPlus(t-t_0)\bigr)
\end{aligned}
\,.
\]
Thus, introducing the same quantities 
\[
\nuE = \min(\nuUnifMinus,\nuUnifPlus)
\quad\text{and}\quad
\KE = \KEF(\KunifMinus+\KunifPlus)
\]
as in the proof of \vref{lem:loc_en_almost_decr}, inequality \cref{up_bd_dt_en_p} follows from inequality \cref{dt_en_loc_real} of \cref{lem:loc_en_almost_decr}. \Cref{lem:up_bd_dt_en_p} is proved. 
\end{proof}
\begin{proof}[Proof of \cref{prop:scs_asympt_en}]
Since $\eee_n(t)$ goes to $\eeeAsympt[\uzeron]$ as time goes to $+\infty$, it follows from inequality \cref{up_bd_dt_en_p} of \cref{lem:up_bd_dt_en_p} that, still for every integer $n$ greater than $n_0$ and every time $t$ greater than $t_0$,
\[
\eee_n(t) \ge \eeeAsympt[\uzeron] - \frac{\KE}{\nuE}\exp\bigl(-\nuE(t-t_0)\bigr)
\,.
\]
Passing to the limit as $n$ goes to $+\infty$, it follows from the continuity of the semi-flow $(S_t)_{t\ge0}$ of system \cref{init_syst} with respect to initial conditions in $X$ that, for every time $t$ greater than $t_0$, 
\[
\eee_\infty(t) \ge \limsup_{n\to+\infty}\eeeAsympt[\uzeron]-\frac{\KE}{\nuE}\exp\bigl(-\nuE(t-t_0)\bigr) \,. 
\]
Finally, passing to the limit as time goes to $+\infty$, inequality \cref{eeeAsympt_of_u_zero_infty_larger_than_limsup_of_eeeAsympt_of_u_zero_n} follows. \Cref{prop:scs_asympt_en} is proved. 
\end{proof}
\section{Finite asymptotic energy implies relaxation}
\label{sec:relaxation}
The aim of this \namecref{sec:relaxation} is to prove conclusions \cref{item:thm_main_time_derivative_goes_to_zero,item:thm_main_invasion_speeds_vanish} of \cref{thm:main}. 

As everywhere else, let us consider a function $V$ in $\ccc^2(\rr^d,\rr)$ satisfying hypothesis \cref{hyp_coerc}. As in \cref{sec:asympt_en}, let $(m_-,m_+)$ denote an ordered pair of points of $\mmm$ in the same level set of $V$, and let $(x,t)\mapsto u(x,t)$ denote a bistable solution connecting $m_-$ to $m_+$ for system~\cref{init_syst}. Additionally, let us assume that the asymptotic energy $\eeeAsympt[u]$ of the solution is finite: 
\begin{equation}
\label{assumption_finite_asymptotic_energy}
\eeeAsympt[u] > -\infty
\,.
\end{equation}
\subsection{Asymptotically vanishing time derivative}
The following lemma completes the proof of conclusion \cref{item:thm_main_time_derivative_goes_to_zero} of \cref{thm:main}. 
\begin{lemma}[time derivative goes to zero]
\label{lem:ut_small}
The following limit holds:
\begin{equation}
\label{ut_small}
\sup_{x\in\rr}\ \abs{u_t(x,t)}\to0
\quad\text{as}\quad
t\to+\infty
\,.
\end{equation}
\end{lemma}
\begin{proof}
According to inequality \cref{cUpp_larger_than_both_invasion_speeds} satisfied by the quantity $\cUpp$, both quantities
\[
\sup_{x\le-\cUpp\, t}\abs{u(x,t)-m_-}
\quad\text{and}\quad
\sup_{x\ge\cUpp\, t}\abs{u(x,t)-m_+}
\]
go to $0$ as time goes to $+\infty$. Thus, according to the bounds \cref{bound_u_ut_ck} on the solution, 
\begin{equation}
\label{ut_goes_to_zero_beyond_cCut_t}
\sup_{\abs{x}\ge \cUpp\, \, t}\abs{u_t(x,t)}\to 0
\quad\text{as}\quad
t\to+\infty
\,.
\end{equation}
Let us proceed by contradiction and assume that the limit \cref{ut_small} does not hold. Then, there exists a positive quantity $\varepsilon$ and a sequence $(x_n,t_n)_{n\in\nn}$ in $\rr\times[0,+\infty)$ such that $t_n\to+\infty$ as $n\to+\infty$ and such that, for every $n$ in $\nn$, 
\begin{equation}
\label{contradiction_hyp_u_t_not_small}
\abs{u_t(x_n,t_n)}\ge \varepsilon
\,.
\end{equation}
According \cref{ut_goes_to_zero_beyond_cCut_t}, it may be assumed (up to dropping the first terms of the sequence $(x_n,t_n)_{n\in\nn}$) that, for every $n$ in $\nn$, $x_n$ belongs to the interval $[-\cUpp t_n,\cUpp t_n]$. According to \cref{lem:compactness}, there exists an entire solution $\widebar{u}$ of system \cref{init_syst} such that, up to replacing the sequence $(x_n,t_n)_{n\in\nn}$ by a subsequence, with the notation of \cref{compactness},
\begin{equation}
\label{convergence_up_to_subsequence}
D^{2,1}u(x_n+\cdot,t_n+\cdot)\to D^{2,1}\widebar{u}
\quad\text{as}\quad
n\to+\infty
\,,
\end{equation}
uniformly on every compact subset of $\rr^2$. It follows from \cref{contradiction_hyp_u_t_not_small,convergence_up_to_subsequence} that the quantity $\abs{\widebar{u}_t(0,0)}$ is positive, so that the quantity
\[
\int_0^1 \left(\int_{\rr}e^{-\kappa\abs{\xi}}\abs{\widebar{u}_t(\xi,s)}^2 \, d\xi\right)\, ds
\] 
is also positive. This quantity is less than or equal to the quantity
\[
\liminf_{n\to+\infty}\int_0^1 \Delta(t_n+s) \, ds
\,,
\]
which is therefore also positive. On the other hand, according to the approximate decrease of energy \cref{dt_en_loc_real} and to assumption \cref{assumption_finite_asymptotic_energy}, the nonnegative function $t\mapsto \Delta(t)$ is integrable on $[0,+\infty)$, a contradiction. \Cref{lem:ut_small} is proved.  
\end{proof}
\subsection{Invasion speeds vanish}
\label{subsec:invasion_speeds_vanish}
For every nonnegative time $t$, let us introduce the quantity $\xEscPlus(t)$ in $\rr\cup\{-\infty,+\infty\}$ (``Escape point to the right''), defined as the supremum of the set 
\[
\bigl\{x\in\rr: \abs{u(x,t)-m_+}_{\ddd} =\dEsc(m_+)\bigr\}
\,,
\]
with the convention that this supremum equals $-\infty$ if this set is empty. It follows from inequality \cref{cUpp_larger_than_both_invasion_speeds} satisfied by the quantity $\cUpp$ that, for every large enough positive time $t$, 
\begin{equation}
\label{alternative_xEsc}
\text{either}\quad 
\xEscPlus(t) = -\infty
\quad\text{or}\quad 
-\cUpp\, t < \xEscPlus(t) < \cUpp\, t 
\,.
\end{equation}
\begin{lemma}[transversality at Escape point]
\label{lem:transv}
There exist positive quantities $\epsEscTransv$ and $\tEscTransv$ such that, for every $t$ in $[\tEscTransv,+\infty)$, if $\xEscPlus(t)$ is finite, then 
\[
\Bigl\langle u\bigl(\xEscPlus(t),t\bigr)-m_+ \,, \,u_x\bigl(\xEscPlus(t),t\bigr)\Bigr\rangle_{\ddd} \le -\epsEscTransv \,.
\] 
\end{lemma}
\begin{proof}
Let us proceed by contradiction and assume that there exists a sequence $(t_n)_{n\in\nn}$ such that $t_n$ goes to $+\infty$ as $n$ goes to $+\infty$ and such that, for every nonnegative integer $n$, 
\[
\begin{aligned}
&-\infty < \xEscPlus(t_n)<+\infty \\
\text{and}\quad 
&\Bigl\langle u\bigl(\xEscPlus(t_n),t_n\bigr)-m_+ \,, \,u_x\bigl(\xEscPlus(t_n),t_n\bigr)\Bigr\rangle_{\ddd} \ge -1/n 
\,. 
\end{aligned}
\]
Up to extracting a subsequence, it may be assumed, according to \cref{lem:compactness,lem:ut_small}, that the functions 
\[
\xi\mapsto u\bigl(\xEscPlus(t_n)+\xi,t_n\bigr)
\]
converge, uniformly on every compact subset of $\rr$, towards a stationary solution $\xi\mapsto u_\infty(\xi)$ of system~\cref{init_syst} satisfying 
\[
\bigl\langle u_\infty(0)-m_+,u_\infty'(0)\bigr\rangle_{\ddd} \ge0
\quad\text{and, for all $\xi$ in $[0,+\infty)$,}\quad
\abs{u_\infty(\xi)-m_+}_{\ddd} \le\dEsc(m_+)
\]
(the second property follows from the definition of $\xEscPlus(t)$). This is contradictory to assertion \cref{item:transv_spatial_asymptotics_sw} of \vref{lem:ham_equ_hyp}. \Cref{lem:transv} is proved.
\end{proof}
Up to increasing the quantity $\tEscTransv$, it may be assumed that assertion \cref{alternative_xEsc} holds for every time $t$ greater than or equal to $\tEscTransv$. 
\begin{corollary}[finiteness/infiniteness of $\xEscPlus(\cdot)$ dichotomy]
\label{cor:finitenes_infiniteness_of_xEscPlus_dichotomy}
One of the following two (mutually exclusive) alternatives occurs: 
\begin{enumerate}
\item for every time $t$ greater than or equal to $\tEscTransv$, the quantity $\xEscPlus(t)$ equals $-\infty$,
\label{item:alternative_xEscPlus_equals_minus_infinity_for_t_large}
\item (or) for every time $t$ greater than or equal to $\tEscTransv$, the quantity $\xEscPlus(t)$ is finite.
\label{item:alternative_xEscPlus_is_finite_for_t_large}
\end{enumerate}
\end{corollary}
\begin{proof}
Let us introduce the function
\[
f:\rr\times [0,+\infty) \to \rr, \quad
(x,t)\mapsto \frac{1}{2} \bigl(\abs{u(x,t)-m_+ }_{\ddd} ^2-\dEsc(m_+)^2\bigr)
\,.
\]
According to the smoothness properties of the solution recalled in \cref{subsec:glob_exist}, this function $f$ is of class $\ccc^1$ on $\rr\times(0,+\infty)$. 
For all $t$ in $[0,+\infty)$, if $\xEscPlus(t)$ is finite then $f\bigl(\xEscPlus(t),t\bigr)$ vanishes. If in addition $t$ is greater than or equal to the (positive) quantity $\tEscTransv$ defined in \cref{lem:transv}, then 
\begin{equation}
\label{dx_f_non_0}
\partial_x f\bigl(\xEscPlus(t),t\bigr) = \Bigl\langle u\bigl(\xEscPlus(t),t\bigr)-m_+ \,, \,u_x\bigl(\xEscPlus(t),t\bigr)\Bigr\rangle_{\ddd} \le -\epsEscTransv <0
\,.
\end{equation}
Let us introduce the set
\[
\ttt = \bigl\{t\in[\tEscTransv,+\infty): \xEscPlus(t)>-\infty\bigr\}
\,.
\]
It follows from inequality \cref{dx_f_non_0} and from the Implicit Function Theorem that this set is open in $[\tEscTransv,+\infty)$; and it follows from the definition of $\xEscPlus(t)$ and from assertion \cref{alternative_xEsc} (which was assumed to hold for every time $t$ greater than or equal to $\tEscTransv$) that this set is closed in $[\tEscTransv,+\infty)$. As a consequence, this set $\ttt$ is either empty or equal to $[\tEscTransv,+\infty)$, and \cref{cor:finitenes_infiniteness_of_xEscPlus_dichotomy} is proved. 
\end{proof}
\begin{lemma}[approach to a homogeneous equilibrium]
\label{lem:cv_homog_case}
Assume that alternative \cref{item:alternative_xEscPlus_equals_minus_infinity_for_t_large} of \cref{cor:finitenes_infiniteness_of_xEscPlus_dichotomy} occurs (that is, $\xEscPlus(t)$ equals $-\infty$ for every time $t$ greater than or equal to $\tEscTransv$). Then the minimum points $m_-$ and $m_+$ must be equal, and 
\[
\sup_{x\in\rr}\abs{u(x,t)-m_{\pm}} \to 0
\quad\text{as}\quad 
t\to +\infty
\,.
\]
\end{lemma}
\begin{proof}
The fact that $m_-$ equals $m_+$ follows from the definition of $\dEsc(m_\pm)$. The uniform convergence towards $m_+$ (equal to $m_-$) may again be obtained either by contradiction and a compactness argument, or by observing that, according to inequality \vref{dt_fire} and for every $x$ in $\rr$, the quantity $\fff(x,t)$ (which is nonnegative according to \cref{coerc_fire_nonnegative}) goes to $0$, at an exponential rate, when $t$ goes to $+\infty$. 
\end{proof}
\begin{lemma}[asymptotically vanishing time derivative of $\xEscPlus(t)$]
\label{lem:asymptotically_vanishing_time_derivative_xEscPlus}
Assume that alternative \cref{item:alternative_xEscPlus_is_finite_for_t_large} of \cref{cor:finitenes_infiniteness_of_xEscPlus_dichotomy} occurs (that is, $\xEscPlus(t)$ is finite for every time $t$ greater than or equal to $\tEscTransv$). Then, the function $t\mapsto \xEscPlus(t)$ is of class $\ccc^1$ on the interval $[\tEscTransv,+\infty)$ and
\[
\xEscPlus'(t)\to 0
\quad\text{as}\quad
t\to+\infty
\,.
\]
\end{lemma}
\begin{proof}
It follows from the Implicit Function Theorem applied to the function $f$ introduced in the proof of \cref{cor:finitenes_infiniteness_of_xEscPlus_dichotomy} that, if alternative \cref{item:alternative_xEscPlus_is_finite_for_t_large} occurs, then the function $t\mapsto \xEscPlus(t)$ is of class $\ccc^1$ on $[\tEscTransv,+\infty)$. For every time $t$ in this interval, the quantity $\xEscPlus'(t)$ reads:
\[
\xEscPlus'(t) = 
-\frac
{\partial_t f\bigl(\xEscPlus(t),t\bigr)}
{\partial_x f\bigl(\xEscPlus(t),t\bigr)}
= 
-\frac
{\Bigl\langle u\bigl(\xEscPlus(t) ,t\bigr)-m_+  \,, \,u_t\bigl(\xEscPlus(t), t\bigr)\Bigr\rangle_{\ddd} }
{\Bigl\langle u\bigl(\xEscPlus(t) ,t\bigr)-m_+  \,, \,u_x\bigl(\xEscPlus(t),t \bigr)\Bigr\rangle_{\ddd} }
\,.
\]
According to~\cref{lem:ut_small}, the numerator of this expression goes to $0$ as time goes to $+\infty$, while according to inequality \cref{dx_f_non_0} the absolute value of its denominator remains not smaller than $\epsEscTransv$, and the conclusion follows. 
\Cref{lem:asymptotically_vanishing_time_derivative_xEscPlus} is proved.
\end{proof}
\begin{proof}[Proof of conclusion \cref{item:thm_main_invasion_speeds_vanish} of \cref{thm:main}]
Conclusion \cref{item:thm_main_invasion_speeds_vanish} of \cref{thm:main} states that both invasion speeds $\cInvMinus[u]$ and $\cInvPlus[u]$ vanish. If alternative \cref{item:alternative_xEscPlus_equals_minus_infinity_for_t_large} of \cref{cor:finitenes_infiniteness_of_xEscPlus_dichotomy} occurs, then this statement follows from \cref{lem:cv_homog_case}. If alternative \cref{item:alternative_xEscPlus_is_finite_for_t_large} of \cref{cor:finitenes_infiniteness_of_xEscPlus_dichotomy} occurs, then according to \cref{lem:asymptotically_vanishing_time_derivative_xEscPlus} the quantity $\xEscPlus'(t)$ goes to $0$ as time goes to $+\infty$, and it follows from \cref{lem:exponential_decrease_firewall}, from the coercivity \cref{coerc_fire_hone} of $\fff(x,t)$ and from inequality \cref{Linfty_norm_bounded_from_above_by_Honeul} of \vref{cor:Linfty_norm_bounded_from_above_by_Honeul} that the invasion speed to the right $\cInvPlus[u]$ vanishes. The same arguments lead to the same conclusion for the invasion speed to the left $\cInvMinus[u]$. Conclusion \cref{item:thm_main_invasion_speeds_vanish} of \cref{thm:main} is proved. 
\end{proof}
\section{Approach to a set of bistable stationary solutions}
\label{sec:approach_set_bist_sol}
\subsection{Set-up}
The aim of this \namecref{sec:approach_set_bist_sol} is to prove conclusion \cref{item:thm_main_nonnegative_asympt_energy_approach_bistable_stationary} of \cref{thm:main}. Let us keep the assumptions and notation of the previous \namecref{sec:relaxation}, let 
\[
\valueOfV = V(m_-) = V(m_+)
\,,
\]
and, in addition, let us assume that hypothesis \textup{(\hyperlink{hypOnlyMin}{\hypOnlyMinRef{\valueOfV}})} holds. As in definition \vref{def_V_ddag_asymptotic_energy}, let us introduce the ``normalized'' potential $V^\ddag$ defined as 
\begin{equation}
\label{def_V_ddag_relaxation}
V^\ddag (v) = V(v)-\valueOfV = V(v)-V(m_\pm)
\,.
\end{equation}
Our task is to prove that
\begin{equation}
\sup_{x\in\rr} \, \dist\Bigl(\bigl(u(x,t),u_x(x,t)\bigr) \,, \, I\bigl(\PhiZero(\valueOfV)\bigr)\Bigr)\to 0
\quad\text{as}\quad
t\to+\infty
\,.
\end{equation}
Let $c$ denote a positive quantity (which may very well be chosen equal to $1$). According to conclusion \cref{item:thm_main_invasion_speeds_vanish} of \cref{thm:main} and to \cref{lem:exponential_decrease_beyond_invasion_speed}, both quantities
\begin{equation}
\label{u_close_min}
\sup_{x\le -c t}\abs{u(x,t)-m_-}  
\quad\text{and}\quad
\sup_{x\ge  ct}\abs{u(x,t)-m_+}
\end{equation}
go to $0$, at an exponential rate, as time goes to $+\infty$, and, according to the bounds~\cref{bound_u_ut_ck} of the solution, the same is true for the quantity
\begin{equation}
\label{ux_small}
\sup_{\abs{x}\ge ct}\abs{u_x(x,t)}  
\,.
\end{equation}
\subsection{Approach to normalized Hamiltonian level set zero for a sequence of times}
Recall the notation $H$ (already defined in \cref{subsubsec:not_ham}) to denote the Hamiltonian associated to the differential system of stationary solutions of system~\cref{init_syst}:
\[
H:\rr^d\times\rr^d\to\rr,\quad
(u,v)\mapsto \frac{1}{2}\abs{v}_{\ddd}^2 - V(u)
\,.
\]
and let us introduce the ``normalized'' Hamiltonian (with respect to the level $\valueOfV$):
\begin{equation}
\label{def_normalized_hamiltonian}
H^\ddag:\rr^d\times\rr^d\to\rr,\quad
(u,v)\mapsto \frac{1}{2}\abs{v}_{\ddd}^2 - V^\ddag(u) = H(u,v) + V(m_\pm) = H(u,v) + \valueOfV 
\,.
\end{equation}
\begin{lemma}[approach to normalized Hamiltonian level set zero for a sequence of times]
\label{lem:lim_inf_sup_H}
The following equality holds:
\begin{equation}
\label{lim_inf_sup_H}
\liminf_{t\to+\infty}\, \sup_{x\in\rr} \, \abs{ H^\ddag\bigl(u(x,t),u_x(x,t)\bigr)} =0 
\,. 
\end{equation}
\end{lemma}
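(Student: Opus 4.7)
The main tool is the pointwise identity
\[
\partial_x H\bigl(u(x,t),u_x(x,t)\bigr)=\ddd u_{xx}\cdot u_x-\nabla V(u)\cdot u_x= u_t\cdot u_x,
\]
which follows from the evolution equation $\ddd u_{xx}=u_t+\nabla V(u)$. Since $u(\cdot,t)\to m_\mp$ and $u_x(\cdot,t)\to 0$ uniformly and exponentially on the outer strips $|x|\ge\cnoinv t$ (by \cref{u_close_min,ux_uxx_ut_small}, using $V(m_\pm)=0$), the Hamiltonian $H(u,u_x)$ approaches zero uniformly on those strips. It therefore suffices to exhibit a sequence $t_k\to+\infty$ for which $\sup_{x\in[-\cnoinv t_k,\cnoinv t_k]}|H(u(x,t_k),u_x(x,t_k))|$ tends to $0$.

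Because $\eee_\infty[u_0]$ is finite, the approximate decrease \cref{dt_en_loc_real} implies that $t\mapsto\Delta(t)$ is integrable on $[0,+\infty)$; since $t\mapsto 1/t$ is not integrable at infinity, this forces $\liminf_{t\to+\infty} t\,\Delta(t)=0$, so I pick a sequence $t_k\to+\infty$ with $t_k\,\Delta(t_k)\to0$. Integrating the identity above from $-\cnoinv t_k$ to any $x$ in the central strip gives
\[
\bigl|H\bigl(u(x,t_k),u_x(x,t_k)\bigr)\bigr|\le \bigl|H\bigl(u(-\cnoinv t_k,t_k),u_x(-\cnoinv t_k,t_k)\bigr)\bigr|+\int_{-\cnoinv t_k}^{\cnoinv t_k}|u_y\cdot u_t|\,dy,
\]
and the boundary term vanishes by the same exponential tail estimate. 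Cauchy--Schwarz bounds the remaining integral by $\bigl(\int_{-\cnoinv t_k}^{\cnoinv t_k}|u_y|^2\,dy\bigr)^{1/2}\bigl(\int_{-\cnoinv t_k}^{\cnoinv t_k}|u_t|^2\,dy\bigr)^{1/2}$; since $\chi\equiv 1$ on the central strip, the second factor is at most a constant times $\sqrt{\Delta(t_k)}$.

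The real point is to show that $\int_{-\cnoinv t_k}^{\cnoinv t_k}|u_y|^2\,dy$ grows no faster than linearly in $t_k$. Using the decomposition $|u_y|_\ddd^2/2 =L(u,u_y)-V(u)$: first, $\int_{-\cnoinv t_k}^{\cnoinv t_k} L\,dy$ differs from $\eee(t_k)$ only by the two exponentially-weighted tail integrals of $\chi L$, which are uniformly bounded because $L$ is uniformly bounded on the attracting ball provided by \cref{lem:attr_ball}; hence this part is $O(1)$. Second, $|V|$ is uniformly bounded on that attracting ball by some constant $M$, so $\bigl|\int_{-\cnoinv t_k}^{\cnoinv t_k}V(u)\,dy\bigr|\le 2M\cnoinv t_k$. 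Combining the two gives $\int_{-\cnoinv t_k}^{\cnoinv t_k}|u_y|^2\,dy=O(1+t_k)$, so the integral bound becomes $O\bigl(\sqrt{t_k\Delta(t_k)}\bigr)=o(1)$, proving the claim.

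The only real obstacle is this linear-in-$t_k$ growth bound: integrability of $\Delta$ alone would be insufficient to compete with a superlinear growth of $\int|u_y|^2$. It is supplied precisely by the uniform $L^\infty$ control on the solution provided by \cref{lem:attr_ball}, which in turn rests on the coercivity hypothesis \hypCoerc. Once this is granted, the remainder is a routine combination of Cauchy--Schwarz, integrability of the dissipation, and the exponential spatial asymptotics already established in \cref{sec:spat_asympt}.
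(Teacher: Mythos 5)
Your argument is correct, and it rests on exactly the same core ingredients as the paper's: the identity $\partial_x H\bigl(u,u_x\bigr)=u_x\cdot u_t$, the integrability of $t\mapsto\Delta(t)$ coming from the approximate energy decrease, the exponential smallness of $|H|$ on the outer strips $|x|\ge\cnoinv t$, and a Cauchy--Schwarz step to convert $\int|u_x\cdot u_t|$ into something controlled by $t\,\Delta(t)$. The main organizational difference is that you argue directly, picking $t_k$ with $t_k\Delta(t_k)\to0$, whereas the paper argues by contradiction: it supposes $\sup_x|H|\ge\delta$ for all large $t$, integrates $|\partial_x H|$ across the central strip to deduce a lower bound $\ge 2\delta$ on $\int|u_x\cdot u_t|$, and then concludes $\liminf t\,\Delta(t)>0$, contradicting integrability. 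A genuine (minor) stylistic difference is how you get the linear-in-$t$ bound $\int_{-\cnoinv t}^{\cnoinv t}|u_x|^2\,dx=O(t)$: you reconstruct it from the energy decomposition $|u_x|_\ddd^2/2=L-V$ and the boundedness of $\eee(t)$, which is correct but somewhat roundabout, because the same $O(t)$ bound follows in one line from the uniform $L^\infty$ estimate on $u_x$ in \cref{bound_u_ut_ck} (which the paper invokes, and which you already need for the boundary terms and the attracting ball). So: same approach, same key lemma, a slightly longer route on one technical estimate and a direct rather than contradiction framing; no gaps.
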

\begin{proof}
Let us proceed by contradiction and assume that the converse is true. Then there exists a positive quantity $\delta$ such that, for every large enough positive time $t$, 
\begin{equation}
\label{hyp_H_not_small}
\sup_{x\in\rr}\abs{H^\ddag\bigl(u(x,t),u_x(x,t)\bigr)}\ge\delta 
\,. 
\end{equation}
Observe that, for all $x$ in $\rr$ and $t$ in $[0,+\infty)$, the ``space derivative of the normalized Hamiltonian'' along a solution has the following simple expression:
\[
\partial_x \Bigl( H^\ddag\bigl(u(x,t),u_x(x,t)\bigr) \Bigr) = u_x\cdot\ u_t 
\,. 
\]
Thus, in view of assertions \cref{u_close_min} and~\cref{ux_small} about the behaviour of the solution outside of the interval $[-ct,ct]$, it follows from hypothesis~\cref{hyp_H_not_small} that
\[
\liminf_{t\to+\infty}\int_{-ct}^{ct}\abs{u_x(x,t)\cdot\  u_t(x,t)}\, dx\ge 2\delta
\,.
\]
Thus it follows from the bound~\cref{bound_u_ut_ck} on $\abs{u_x}$ that the limit
\[
\liminf_{t\to+\infty} \int_{-ct}^{ct} \abs{u_t(x,t)}\, dx
\]
is positive; and thus it follows from Cauchy--Schwarz inequality that the limit
\[
\liminf_{t\to+\infty} \, t \, \int_{-ct}^{ct} u_t^2(x,t)\, dx
\]
is positive. As a consequence the same is true for the limit
\[
\liminf_{t\to+\infty} \  t \, \Delta(t)
\,,
\]
a contradiction with the fact that the function $t\mapsto \Delta(t)$ is integrable on $[1,+\infty)$. 
\end{proof}
\subsection{Approach to normalized Hamiltonian level set zero for all times}
\label{subsec:app_zero_Ham_all_times}
The aim of this \namecref{subsec:app_zero_Ham_all_times} is to prove that the limit~\cref{lim_inf_sup_H} of \cref{lem:lim_inf_sup_H} holds for all time going to infinity, and not only for a subsequence of times (in other words that the $\liminf$ in \cref{lim_inf_sup_H} can be substituted by a ``full'' limit). The proof involves the asymptotic compactness of solutions (\cref{lem:compactness}) and the results of \cref{subsec:infin_lag} about the value of the Lagrangian of stationary solutions, and is based on the next two lemmas.

As in definition \vref{def_Lagrangian_intro}, let us introduce the (pointwise) Lagrangian associated to system~\cref{init_syst}:
\[
L:\rr^d\times\rr^d\to\rr,\quad
(u,v)\mapsto \frac{1}{2}\abs{v}_{\ddd}^2 + V(u)
\,,
\]
and its normalized declination (with respect to the level $\valueOfV$):
\begin{equation}
\label{def_normalized_lagrangian}
L^\ddag:\rr^d\times\rr^d\to\rr,\quad
(u,v)\mapsto \frac{1}{2}\abs{v}_{\ddd}^2 + V^\ddag(u) = L(u,v) - \valueOfV = L(u,v)-V(m_\pm)
\,.
\end{equation}
The positive quantity $\deltaHam$ defined in \vref{subsec:infin_lag} will also be used.
\begin{lemma}[small normalized Hamiltonian forces positive normalized Lagrangian]
\label{lem:contrib_Lag}
There exists a positive quantity $T$ (depending on the solution $(x,t)\mapsto u(x,t)$ under consideration) such that, for every time $t$ greater than $T$ and every real quantity $\widebar{x}$,
\[
\abs{H^\ddag\bigl(u(\widebar{x},t),u_x(\widebar{x},t)\bigr)}\le\deltaHam 
\implies
\int_{\widebar{x}}^{\widebar{x}+1} L^\ddag\bigl(u(x,t),u_x(x,t)\bigr) \, dx \ge 0
\,.
\]
\end{lemma}
\begin{proof}
Let us proceed by contradiction and assume that the converse is true. Then there exists a sequence $(x_n,t_n)_{n\in\nn}$ in $\rr\times[0,+\infty)$ such that
$t_n$ goes to $+\infty$ as $n$ goes to $+\infty$ and such that, for every nonnegative integer $n$, 
\begin{equation}
\label{hyp_contrib_neg}
\abs{H^\ddag\bigl(u(x_n,t_n),u_x(x_n,t_n)\bigr)}\le\deltaHam 
\quad\text{and}\quad
\int_{x_n}^{x_n+1} L^\ddag\bigl(u(x,t),u_x(x,t)\bigr) \, dx < 0
\,.
\end{equation}
According to \cref{lem:compactness,lem:ut_small}, up to extracting a subsequence, it may be assumed that the functions $\xi\mapsto u(x_n+\xi,t_n)$ converge, uniformly on every compact subset of $\rr$, towards a stationary solution $\xi\mapsto u_\infty(\xi)$ of system~\cref{init_syst}, satisfying
\begin{equation}
\label{hyp_contrib_neg_lim}
\abs{H^\ddag\bigl(u_\infty(\cdot),u_\infty'(\cdot)\bigr)}\le\deltaHam 
\quad\text{and}\quad
\int_{0}^{1} L^\ddag\bigl(u_\infty(\xi),u_\infty'(\xi)\bigr) \, d\xi \le 0
\,.
\end{equation}
According to \vref{lem:lag_sig_far_pos} and to the first inequality of~\cref{hyp_contrib_neg_lim}, there must exist a point $m$ of $\mmm$ in the level set $\valueOfV$ of $V$ such that $\abs{u_\infty(\xi)-m}_{\ddd}\le\dEsc(m)$ for all $\xi$ in $[0,1]$. Then it follows from the second inequality of~\cref{hyp_contrib_neg_lim} that $u_\infty$ must be identically equal to $m$, a contradiction with the second inequality of~\cref{hyp_contrib_neg}.
\end{proof}
\begin{lemma}[approach to normalized Hamiltonian level set zero for all times]
\label{lem:H_small}
The following limit holds:
\[
\sup_{x\in\rr} \abs{H^\ddag\bigl(u(x,t),u_x(x,t)\bigr)}\to 0
\quad\text{as}\quad t\to+\infty \,. 
\]
\end{lemma}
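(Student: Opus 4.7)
I argue by contradiction, combining the slow spatial variation of $H$ forced by \cref{lem:ut_small}, the compactness of \cref{lem:compact}, and the finiteness of the asymptotic energy together with \cref{lem:contrib_Lag}.

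Suppose that the conclusion fails: there exist $\epsilon>0$ and sequences $x_p\in\rr$, $t_p\to+\infty$ such that $\abs{H(u(x_p,t_p),u_x(x_p,t_p))}\ge\epsilon$. The spatial asymptotics \cref{u_close_min,ux_uxx_ut_small} force the $x_p$'s to lie in the central region $[-\cnoinv t_p,\cnoinv t_p]$ for $p$ large, since $u\to m_\pm$ and $u_x\to 0$ outside. From the identity $\partial_x H(u,u_x)=u_x\cdot u_t$, the boundedness of $u_x$, and \cref{lem:ut_small}, we obtain $\sup_x\abs{\partial_x H(u(\cdot,t),u_x(\cdot,t))}\to 0$ as $t\to+\infty$; hence the bad set
\[
B_p=\bigl\{x\in\rr:\abs{H(u,u_x)(x,t_p)}\ge\epsilon/2\bigr\}
\]
contains an interval $[x_p-L_p,x_p+L_p]$ with $L_p\to+\infty$.

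Applying \cref{lem:compact} along a subsequence produces a bounded stationary solution $u_\infty$ of~\cref{init_syst} with $u(x_p+\cdot,t_p)\to u_\infty$ in $\ccc^k_{\textrm{loc}}$. Conservation of $H$ along stationary solutions gives $H(u_\infty,u_\infty')\equiv h$ with $\abs{h}\ge\epsilon$; since $\mmm_0\times\{0\}\subset\{H=0\}$ and $h\ne 0$, the orbit of $(u_\infty,u_\infty')$ avoids the equilibria, and being bounded, must be a periodic orbit of period $T>0$ with Lagrangian average
\[
\bar L=T^{-1}\int_0^T L(u_\infty(y),u_\infty'(y))\,dy.
\]
By a diagonal extraction, arrange $\norm{u(x_p+\cdot,t_p)-u_\infty}_{\ccc^0([-N_p,N_p])}\to 0$ for some $N_p\to+\infty$ with $N_p\le L_p$; then
\[
\int_{x_p-N_p}^{x_p+N_p} L(u,u_x)(y,t_p)\,dy = 2N_p\bar L + o(N_p).
\]
For $\epsilon\le 2\deltaHam$, \cref{lem:contrib_Lag} gives $\int_x^{x+1} L\,dy\ge 0$ on every unit subinterval of $\rr\setminus B_p$ at time $t_p$, while the contribution to $\eee(t_p)$ from outside the central region is bounded uniformly in $p$; assembling these pieces produces the lower bound $\eee(t_p)\ge 2N_p\bar L-o(N_p)-O(1)$. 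For $\bar L>0$ this forces $\eee(t_p)\to+\infty$, contradicting $\eee_\infty<+\infty$. When $h<0$, the identity $V(u_\infty)=\abs{u_\infty'}_{\ddd}^2/2+\abs{h}\ge\abs{h}$ on the orbit yields $L\ge\abs{h}>0$ pointwise, hence $\bar L>0$, so this case is already covered.

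The remaining and main obstacle is the case $h>0$ with $\bar L\le 0$, for which the lower bound is not sufficient. I would close it by analyzing the ``transition zones'' flanking the bad set $B_p$, in which $\abs{H(u,u_x)(\cdot,t_p)}$ interpolates between $\epsilon/2$ inside $B_p$ and values close to $0$ deep in the good region: by the same slow variation of $H$ these zones have width going to infinity, and since $u_t\to 0$ uniformly, the trace $(u,u_x)(\cdot,t_p)$ there must follow closely a bounded stationary trajectory of the Hamiltonian system. Since no such trajectory connects the level $\{H=h\}$ to $\{H=0\}$, this forced mismatch should yield, via a refined version of \cref{lem:lag_sig_far_pos}, an additional strictly positive Lagrangian contribution of order $L_p$, producing either $\eee(t_p)\to+\infty$ (contradicting $\eee_\infty<+\infty$) or, dually, an upper bound $\eee(t_p)\to-\infty$ (contradicting $\eee_\infty>-\infty$), which delivers the final contradiction.
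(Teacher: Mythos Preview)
Your argument has two genuine gaps.

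First, the claim that the limiting stationary solution $u_\infty$ must be periodic is false in general. The Hamiltonian system~\cref{stat_app} lives in $\rr^{2n}$, and for $n\ge 2$ a bounded orbit that avoids all equilibria has no reason to be periodic; it may be quasi-periodic or chaotic. Your computation of the average $\bar L$ and the estimate $\int_{x_p-N_p}^{x_p+N_p} L\,dy = 2N_p\bar L + o(N_p)$ both rely on periodicity, so the whole energy lower bound collapses for systems.

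Second, even in the scalar case $n=1$ where periodicity would hold, you explicitly leave the case $h>0$, $\bar L\le 0$ unresolved. The sketched ``transition zone'' argument is not a proof: you appeal to a ``refined version of \cref{lem:lag_sig_far_pos}'' that is neither stated nor proved, and it is not clear that the mismatch you describe actually produces a Lagrangian contribution of the correct sign and order.

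The idea you are missing is simple and bypasses both problems at once. By \cref{lem:lim_inf_sup_H} the quantity $\sup_x\abs{H(u,u_x)}$ has $\liminf$ equal to $0$; combined with continuity of $H$ and its smallness for large $\abs{x}$, this lets you choose the contradiction sequence $(x_p,t_p)$ so that $\abs{H\bigl(u(x_p,t_p),u_x(x_p,t_p)\bigr)}$ equals a \emph{fixed small} value $\deltaHamBis\le\deltaHam$. The compactness limit $u_\infty$ then has Hamiltonian exactly $\deltaHamBis$, hence is not in $\statsol(\mmm_0)$, and since $\abs{H}\le\deltaHam$ you may invoke \cref{prop:infin_lag} directly: the Lagrangian of $u_\infty$ is $+\infty$, with no sign analysis and no periodicity needed. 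The contradiction with the boundedness of $\eee(t_p)$ then follows exactly as you set it up, using \cref{lem:contrib_Lag} on the complementary region.
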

\begin{proof}
Let us proceed by contradiction and assume that the converse is true. Then, according to \cref{lem:lim_inf_sup_H} and since the quantity 
\[
\abs{H^\ddag\bigl(u(x,t),u_x(x,t)\bigr)}
\]
depends continuously on $x$ and $t$, there exists a positive quantity $\deltaHamBis$, not larger than $\deltaHam$, and a sequence $(x_n,t_n)_{n\in\nn}$ in $\rr\times[0,+\infty)$ such that $t_n$ goes to $+\infty$ as $n$ goes to $+\infty$ and such that, for every $n$ in $\nn$, 
\begin{align}
\label{bd_H_in_proof_H_small}
\sup_{x\in\rr}\abs{H^\ddag\bigl(u(x,t_n),u_x(x,t_n)\bigr)} &\le \deltaHam
\,, \\
\nonumber
\text{and}\quad
\abs{H^\ddag\bigl(u(x_n,t_n),u_x(x_n,t_n)\bigr)} &= \deltaHamBis
\,.
\end{align}
According to \cref{lem:compactness,lem:ut_small}, up to extracting a subsequence, it may be assumed that the functions $\xi\mapsto u(x_n+\xi,t_n)$ converge, uniformly on every compact subset of $\rr$, towards a stationary solution 
$\xi\mapsto u_\infty(\xi)$ of system~\cref{init_syst}, satisfying
\[
\abs{H^\ddag\bigl(u_\infty(\cdot),u_\infty'(\cdot)\bigr)} = \deltaHamBis \not= 0
\,.
\]
Since the Hamiltonian of this stationary solution is nonzero, this solution cannot be in $\PhiZero(\valueOfV)$ and thus, according to \vref{prop:infin_lag} (this is the key argument of this proof), 
\begin{equation}
\label{use_Lag_infty}
\int_{-\ell}^\ell L^\ddag\bigl(u_\infty(\xi),u_\infty'(\xi)\bigr)\, d\xi \to +\infty
\quad\text{as}\quad
\ell\to +\infty
\,.
\end{equation}
Let $n$ denote a nonnegative integer. It follows from assertions \cref{u_close_min,ux_small} about the behaviour of the solution outside of the interval $[-c t,c t]$ that, if $n$ is large enough,
\[
-c t_n \le x_n-\ell
\quad\text{and}\quad
x_n + \ell \le c t_n
\,.
\]
Thus, if $\Sigma_n$ denotes the set 
\[
(-\infty, x_n-\ell]\cup [x_n+\ell, +\infty)
\,,
\]
then the energy $\eee(t_n)$ defined in \cref{subsec:localized_energy} reads
\[
\int_{x_n-\ell}^{x_n+\ell} L^\ddag\bigl(u(x,t_n),u_x(x,t_n)\bigr)\, dx + \int_{\Sigma_n} \chi(x,t_n) \, L^\ddag\bigl(u(x,t_n),u_x(x,t_n)\bigr)\, dx 
\,.
\]
According to \cref{bd_H_in_proof_H_small} and to \cref{lem:contrib_Lag} above, the second of these integrals is nonnegative, and according to the limit \cref{use_Lag_infty} above, the first of these integrals is positive and arbitrarily large if $n$ is large enough (depending on the choice of $\ell$), a contradiction with the fact that the (almost decreasing) quantity $\eee(t)$ is bounded from above, uniformly with respect to $t$. 
\end{proof}
It follows from \cref{lem:contrib_Lag,lem:H_small} that the asymptotic energy of the solution is nonnegative (provided that this asymptotic energy is not equal to $-\infty$).
\begin{remark}
If the diffusion matrix $\ddd$ is the identity matrix, the fact that the asymptotic energy is either nonnegative or equal to minus infinity can be proved by another method (different from the one of \cref{subsec:app_zero_Ham_all_times}), namely by setting up a variational scheme in a referential travelling at a small nonzero speed, see \cite[\GlobalBehaviourPropNonNegativeAsymptoticEnergy]{Risler_globalBehaviour_2016}. 
\end{remark}
\subsection{Approach to the set of bistable stationary solutions in the normalized Hamiltonian level set zero}
\label{subsec:app_bist}
The following lemma completes the proof of conclusion \cref{item:thm_main_nonnegative_asympt_energy_approach_bistable_stationary} of \cref{thm:main}.
\begin{lemma}[approach to bistable stationary solutions in the normalized Hamiltonian level set zero]
\label{lem:app_bist}
The following limit holds.
\[
\sup_{x\in\rr}\dist\Bigl(\bigl(u(x,t),u_x(x,t)\bigr) \,, \,I\bigl(\PhiZero(\valueOfV)\bigr)\Bigr)\to 0
\quad\text{as}\quad
t\to +\infty
\,.
\]
\end{lemma}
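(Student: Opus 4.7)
The plan is to argue by contradiction, combining the compactness statement \cref{lem:compact} with the uniform smallness of the Hamiltonian provided by \cref{lem:H_small} and the characterization of $\statsol(\mmm_0)$ via the Lagrangian integral in \vref{prop:infin_lag}. Suppose, for contradiction, that there exist a positive quantity $\eta$ and a sequence $(x_p,t_p)_{p\in\nn}$ in $\rr\times[0,+\infty)$ with $t_p\to+\infty$ such that
\[
\dist\Bigl(\bigl(u(x_p,t_p),u_x(x_p,t_p)\bigr)\,,\,I\bigl(\statsol(\mmm_0)\bigr)\Bigr)\ge\eta
\quad\mbox{for every }p\in\nn.
\]
By \cref{lem:compact}, after passing to a subsequence, the translates $x\mapsto u(x_p+x,t_p)$ converge in $\ccc^k_{\mathrm{loc}}(\rr,\rr^n)$ to a stationary solution $u_\infty$ of system~\cref{init_syst}, and the pair $\bigl(u(x_p,t_p),u_x(x_p,t_p)\bigr)$ converges to $\bigl(u_\infty(0),u_\infty'(0)\bigr)\in I(u_\infty)$. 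It therefore suffices to prove that $u_\infty\in\statsol(\mmm_0)$.

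First, I dispose of the boundary-layer regimes. If, along a further subsequence, $x_p-\cnoinv t_p$ remains bounded from below by some real constant, then for every $y$ in a half-line $[y_0,+\infty)$ the point $x_p+y$ eventually exceeds $\cnoinv t_p$, and the exponential estimates \cref{u_close_min,ux_uxx_ut_small} imply that $u(x_p+y,t_p)$ converges to $m_+$. Hence $u_\infty$ coincides with $m_+$ on a nondegenerate interval and, by uniqueness for the Hamiltonian flow~\cref{ham_ord_1}, on the whole of $\rr$; the constant profile $m_+$ belongs to $\statsol(\mmm_0)$. The symmetric case, where $x_p+\cnoinv t_p$ is bounded from above, similarly yields $u_\infty\equiv m_-\in\statsol(\mmm_0)$. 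I may therefore assume henceforth that the sequence is genuinely interior, namely $\cnoinv t_p-\abs{x_p}\to+\infty$.

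In this interior regime I bound $\int_{-\ell}^\ell L\bigl(u_\infty(y),u_\infty'(y)\bigr)\,dy$ uniformly in $\ell$. Fix $\ell>0$; for $p$ large enough, the window $[x_p-\ell,x_p+\ell]$ is contained in $[-\cnoinv t_p,\cnoinv t_p]$, where $\chi(\cdot,t_p)\equiv1$. By \cref{lem:H_small}, for $p$ sufficiently large $\abs{H(u(x,t_p),u_x(x,t_p))}\le\deltaHam$ for every $x$, so \cref{lem:contrib_Lag} ensures that every unit-length subwindow of $[-\cnoinv t_p,\cnoinv t_p]\setminus[x_p-\ell,x_p+\ell]$ contributes nonnegatively to $\int L\,dx$. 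In the remaining tails $\abs{x}\ge\cnoinv t_p$, the inequality $\coeffenergy V(u)+(u-m_\pm)^2/4\ge0$ together with the exponential estimates \cref{u_close_min,ux_uxx_ut_small} ensures that the weighted contribution $\int\chi L\,dx$ is bounded below by a quantity of order $-\exp(-2\nu t_p)$. Combining these facts with the finiteness of the asymptotic energy gives
\[
\int_{x_p-\ell}^{x_p+\ell}L\bigl(u(x,t_p),u_x(x,t_p)\bigr)\,dx\le\eee(t_p)+o(1)
\quad\mbox{as }p\to+\infty,
\]
whence $\int_{-\ell}^{\ell}L(u_\infty,u_\infty')\,dy\le\eee_\infty[u_0]$. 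Letting $\ell\to+\infty$ yields $\int_\rr L(u_\infty,u_\infty')\,dy<+\infty$.

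By the contrapositive of \vref{prop:infin_lag}, this bound forces $u_\infty\in\statsol(\mmm_0)$, which contradicts the standing assumption and finishes the proof. The main obstacle in this plan is the uniform bookkeeping of the Lagrangian over the translating window: one must verify that the portions lying outside $[x_p-\ell,x_p+\ell]$, where $L$ has no definite sign a priori, contribute nonnegatively up to an exponentially small correction. This relies on the pointwise positivity of windowed Lagrangians given by \cref{lem:contrib_Lag} (itself a consequence of $\sup_x\abs{H}\to0$) combined with the firewall estimates from \cref{sec:spat_asympt} controlling $u$ near $m_\pm$ in the invading-equilibrium tails.
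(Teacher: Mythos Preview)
Your proof is correct and follows essentially the same route as the paper's: contradiction via \cref{lem:compact}, zero Hamiltonian of the limit via \cref{lem:H_small}, and the energy bookkeeping that pits the finite localized energy $\eee(t_p)$ against the unbounded Lagrangian of a non-bistable stationary solution from \cref{prop:infin_lag}, with \cref{lem:contrib_Lag} ensuring nonnegativity of the complementary portion. The only cosmetic difference is that you phrase the final step as the contrapositive (finite Lagrangian forces $u_\infty\in\statsol(\mmm_0)$) and handle the boundary-layer case $\cnoinv t_p-\abs{x_p}\not\to\infty$ explicitly, whereas the paper dispatches it in one line by noting that near the tails $\bigl(u,u_x\bigr)$ is already close to $(m_\pm,0)\in I\bigl(\statsol(\mmm_0)\bigr)$.
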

\begin{proof}
Let us proceed by contradiction and assume that the converse is true. Then there exists a positive quantity $\delta$ and a sequence $(x_n,t_n)_{n\in\nn}$ in $\rr\times[0,+\infty)$ such that $t_n$ goes to $+\infty$ as $n$ goes to $+\infty$ and such that, for every nonnegative integer $n$, 
\begin{equation}
\label{hyp_far_bist}
\dist\Bigl(\bigl(u(x_n,t_n),u_x(x_n,t_n)\bigr) \,, \,I\bigl(\PhiZero(\valueOfV)\bigr)\Bigr)\ge\delta
\,.
\end{equation}
According to \cref{lem:compactness,lem:ut_small}, up to extracting a subsequence, it may be assumed that the functions $\xi\mapsto u(x_n+\xi,t_n)$ converge, uniformly on every compact subset of $\rr$, towards a stationary solution 
$\xi\mapsto u_\infty(\xi)$ of system~\cref{init_syst}. According to \cref{lem:H_small}, the normalized Hamiltonian of this stationary solution must be equal to zero, and according to hypothesis \cref{hyp_far_bist} above this stationary solution cannot belong to the set $\PhiZero(\valueOfV)$. As a consequence, according to \vref{prop:infin_lag} (this is again the key argument of this proof), 
\[
\int_{-\ell}^\ell L^\ddag\bigl(u_\infty(\xi),u_\infty'(\xi)\bigr)\, d\xi \to +\infty
\quad\text{as}\quad
\ell\to +\infty
\,.
\]
Thus, if $\ell$ is a large enough positive quantity, and if $n$ is a large enough (depending on the choice of $\ell$) positive integer, the quantity
\[
\int_{x_n-\ell}^{x_n+\ell} L^\ddag\bigl(u(x,t_n),u_x(x,t_n)\bigr)\, dx 
\]
is positive and arbitrarily large, and the contradiction is the same as in the proof of \cref{lem:H_small} stated previously.
\end{proof}
The proof of conclusion \cref{item:thm_main_nonnegative_asympt_energy_approach_bistable_stationary} of \cref{thm:main} is complete. 
\section{Convergence towards a standing terrace of bistable stationary solutions and value of the asymptotic energy}
\label{sec:cv_standing_terrace_value_asympt_en}
The aim of this \namecref{sec:approach_set_bist_sol} is to prove conclusion \cref{item:thm_main_approach_standing_terrace_and_value_asymptotic_energy} of \cref{thm:main}. Let us keep the assumptions and notation of the previous \namecref{sec:approach_set_bist_sol}, and let us assume in addition that the potential $V$ satisfies hypothesis \textup{(\hyperlink{hypDiscStat}{\hypDiscStatRef{\valueOfV}})}, namely that the set $\PhiZeroNorm(\valueOfV)$ is totally disconnected in $X$. To emphasize the analogy with the convergence towards fronts travelling at a positive speed (see \cite{Risler_globalBehaviour_2016}), the standing terrace of bistable stationary solutions approached by the solution will be obtained ``from right to left'' (as a consequence the numbering of the stationary solutions involved in this terrace will be opposite to the one of \vref{def:standing_terrace}).
\subsection{Convergence towards a standing terrace of bistable stationary solutions}
\label{subsec:cv_standing_terrace}
The aim of this \namecref{subsec:cv_standing_terrace} is to prove the following proposition, which is nothing but conclusion \cref{item:thm_main_approach_standing_terrace} of \cref{thm:main}. 
\begin{proposition}[convergence towards a standing terrace of bistable stationary solutions]
\label{prop:approach_terrace_Czero_norm}
There exists a standing terrace $(x,t)\mapsto \ttt(x,t)$ of bistable stationary solutions, connecting $m_-$ to $m_+$, such that 
\begin{equation}
\label{approach_terrace_Czero_norm}
\sup_{x\in\rr}\abs{u(x,t)-\ttt(x,t)} \to 0
\quad\text{as}\quad
t\to+\infty
\,,
\end{equation}
\end{proposition}
\begin{proof}
\renewcommand{\qedsymbol}{}
Let us denote by $\xEscOf{1}{+}(t)$ the quantity $\xEscPlus(t)$ defined in \cref{subsec:invasion_speeds_vanish}. Observe that, if alternative \cref{item:alternative_xEscPlus_equals_minus_infinity_for_t_large} of \cref{cor:finitenes_infiniteness_of_xEscPlus_dichotomy} holds (that is, if $\xEscOf{1}{+}(t)$ is equal to $-\infty$ for every large enough positive time $t$), then, according to \cref{lem:cv_homog_case}, \cref{prop:approach_terrace_Czero_norm} holds (with a standing terrace reduced to the homogeneous solution at $m_-$, which is equal to $m_+$). Thus it may be assumed that alternative \cref{item:alternative_xEscPlus_is_finite_for_t_large} of \cref{cor:finitenes_infiniteness_of_xEscPlus_dichotomy} holds (that is, $\xEscOf{1}{+}(t)$ is finite for every large enough positive time $t$). Recall that, in this case, according to \cref{lem:asymptotically_vanishing_time_derivative_xEscPlus}, 
\begin{equation}
\label{xEscOnePlusPrime_of_t_goes_to_zero}
\xEscOf{1}{+}'(t)\to 0
\quad\text{as}\quad
t\to+\infty
\,.
\end{equation}
The next lemma (and the repetition of the same argument if the number of stationary solutions involved in the asymptotic pattern is larger than $1$) is the only place in this paper where hypothesis \textup{(\hyperlink{hypDiscStat}{\hypDiscStatRef{\valueOfV}})} is required. For an illustration see \cref{fig:value_asymptotic_energy}.
\end{proof}
\begin{lemma}[approach to an inhomogeneous stationary solution]
\label{lem:cv_inhomog_case}
There exists a stationary solution $\phi_1$ in the set $\PhiZeroNorm(\valueOfV)$ such that $\phi_1(\xi)$ goes to $m_+$ as $\xi$ goes to $+\infty$, and such that the functions
\[
\rr\to\rr^d,\quad \xi\mapsto u\bigl(\xEscOf{1}{+}(t)+\xi,t\bigr)
\]
converge, uniformly on every compact subset of $\rr$, towards $\phi_1$ as time goes to $+\infty$. 
\end{lemma}
\begin{proof}[Proof of \cref{lem:cv_inhomog_case}]
Take a sequence $(t_n)_{n\in\nn}$, such that $t_n$ goes to $+\infty$ as $n$ goes to $+\infty$. According to \cref{lem:compactness,lem:ut_small}, up to extracting a subsequence, it may be assumed that the functions 
\[
\xi\mapsto u\bigl(\xEscOf{1}{+}(t_n)+\xi,t_n\bigr)
\]
converge, uniformly on every compact subset of $\rr$, towards a stationary solution $\xi\mapsto \phi_1(\xi)$ of system~\cref{init_syst}. It follows from the definition of $\xEscOf{1}{+}(t)$ that 
\[
\abs{\phi_1(0)-m_+}_{\ddd} =\dEsc(m_+)
\quad\text{and, for all } \xi \text{ in } [0,+\infty), \quad
\abs{\phi_1(\xi)-m_+}_{\ddd} \le\dEsc(m_+)
\,.
\]
Thus, it follows from assertions \cref{item:cv_spatial_asymptotics_sw,item:closer_spatial_asymptotics_sw} of \vref{lem:ham_equ_hyp} that
\[
\phi_1(\xi)\to m_+
\quad\text{as}\quad
\xi\to+\infty
\quad\text{and, for all } \xi \text{ in } (0,+\infty), \quad
\abs{\phi_1(\xi)-m_+}_{\ddd} <\dEsc(m_+)
\,,
\]
and according to \cref{lem:app_bist}, this stationary solution $\phi_1$ must actually belong to $\PhiZeroNorm(\valueOfV)$. 

Let $\mathcal{L}$ denote the set of all possible limits (in the sense of uniform convergence on compact subsets of $\rr$) of sequences of functions 
\[
\xi\mapsto u\bigl(\xEscOf{1}{+}(t'_n)+\xi,t'_n\bigr)
\]
for all possible sequences $(t_n')_{n\in\nn}$ such that $t_n'$ goes to $+\infty$ as $n$ goes to $+\infty$. This set $\mathcal{L}$ is included in the set $\PhiZeroNorm(\valueOfV)$ defined in \vref{def_phiZeronorm_of_mmm_h}, and, because the semi-flow of system~\cref{init_syst} is continuous on $X$, this set $\mathcal{L}$ is a continuum (a compact connected subset) of $X$. 

Since on the other hand, according to hypothesis \textup{(\hyperlink{hypDiscStat}{\hypDiscStatRef{\valueOfV}})}, the set $\PhiZeroNorm(\valueOfV)$ is totally disconnected, this set $\mathcal{L}$ must actually be reduced to the singleton $\{\phi_1\}$. \Cref{lem:cv_inhomog_case} is proved. 
\end{proof}
\begin{proof}[End of the proof of \cref{prop:approach_terrace_Czero_norm}]
With the notation $\phi_1$ introduced in this \cref{lem:cv_inhomog_case}, let us denote by $m_1$ the limit of $\phi_1(\xi)$ as $\xi$ goes to $-\infty$ (this point must belong to $\mmm_{\valueOfV}$), and let $L_1$ denote the infimum of the (nonempty) set 
\[
\bigl\{\xi\in\rr: \abs{\phi_1(\xi)-m_{1}}_{\ddd} =\dEsc(m_1)\bigr\}
\,.
\]
According to assertion \cref{item:transv_spatial_asymptotics_sw} of \vref{lem:ham_equ_hyp}, 
\[
\bigl\langle \phi_1(L_1)-m_{1},\phi_1'(L_1)\bigr\rangle_{\ddd} >0
\,.
\]
As a consequence, for every large enough positive time $t$ there exists a unique quantity $\xEscOf{1}{-}(t)$ close to $\xEscOf{1}{+}(t)-L_1$ and such that 
\[
\abs{u\bigl(\xEscOf{1}{-}(t),t\bigr)-m_{1}}_{\ddd} =\dEsc(m_1)
\,,
\]
and, as for $\xEscOf{1}{+}(t)$ and for the same reason, 
\begin{equation}
\label{xEscOneMinusPrime_of_t_goes_to_zero}
\xEscOf{1}{-}'(t)\to 0 
\quad\text{as}\quad
t\to+\infty
\,,
\end{equation}
see \cref{fig:value_asymptotic_energy}.
Let us repeat to the left of $\xEscOf{1}{-}(t)$ the same construction and let us denote by $\xEscOf{2}{-}(t)$ the supremum of the set
\[
\Bigl\{x\text{ in } \bigl(-\infty,\xEscOf{1}{-}(t)\bigl) : \abs{u(x,t)-m_{1}}_{\ddd} =\dEsc(m_1)\Bigr\}
\]
(with the convention that $\xEscOf{2}{-}(t)=-\infty$ if this set is empty). It follows from \cref{lem:cv_inhomog_case} that 
\begin{equation}
\label{xEscOneMinus_Minus_xEscTwoPlus_goes_to_infinity}
\xEscOf{1}{-}(t)-\xEscOf{2}{+}(t)\to+\infty
\quad\text{as}\quad 
t\to+\infty 
\,.
\end{equation}
At this stage, it can be observed that \cref{cor:finitenes_infiniteness_of_xEscPlus_dichotomy} applies again, with $\xEscPlus(t)$ replaced by $\xEscOf{2}{+}(t)$ and $m_+$ replaced by $m_1$. Thus, the same alternative holds again: for $t$ large enough positive, the quantity $\xEscOf{2}{+}(t)$ is either always finite or always equal to $-\infty$. In addition, 
\begin{enumerate}
\item if $\xEscOf{2}{+}(t)$ equals $-\infty$ for all $t$ large enough positive, then, since the solution under consideration is assumed to be a bistable solution connecting $m_-$ to $m_+$, it follows from the definition of $\dEsc(m_1)$ that $m_1$ must be equal to $m_-$;
\item and if $\xEscOf{2}{+}(t)$ is finite for all $t$ large enough positive, then it can be argued as in \cref{lem:asymptotically_vanishing_time_derivative_xEscPlus} that $\xEscOf{2}{+}(t)$ goes to $0$ as time goes to $+\infty$, and as in \cref{lem:cv_inhomog_case} that there exists $\phi_2$ in $\PhiZeroNorm(\valueOfV)$ such that the function $\xi\mapsto u\bigl(\xEscOf{2}{+}(t)+\xi,t\bigr)$ converges towards $\phi_2$ on every compact subset of $\rr$ as $t$ goes to $+\infty$. And the same construction can be repeated again introducing the supremum $\xEscOf{3}{+}(t)$ of the set 
\[
\Bigl\{x \text{ in } \bigl(-\infty,\xEscOf{2}{-}(t)\bigr) : \abs{u(x,t)-m_{2}}_{\ddd} \ge\dEsc(m_2)\Bigr\} 
\,,
\]
where $m_2$ is the limit of $\phi_2(\xi)$ as $\xi$ goes to $-\infty$, and $\xEscOf{2}{-}(t)$ is defined as $\xEscOf{1}{-}(t)$ above. 
\end{enumerate}
Because the localized energy $\eee(t)$ is bounded from above, this process must eventually end up at some $q$ in $\nn^*$ for which $\xEscOf{q+1}{+}(t)$ equals $+\infty$ for all $t$ large enough positive. Then the limit $m_{q}$ at $-\infty$ of the last stationary solution $\phi_q$ must be equal to $m_-$ (see \cref{fig:value_asymptotic_energy}). For every $i$ in $\{1,\dots,q\}$, the limits \cref{xEscOnePlusPrime_of_t_goes_to_zero,xEscOneMinusPrime_of_t_goes_to_zero} still hold, for the same reason, if the index ``$1$'' is replaced by ``$i$'', that is:
\begin{equation}
\label{xEsciPlusMinusPrime_of_t_goes_to_zero}
\xEscOf{i}{+}'(t)\to 0 
\quad\text{as}\quad
t\to+\infty\,,
\quad\text{and}\quad
\xEscOf{i}{-}'(t)\to 0 
\quad\text{as}\quad
t\to+\infty
\,.
\end{equation}
And, if $q$ is not smaller than $2$, for every $i$ in $\{1,\dots,q-1\}$, due to the same reason as the limit \cref{xEscOneMinus_Minus_xEscTwoPlus_goes_to_infinity}, the following limit holds:
\begin{equation}
\label{xEsciPlusOneMinus_Minus_xEsciPlus_goes_to_infinity}
\xEscOf{i+1}{-}(t)-\xEscOf{i}{+}(t)\to+\infty
\quad\text{as}\quad 
t\to+\infty 
\,.
\end{equation}
Let us denote $m_+$ by $m_0$ and, for $t$ large enough positive, let us introduce the standing terrace $\ttt(\cdot,t)$ defined as 
\[
\ttt(x,t) = m_+ + \sum_{i=1}^q \Bigl[\phi_i\bigl(x-\xEscOf{i}{+}(t)\bigr)-m_{i-1}\Bigr]
\,.
\]
For every positive quantity $L$, it follows from the limits \cref{xEsciPlusOneMinus_Minus_xEsciPlus_goes_to_infinity} and from the asymptotics of $\phi_1,\dots,\phi_q$ at the ends of $\rr$ that, for every $i$ in $\{1,\dots,q\}$, 
\[
\sup_{x\in[\xEscOf{i}{-}(t)-L,\, \xEscOf{i}{+}(t)+L]}\abs{\ttt(x,t)-\phi_i\bigl(x-\xEscOf{i}{+}(t)\bigr)}\to0
\quad\text{as}\quad 
t\to+\infty 
\,.
\]
On the other hand, it follows from the construction of the profile $\phi_i$ that
\begin{equation}
\label{approach_phii_around_xEscPlusMinusi}
\sup_{x\in[\xEscOf{i}{-}(t)-L,\, \xEscOf{i}{+}(t)+L]}\abs{u(x,t)-\phi_i\bigl(x-\xEscOf{i}{+}(t)\bigr)}\to0
\quad\text{as}\quad 
t\to+\infty 
\,.
\end{equation}
Thus, according to these two limits, 
\[
\sup_{i\in\{1,\dots,q\},\ x\in[\xEscOf{i}{-}(t)-L,\xEscOf{i}{+}(t)+L]}\abs{u(x,t)-\ttt(x,t)}\to0
\quad\text{as}\quad 
t\to+\infty 
\,.
\]
Finally, by the same argument as in the proof of \cref{lem:cv_homog_case}, the stronger limit \cref{approach_terrace_Czero_norm} must also hold, and \cref{prop:approach_terrace_Czero_norm} is proved. 
\end{proof}
Conclusion \cref{item:thm_main_approach_standing_terrace} of \cref{thm:main} is therefore also proved.
\subsection{Value of the asymptotic energy}
\label{subsec:value_asymptotic_energy}
Recall (\cref{def:energy_standing_terrace}) that the energy of the standing terrace $\ttt$ is the quantity $\eee[\ttt]$ defined as
\[
\eee[\ttt] = 0
\quad\text{if}\quad
q = 0 \,,
\quad\text{and}\quad
\eee[\ttt] = \sum_{i=1}^q \eee[\phi_i]
\quad\text{if}\quad
q > 0
\,.
\]
The aim of this \namecref{subsec:value_asymptotic_energy} is to prove the following proposition, which yields conclusion \cref{item:thm_main_value_asymptotic_energy} of \cref{thm:main}. 
\begin{proposition}[the asymptotic energy of the solution equals the energy of the standing terrace]
\label{prop:eeeAsympt_of_u_equals_eee_of_ttt}
The asymptotic energy of the solution equals the energy of the standing terrace. That is, with symbols, 
\begin{equation}
\label{eeeAsympt_of_u_equals_eee_of_ttt}
\eeeAsympt[u] = \eee[\ttt]
\,.
\end{equation}
\end{proposition}
\begin{proof}
\renewcommand{\qedsymbol}{}
Let $\varepsilon$ denote a (small) positive quantity, to be chosen later (its value is chosen in \cref{def_varepsilon} and depends only on $m_0,\dots,m_q$). According to \cref{lem:convergence_to_asympt_enery_for_various_speeds_of_bounds}, with the notation $E^\ddag(x,t)$ of \cref{def_Eddag}, the following limit holds:
\begin{equation}
\label{cv_asympt_energy_to_get_its_value}
\int_{-\varepsilon t}^{\varepsilon t}E^\ddag(x,t)\, dx \to \eeeAsympt[u]
\quad\text{as}\quad
t\to+\infty
\,.
\end{equation}
Let us adopt the following conventions:
\begin{itemize}
\item the set $\{1,\dots,q\}$ denotes the empty set if $q$ equals zero, 
\item and the set $\{1,\dots,q-1\}$ denotes the empty set if $q$ is not larger than $1$, 
\item and the supremum of any expression over an empty set equals $-\infty$, 
\item and the infimum of any expression over an empty set equals $+\infty$.
\end{itemize}
Let $t_0$ denote a positive time, large enough so that, for every $i$ in $\{1,\dots,q\}$, the functions $\xEscOf{i}{-}(t)$ and $\xEscOf{i}{+}(t)$ are defined and of class $\ccc^1$. For every $i$ in $\{1,\dots,q\}$, let us introduce the function $\rr\times[t_0,+\infty)\to\rr$, $(x,t)\mapsto E^\ddag_i(x,t)$ defined as 
\[
E^\ddag_i(x,t) = \frac{1}{2}\abs{\phi_i'\bigl(x-\xEscOf{i}{+}(t)\bigr)}_{\ddd}^2 + V^\ddag\Bigl(\phi_i\bigl(x-\xEscOf{i}{+}(t)\bigr)\Bigr)
\,.
\]
For every positive quantity $L$, according to the bounds \cref{bound_u_ut_ck} on the solution, it follows from the limit \cref{approach_phii_around_xEscPlusMinusi} that
\begin{equation}
\label{approach_phii_around_xEscPlusMinusi_derivatives}
\sup_{x\in[\xEscOf{i}{-}(t)-L,\,\xEscOf{i}{+}(t)+L]}\abs{u_x(x,t)-\phi_i'\bigl(x-\xEscOf{i}{+}(t)\bigr)} \to 0
\quad\text{as}\quad
t\to+\infty
\,,
\end{equation}
and it follows from the limits \cref{approach_phii_around_xEscPlusMinusi,approach_phii_around_xEscPlusMinusi_derivatives} that 
\begin{equation}
\label{integral_of_E_minus_Ei_goes_to_zero}
\int_{\xEscOf{i}{-}(t)-L}^{\xEscOf{i}{+}(t)+L} \bigl(E^\ddag(x,t) - E^\ddag_i(x,t)\bigr)\, dx \to 0
\quad\text{as}\quad
t\to+\infty
\,.
\end{equation}
For every positive integer $n$, let us denote by $\tau_n$ the supremum of the set 
\begin{align}
\nonumber
\biggl\{t\in[t_0,+\infty):& \text{ at least one among the following two inequalities holds: }\\
&\sup_{i\in\{1,\dots,q\}}\int_{\xEscOf{i}{-}(t)-n}^{\xEscOf{i}{+}(t)+n} \abs{E^\ddag(x,t) - E^\ddag_i(x,t)}\, dx \ge \frac{1}{n}\,, 
\label{energies_on_bulk_of_width_2n_differs}\\
\text{and}\quad&
\inf_{i\in\{1,\dots,q-1\}} \xEscOf{i+1}{-}(t) - \xEscOf{i}{+}(t) \le 2n
\biggr\}
\label{gap_is_smaller_than_2n}
\,.
\end{align}
According to the limits \cref{xEsciPlusOneMinus_Minus_xEsciPlus_goes_to_infinity,integral_of_E_minus_Ei_goes_to_zero}, this quantity $\tau_n$ is finite. And according to this definition, for every time $t$ greater than or equal to $\tau_n$, both inequalities \cref{energies_on_bulk_of_width_2n_differs,gap_is_smaller_than_2n} are false. Let us introduce the sequence $(t_n)_{n\in\nn}$ starting at the time $t_0$ introduced above and defined, for every positive integer $n$, by the recurrence relation: 
\[
t_n = \max(t_{n-1} + n,\tau_n)
\,.
\]
Let $\chi$ denote a smooth function $\rr\to\rr$ satisfying
\[
\chi\equiv 0 \text{ on } (-\infty,0]
\quad\text{and}\quad
\chi\equiv 1 \text{ on } [1,+\infty)
\quad\text{and}\quad
0 \le \chi \le 1 \text{ and }\chi' \ge 0\text{ on }[0,1]
\,,
\]
and let us introduce the function $\Lext:[t_0,+\infty)\to[0,+\infty)$ defined as
\[
\Lext(t) = n-1 + \chi\left(\frac{t-t_{n-1}}{t_{n}-t_{n-1}}\right) 
\quad\text{for $n$ in $\nn^*$ and $t$ in $[t_{n-1},t_{n}]$}
\,.
\]
This function will play the role of a (growing) ``extent length'' of the intervals over which the solution gets close (in terms of energy) to translates of the stationary solutions $\phi_i$ (accordingly, the subscript ``ext'' refers to the word ``extent''). 
\begin{figure}[!htbp]
	\centering
    \includegraphics[width=\textwidth]{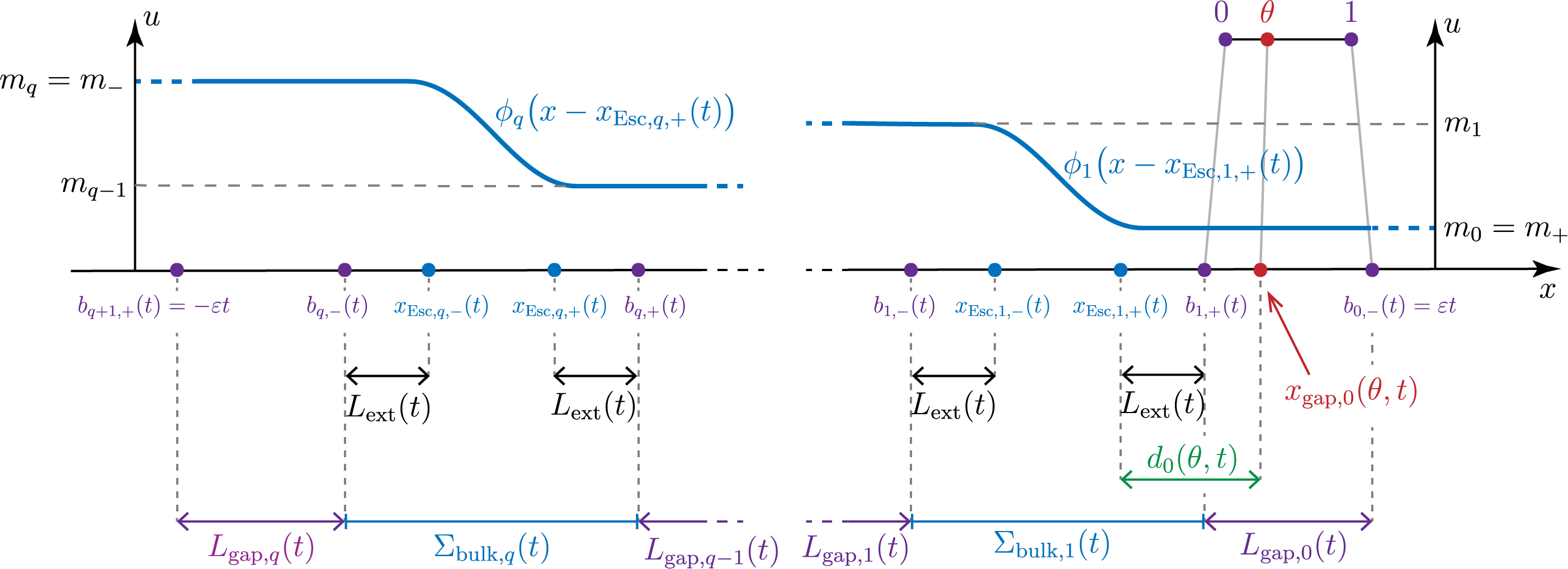}
    \caption{Illustration of the notation of \cref{subsec:value_asymptotic_energy}.}
    \label{fig:value_asymptotic_energy}
\end{figure}
It follows from this definition that this function $\Lext(\cdot)$ is non-decreasing and of class $\ccc^1$ on $[t_0,+\infty)$, and that 
\begin{equation}
\label{asymptotics_of_Lext_and_Lextprime}
\Lext(t)\to+\infty
\quad\text{and}\quad
\Lext'(t)\to0
\quad\text{as}\quad
t\to+\infty
\,.
\end{equation}
For every $i$ in $\{1,\dots,q\}$, let 
\[
b_{i,-}(t) = \xEscOf{i}{-}(t)-\Lext(t)
\quad\text{and}\quad
b_{i,+}(t) = \xEscOf{i}{+}(t)+\Lext(t)
\]
(this notation refers to the word ``border''), see \cref{fig:value_asymptotic_energy}, and let us denote by $\SigmaBulkOf{i}(t)$ the interval $[b_{i,-}(t),b_{i,+}(t)]$. It follows from the definition of $\tau_n$ that, for every $i$ in $\{1,\dots,q\}$,
\begin{equation}
\label{integral_of_E_minus_Ei_goes_to_zero_on_growing_interval}
\int_{\SigmaBulkOf{i}(t)} \bigl(E^\ddag(x,t) - E^\ddag_i(x,t)\bigr)\, dx \to 0 
\quad\text{as}\quad
t\to+\infty
\,,
\end{equation}
and, for every $i$ in $\{1,\dots,q-1\}$,
\begin{equation}
\label{xEsciPlusone_minus_xEsci_no_overlap}
\xEscOf{i}{+}(t)+\Lext(t)\le \xEscOf{i+1}{-}(t)-\Lext(t)
\,,
\end{equation}
and it follows from the limits \cref{xEsciPlusMinusPrime_of_t_goes_to_zero,asymptotics_of_Lext_and_Lextprime} that, as $t$ goes to $+\infty$, 
\begin{equation}
\label{energy_on_SigmaBulk_i_goes_to_energy_of_phi_i}
b_{i,-}'(t)\to0
\quad\text{and}\quad
b_{i,+}'(t)\to0
\quad\text{and}\quad
\int_{\SigmaBulkOf{i}(t)} E^\ddag_i(x,t)\, dx \to \eee[\phi_i]
\,.
\end{equation}
Let 
\[
\SigmaBulk(t) = \bigcup_{i\in\{1,\dots,q\}} \SigmaBulkOf{i}(t)
\,.
\]
According to inequality \cref{xEsciPlusone_minus_xEsci_no_overlap}, this union is actually a \emph{disjoint} union, thus it follows from the limits \cref{integral_of_E_minus_Ei_goes_to_zero_on_growing_interval,energy_on_SigmaBulk_i_goes_to_energy_of_phi_i} that 
\[
\int_{\SigmaBulk(t)}E^\ddag(x,t)\, dx \to \eee[\ttt]
\quad\text{as}\quad
t\to+\infty
\,.
\]
In view of the limit \cref{cv_asympt_energy_to_get_its_value}, \cref{prop:eeeAsympt_of_u_equals_eee_of_ttt} is a consequence of the following lemma. 
\end{proof}
\begin{lemma}[the energy over the set $[-\varepsilon t,\varepsilon t{]}\setminus\SigmaBulk(t)$ goes to $0$]
\label{lem:no_energy_besides_SigmaBulk}
The following limit holds:
\begin{equation}
\label{no_energy_besides_SigmaBulk}
\int_{[-\varepsilon t,\varepsilon t]\setminus\SigmaBulk(t)}E^\ddag(x,t)\, dx \to 0
\quad\text{as}\quad
t\to+\infty
\,.
\end{equation}
\end{lemma}
\begin{proof}[Proof of \cref{lem:no_energy_besides_SigmaBulk}]
\renewcommand{\qedsymbol}{}
According to the first two limits of \cref{energy_on_SigmaBulk_i_goes_to_energy_of_phi_i}, there exists a time $t'_0$, greater than or equal to $t_0$, such that, for every time $t$ greater than or equal to $t'_0$, 
\[
-\varepsilon t < b_{q,-}(t)
\quad\text{and}\quad
b_{1,+}(t) < \varepsilon t
\,,
\]
see \cref{fig:value_asymptotic_energy}. Let us denote by $b_{q+1,+}(t)$ the quantity $-\varepsilon t$ and by $b_{0,-}(t)$ the quantity $\varepsilon t$, and let us assume that $t$ is greater than or equal to $t'_0$. Then the following equality holds: 
\[
(-\varepsilon t,\varepsilon t)\setminus\SigmaBulk(t) = \bigsqcup_{i=0}^q \bigl(b_{i+1,+}(t),b_{i,-}(t)\bigr)
\,.
\]
For every $i$ in $\{0,\dots,q\}$, let us introduce the quantity
\begin{equation}
\label{def_LgapOf_i}
\LgapOf{i}(t) = b_{i,-}(t)- b_{i+1,+}(t)
\,.
\end{equation}
Let us introduce the quantities $\kappa_i$ and $\nuFOf{i}$ and $\KFOf{i}$ and the set $\SigmaEscOf{i}(t)$ obtained by replacing, in the expression \cref{def_kappa} of $\kappa$ and \cref{def_nuF} of $\nuF$ and \cref{def_KF} of $\KF$ and \cref{def_SigmaEsc} of $\SigmaEsc(t)$, the minimum point $m$ considered in \cref{sec:spat_asympt} by $m_i$; and let us denote by $\psi_i$ the function obtained by replacing, in the expression \cref{def_psi} of $\psi$, the quantity $\kappa$ by $\kappa_i$. Let us introduce the functions $F^\ddag_i(\cdot,\cdot)$ and $\fff_i(\cdot,\cdot)$ defined as 
\[
F^\ddag_i(x,t) = E^\ddag(x,t) + \frac{1}{2}\bigl(u(x,t)-m_i\bigr)^2
\quad\text{and}\quad
\fff_i(\widebar{x},t) = \int_{\rr} T_{\widebar{x}}\psi_i(x) F^\ddag_i(x,t)\, dx
\,,
\]
and let us introduce the functions $\xGapOf{i}$ and $\gggg_i$, from $[0,1]\times[t_0,+\infty)$ to $\rr$, defined as
\[
\xGapOf{i}(\theta,t) = (1-\theta)b_{i+1,+}(t)+\theta b_{i,-}(t)
\quad\text{and}\quad
\gggg_i(\theta,t) = \fff_i\bigl(\xGapOf{i}(\theta,t),t\bigr)
\,.
\]
For every $(\theta,t)$ in $[0,1]\times[t'_0,+\infty)$, 
\[
\partial_t \gggg_i(\theta,t) = \partial_{\widebar{x}}\fff_i\bigl(\xGapOf{i}(\theta,t),t\bigr) \partial_t \xGapOf{i}(\theta,t) + \partial_t\fff_i\bigl(\xGapOf{i}(\theta,t),t\bigr)
\,.
\]
According to inequality \cref{dt_fire} of \cref{lem:approx_decrease_fire} (after substituting the notation used in this inequality with the notation above), 
\[
\begin{aligned}
\partial_t\fff_i\bigl(\xGapOf{i}(\theta,t),t\bigr) &\le -\nuFOf{i}\, \fff_i\bigl(\xGapOf{i}(\theta,t),t\bigr) + \KFOf{i}\, \int_{\SigmaEscOf{i}(t)} T_{\xGapOf{i}(\theta,t)}\psi_i(x)\, dx \\
\le &-\nuFOf{i}\, \fff_i\bigl(\xGapOf{i}(\theta,t),t\bigr) + \frac{2\KFOf{i}}{\kappa_i}\exp\Bigl(-\kappa_i \dist\bigl(\xGapOf{i}(\theta,t),\SigmaEscOf{i}(t)\bigr)\Bigr) 
\,,
\end{aligned}
\]
where $\dist\bigl(\xGapOf{i}(\theta,t),\SigmaEscOf{i}(t)\bigr)$ denotes the distance (in $\rr$) between the point $\xGapOf{i}(\theta,t)$ and the set $\SigmaEscOf{i}(t)$. Observe that this distance is not smaller than the quantity $d_i(\theta,t)$ defined as
\begin{equation}
\label{def_d_i}
d_i(\theta,t) = \Lext(t) + \min(\theta,1-\theta)\LgapOf{i}(t)
\,.
\end{equation}
As a consequence, it follows from the previous inequality that 
\[
\partial_t\fff_i\bigl(\xGapOf{i}(\theta,t),t\bigr) \le  -\nuFOf{i}\, \fff_i\bigl(\xGapOf{i}(\theta,t),t\bigr) + \frac{2\KFOf{i}}{\kappa_i}\exp\bigl(-\kappa_i d_i(\theta,t) \bigr) 
\,.
\]
Besides, 
\[
\abs{\partial_t \xGapOf{i}(\theta,t)}\le \max\left(\abs{b_{i+1,+}'(t)},\abs{b_{i,-}'(t)}\right)
\,,
\]
so that, according to the first two limits of \cref{energy_on_SigmaBulk_i_goes_to_energy_of_phi_i} and up to increasing $t'_0$, for $t$ greater than or equal to $t'_0$, 
\[
\abs{\partial_t \xGapOf{i}(\theta,t)}\le \varepsilon
\,.
\]
Besides, according to the definition of the weight function $\psi_i$,  
\[
\abs{\partial_{\widebar{x}}\fff_i\bigl(\xGapOf{i}(\theta,t),t\bigr)} \le \kappa_i \fff_i\bigl(\xGapOf{i}(\theta,t),t\bigr)
\,.
\]
It follows that, for $t$ greater than or equal to $t'_0$, 
\[
\partial_t \gggg_i(\theta,t) \le - (\nuFOf{i}-\varepsilon\kappa_i)\gggg_i(\theta,t) + \frac{2\KFOf{i}}{\kappa_i}\exp\bigl(-\kappa_i d_i(\theta,t) \bigr)
\,,
\]
so that if the quantity $\varepsilon$ is chosen as
\begin{equation}
\label{def_varepsilon}
\varepsilon = \min_{i\in\{0,\dots,q\}}\frac{\nuFOf{i}}{16\kappa_i}
\,,
\end{equation}
then the previous inequality yields
\begin{equation}
\label{upper_bound_partial_t_gggg_i}
\partial_t \gggg_i(\theta,t) \le - \frac{\nuFOf{i}}{2} \gggg_i(\theta,t) + \frac{2\KFOf{i}}{\kappa_i}\exp\bigl(-\kappa_i d_i(\theta,t) \bigr)
\,.
\end{equation}
At this stage, a factor $2$ instead of $16$ in the denominator of the right-hand side of \cref{def_varepsilon} would be sufficient for the previous inequality to hold, but this factor $16$ will turn out to be useful in the proof of the next lemma. The proof of \cref{lem:no_energy_besides_SigmaBulk} will be completed through the next three statements. 
\end{proof}
\begin{lemma}[upper bound on $\gggg_i(\theta,t)$ for $t$ large positive]
\label{lem:upper_bound_gggg_i}
There exists a time $t''_0$ greater than or equal to $t'_0$ such that, for every $i$ in $\{0,\dots,q\}$ and for every $\theta$ in $[0,1]$ and for every time $t$ greater than or equal to $t''_0$, 
\begin{equation}
\label{upper_bound_gggg_i}
\gggg_i(\theta,t) \le \frac{8\KFOf{i}}{\kappa_i\nuFOf{i}}\exp\bigl(-\kappa_i d_i(\theta,t) \bigr)
\,.
\end{equation}
\end{lemma}
\begin{proof}
For every $i$ in $\{0,\dots,q\}$, let us introduce the function $\hhh_i(\cdot,\cdot)$ defined as
\begin{equation}
\label{def_hhh_i}
\hhh_i(\theta,t) = \gggg_i(\theta,t) - \frac{8\KFOf{i}}{\kappa_i\nuFOf{i}}\exp\bigl(-\kappa_i d_i(\theta,t) \bigr)
\,.
\end{equation}
It follows from inequality \cref{upper_bound_partial_t_gggg_i} that, for every $\theta$ in $[0,1]$ and for every time $t$ greater than or equal to $t'_0$, 
\[
\begin{aligned} 
\partial_t\hhh_i(\theta,t) &= - \frac{\nuFOf{i}}{2}\gggg_i(\theta,t) + \frac{2\KFOf{i}}{\kappa_i}\exp\bigl(-\kappa_i d_i(\theta,t) \bigr) + \frac{8\KFOf{i}}{\nuFOf{i}}\partial_t d_i(\theta,t)\exp\bigl(-\kappa_i d_i(\theta,t) \bigr) \\
&\le - \frac{\nuFOf{i}}{2}\gggg_i(\theta,t) + \frac{2\KFOf{i}}{\kappa_i}\exp\bigl(-\kappa_i d_i(\theta,t) \bigr)\left(1 + \frac{4\kappa_i}{\nuFOf{i}}\partial_t d_i(\theta,t)\right) \\
&\le - \frac{\nuFOf{i}}{2}\hhh_i(\theta,t) - \frac{2\KFOf{i}}{\kappa_i}\exp\bigl(-\kappa_i d_i(\theta,t) \bigr)\left(1 -\frac{4\kappa_i}{\nuFOf{i}}\partial_t d_i(\theta,t)\right)
\,.
\end{aligned}
\]
According to the definition \cref{def_d_i} of $d_i(\theta,t)$ and \cref{def_LgapOf_i} of $\LgapOf{i}(t)$, 
\[
\partial_t d_i(\theta,t) = \Lext'(t) + \min(\theta,1-\theta)\bigl(b_{i,-}'(t)- b_{i+1,+}'(t)\bigr)
\,.
\]
Since $\Lext'(t)$ goes to $0$ as $t$ goes to $+\infty$, and since $b_{i,-}'(t)$ either goes to $0$ as $t$ goes to $+\infty$ or (if $i$ equals $0$) is equal to $\varepsilon$, and since $b_{i+1,+}'(t)$ either goes to $0$ as $t$ goes to $+\infty$ or (if $i+1$ equals $q+1$) is equal to $-\varepsilon$, there exists a time $t_0'''$, greater than or equal to $t_0'$, such that, if $t$ is greater than or equal to $t'''_0$, then (for every $i$ in $\{1,\dots,q\}$)
\[
\partial_t d_i(\theta,t) \le 2\varepsilon\,,
\quad\text{thus}\quad
\frac{4\kappa_i}{\nuFOf{i}}\partial_t d_i(\theta,t) \le\frac{1}{2}
\,,
\]
and as a consequence, 
\[
\begin{aligned}
\partial_t\hhh_i(\theta,t) &\le -\frac{\nuFOf{i}}{2}\hhh_i(\theta,t) - \frac{\KFOf{i}}{\kappa_i}\exp\bigl(-\kappa_i d_i(\theta,t) \bigr) \\
&\le -\frac{\nuFOf{i}}{2}\hhh_i(\theta,t)- \frac{\KFOf{i}}{\kappa_i}\exp\bigl(-\kappa_i d_i(\theta,t_0''')\bigr)\exp\bigl(-2 \varepsilon \kappa_i (t-t_0''')\bigr)
\,.
\end{aligned}
\]
Let us introduce the function $\jjj_i(\cdot,\cdot)$ defined as
\[
\jjj_i(\theta,t) = \hhh_i(\theta,t) \exp\bigl(2\varepsilon \kappa_i (t-t'''_0)\bigr)
\,.
\]
Then, for every time $t$ greater than or equal to $t'''_0$, 
\[
\begin{aligned}
\partial_t \jjj_i(\theta,t) &\le \Biggl(\biggl(-\frac{\nuFOf{i}}{2} + 2\varepsilon \kappa_i\biggr) \hhh_i(\theta,t) - \frac{\KFOf{i}}{\kappa_i}\exp\bigl(-\kappa_i d_i(\theta,t_0''')\bigr)\exp\bigl(-2 \varepsilon \kappa_i (t-t_0''')\bigr)\Biggr) \\
& \qquad \times \exp\bigl(2\varepsilon \kappa_i (t-t'''_0)\bigr) \\
&\le -\frac{\nuFOf{i}}{4} \jjj_i(\theta,t) - \frac{\KFOf{i}}{\kappa_i}\exp\bigl(-\kappa_i d_i(\theta,t_0''')\bigr) \\
&\le -\frac{\nuFOf{i}}{4} \jjj_i(\theta,t) - \frac{\KFOf{i}}{\kappa_i}\exp\bigl(-\kappa_i d_i(1/2,t_0''')\bigr)
\,.
\end{aligned}
\]
This last inequality shows that $\jjj_i(\theta,t)$ must eventually become negative (and remain negative afterwards) as time increases. More precisely, since according to the bounds \cref{bound_u_ut_ck} on the solution the quantity $\gggg_i(\theta,t'''_0)$ is bounded uniformly with respect to $\theta$, the same is true for the quantity $\jjj_i(\theta,t'''_0)$. As a consequence, there must exist a time $t''_0$, greater than or equal to $t'''_0$, such that, for every $i$ in $\{1,\dots,q\}$ and $\theta$ in $[0,1]$ and $t$ in $[t''_0,+\infty)$, 
\[
\jjj_i(\theta,t) \le 0 \,,
\quad\text{so that}\quad
\hhh_i(\theta,t) \le 0
\,,
\]
and in view of the definition \cref{def_hhh_i} of $\hhh_i(\theta,t)$, inequality \cref{upper_bound_gggg_i} follows. \Cref{lem:upper_bound_gggg_i} is proved. 
\end{proof}
\begin{corollary}[Upper bound on the integral of the firewall over a gap]
\label{cor:upper_bound_integral_fff_i}
For every time $t$ greater than or equal to $t_0''$ and for every $i$ in $\{1,\dots,q\}$, the following inequality holds:
\begin{equation}
\label{upper_bound_integral_fff_i}
\int_{b_{i+1,+}(t)}^{b_{i,-}(t)} \fff_i(x,t)\, dx \le \frac{16\KFOf{i}}{\kappa_i^2\nuFOf{i}}\exp\bigl(-\kappa_i \Lext(t) \bigr)
\,.
\end{equation}
\end{corollary}
\begin{proof}[Proof of \cref{cor:upper_bound_integral_fff_i}]
For every time $t$ greater than or equal to $t_0''$ and for every $i$ in $\{1,\dots,q\}$, using the notation $\LgapOf{i}(t)$ introduced in \cref{def_LgapOf_i}, 
\[
\int_{b_{i+1,+}(t)}^{b_{i,-}(t)} \fff_i(x,t)\, dx = \LgapOf{i}(t) \int_0^1 \gggg_i(\theta,t)\, d\theta
\,,
\]
so that, according to \cref{lem:upper_bound_gggg_i} and to the expression \cref{def_d_i} of $d_i(\theta,t)$, 
\[
\begin{aligned}
\int_{b_{i+1,+}(t)}^{b_{i,-}(t)} \fff_i(x,t)\, dx &\le \LgapOf{i}(t) \frac{8\KFOf{i}}{\kappa_i\nuFOf{i}}\exp\bigl(-\kappa_i \Lext(t) \bigr) \\
&\qquad\times\int_0^1 \exp\bigl(-\kappa_i \min(\theta,1-\theta)\LgapOf{i}(t) \bigr)\, d\theta \\
&\le \LgapOf{i}(t) \frac{16\KFOf{i}}{\kappa_i\nuFOf{i}}\exp\bigl(-\kappa_i \Lext(t) \bigr) \int_0^{1/2} \exp\bigl(-\kappa_i\theta\LgapOf{i}(t) \bigr)\, d\theta \\
&\le \LgapOf{i}(t) \frac{16\KFOf{i}}{\kappa_i\nuFOf{i}}\exp\bigl(-\kappa_i \Lext(t) \bigr) \frac{1}{\kappa_i \LgapOf{i}(t)}
\,,
\end{aligned}
\]
and inequality \cref{upper_bound_integral_fff_i} follows. \Cref{cor:upper_bound_integral_fff_i} is proved. 
\end{proof}
\begin{lemma}[integral of firewall dominates integral of energy]
\label{lem:integral_of_fff_i_dominates_integral_of_energy}
For every time $t$ greater than or equal to $t_0$ and for every $i$ in $\{1,\dots,q\}$, the following inequalities hold: 
\begin{equation}
\label{integral_of_fff_i_dominates_integral_of_energy}
0\le\int_{b_{i+1,+}(t)}^{b_{i,-}(t)} E^\ddag(x,t)\, dx \le \frac{2\kappa_i}{\coeffEn} \int_{b_{i+1,+}(t)}^{b_{i,-}(t)} \fff_i(x,t)\, dx 
\,.
\end{equation}
\end{lemma}
\begin{proof}[Proof of \cref{lem:integral_of_fff_i_dominates_integral_of_energy}]
The left inequality follows from the empty intersection between the interval $[b_{i+1,+}(t),b_{i,-}(t)]$ and the set $\SigmaEscOf{i}(t)$. Concerning the right inequality, for every time $t$ greater than or equal to $t_0$ and for every $i$ in $\{1,\dots,q\}$,
\begin{equation}
\label{integral_of_fff_i_dominates_integral_of_F_i_step}
\begin{aligned}
\int_{b_{i+1,+}(t)}^{b_{i,-}(t)} \fff_i(x,t)\, dx &= \int_{b_{i+1,+}(t)}^{b_{i,-}(t)} \left(\int_{\rr} \psi_i(y-x)F^\ddag_i(y,t)\, dy \right)\, dx \\
&= \int_{\rr} F^\ddag_i(y,t) \left(\int_{b_{i+1,+}(t)}^{b_{i,-}(t)} \psi_i(y-x) \, dx \right)\, dy \\
&\ge \int_{b_{i+1,+}(t)}^{b_{i,-}(t)} F^\ddag_i(y,t) \left(\int_{b_{i+1,+}(t)}^{b_{i,-}(t)} \psi_i(y-x) \, dx \right)\, dy 
\,,
\end{aligned}
\end{equation}
and, with the notation $\LgapOf{i}(t)$ introduced in \cref{def_LgapOf_i},
\[
\begin{aligned}
\int_{b_{i+1,+}(t)}^{b_{i,-}(t)} \psi_i(y-x) \, dx &\ge \int_0^{\LgapOf{i}(t)} e^{-\kappa_i \xi}\, d\xi \\
&\ge \frac{1}{\kappa_i} \Bigl(1 - \exp\bigl(-\kappa_i \LgapOf{i}(t)\bigr)\Bigr)
\,.
\end{aligned}
\]
Since the quantity $\LgapOf{i}(t)$ goes to $+\infty$ as time goes to $+\infty$, it is greater than $\log(2)/\kappa_i$ if $t$ is large enough positive, and in this case it follows from the previous inequality that
\[
\int_{b_{i+1,+}(t)}^{b_{i,-}(t)} \psi_i(y-x) \, dx \ge \frac{1}{2\kappa_i} 
\,,
\]
and in view of inequality \cref{integral_of_fff_i_dominates_integral_of_F_i_step} and since $F^\ddag_i(x,t)$ is not smaller than $\coeffEn E^\ddag(x,t)$, the right inequality of \cref{integral_of_fff_i_dominates_integral_of_energy} follows. \Cref{lem:integral_of_fff_i_dominates_integral_of_energy} is proved. 
\end{proof}
\begin{proof}[End of the proof of \cref{lem:no_energy_besides_SigmaBulk,prop:eeeAsympt_of_u_equals_eee_of_ttt}]
Since $\Lext(t)$ goes to $+\infty$ as $t$ goes to $+\infty$, it follows from inequality \cref{upper_bound_integral_fff_i} of \cref{cor:upper_bound_integral_fff_i} and from inequalities \cref{integral_of_fff_i_dominates_integral_of_energy} of \cref{lem:integral_of_fff_i_dominates_integral_of_energy} that the centre quantity in inequalities \cref{integral_of_fff_i_dominates_integral_of_energy} goes to $0$ as $t$ goes to $+\infty$, and \cref{lem:no_energy_besides_SigmaBulk} follows. 
In view of the limit \cref{cv_asympt_energy_to_get_its_value}, \cref{prop:eeeAsympt_of_u_equals_eee_of_ttt} follows. 
\end{proof}
In view of \cref{prop:eeeAsympt_of_u_equals_eee_of_ttt}, statement \cref{item:thm_main_value_asymptotic_energy} of \cref{thm:main} is proved, and the proof of \cref{thm:main} is complete. 
\section{Existence results for stationary solutions and basin of attraction of a stable homogeneous solution}
\label{sec:exist_res_bas_att}
The aim of this \namecref{sec:exist_res_bas_att} is to recover standard results concerning existence of homoclinic or heteroclinic stationary solutions and the basin of attraction of a stable homogeneous solution, as direct consequences of \cref{prop:scs_asympt_en} (upper semi-continuity of the asymptotic energy) and \cref{thm:main}. These results are stated as four independent corollaries. The proofs are given after the four statements. Elementary examples illustrating these results will be discussed in the next \namecref{sec:examples}.

As everywhere else, let us consider a function $V$ in $\ccc^2(\rr^d,\rr)$ satisfying hypothesis \cref{hyp_coerc}, and let us denote by $V_{\min}$ the global minimum value of $V$:
\[
V_{\min} = \min_{u\in\rr^d} V(u)
\,.
\]
\subsection{Existence results for stationary solutions}
The following two corollaries deal with the stationary solutions of system~\cref{init_syst}, and are variants of well-known results, usually obtained by calculus of variation techniques (minimization or  mountain-pass arguments, see references below). 
\subsubsection{Global minimum level set}
The following ``minimization'' corollary is illustrated by cases (a) and (b) of \vref{fig:shape_pot}. It is similar to (or contained in) results going back to the early nineties (see P. Rabinowitz \cite{Rabinowitz_periodicHeteroclinicOrbits_1989} and P. Sternberg \cite{Sternberg_vectorValuedLocMin_1991} and for instance N. Alikakos and G. Fusco \cite{AlikakosFusco_connectionPbSevGlobMin_2008} for recent results and additional references). It is by the way implicitly contained in Theorem~3 of Béthuel, Orlandi, Smets \cite{BethuelOrlandi_slowMotion_2011}.
\begin{corollary}[existence of a chain of heteroclinic stationary solutions]
\label{cor:nonneg_stat}
Assume that $V$ satisfies hypothesis \cref{hyp_coerc}. Assume furthermore that:
\begin{itemize}
\item hypothesis \textup{(\hyperlink{hypOnlyMin}{\hypOnlyMinRef{V_{\min}}})} holds; in other words every global minimum point of $V$ is nondegenerate;
\item there is more than one global minimum point of $V$; in other words the cardinal of $\mmm_{V_{\min}}$ is larger than $1$. 
\end{itemize}
Then, for every ordered pair $(m_-,m_+)$ in $\mmm_{V_{\min}}^2$ such that $m_-$ differs from $m_+$, there exist a positive integer $q$, and $q-1$ \emph{distinct} minimum points $m_1,\dots,m_{q-1}$ in $\mmm_{V_{\min}}$ such that, if $m_-$ is denoted by $m_0$ and $m_+$ by $m_q$, then for every integer $i$ in $\{0,\dots,q-1\}$, the set $\Phi_0(m_i,m_{i+1})$ is nonempty. In other words, there exists a ``chain'' of bistable stationary solutions connecting $m_-$ to $m_+$.
\end{corollary}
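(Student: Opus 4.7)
The plan is to construct an explicit bistable solution connecting $m_-$ to $m_+$ and to read off the chain from its spatial asymptotics using the escape-point machinery of \cref{sec:cv_terrace}. I first choose a smooth function $u_0\in X$ equal to $m_-$ on $(-\infty,-1]$ and to $m_+$ on $[1,+\infty)$. This $u_0$ trivially satisfies the hypotheses of \cref{prop:bist}, so the corresponding solution $(x,t)\mapsto u(x,t)$ is a bistable solution connecting $m_-$ to $m_+$. Because $V$ is everywhere nonnegative, the localized energy $\eee(t)$ from \cref{def_en_diss} is pointwise nonnegative, so $\eee_\infty[u_0]\in[0,+\infty)$; in particular it differs from $-\infty$, and \cref{thm:relax} applies.

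Next I iterate the escape-point procedure of \cref{sec:cv_terrace}. Since $m_-\ne m_+$, the infimum
\[
x_{\textup{\textrm{Esc}},1,-}(t)=\inf\bigl\{x\in\rr:\abs{u(x,t)-m_-}_{\ddd}=\dEsc\bigr\}
\]
is finite for every sufficiently large $t$. Along any sequence $t_p\to+\infty$, the maps $y\mapsto u(x_{\textup{\textrm{Esc}},1,-}(t_p)+y,t_p)$ are precompact by \cref{lem:compact}, and any limit is a stationary solution $u_{\infty,1}\in\statsol(m_-,m_1)$ for some $m_1\in\mmm_0$, by \cref{lem:ham_equ_hyp} and \cref{lem:app_bist} (only the existence part of the argument of \cref{lem:cv_inhomog_case} is needed here; the uniqueness part relying on \hypDiscStat is not). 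If $m_1=m_+$, the procedure stops with $q=1$; otherwise I repeat the construction with $m_-$ replaced by $m_1$, starting from the next escape abscissa out of the $\dEsc$-neighbourhood of $m_1$, and extract $u_{\infty,2}\in\statsol(m_1,m_2)$, and so on.

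This iteration terminates after finitely many steps. Indeed, the set of energies $\bigl\{\eee[u]:u\in\statsol(\mmm_0)\setminus\{\mbox{constants}\}\bigr\}$ admits a uniform positive lower bound $E_{\min}$, a consequence of \hypOnlyMin, the quadratic estimates \cref{property_r_Esc}, the identity $\abs{u'}_{\ddd}^2/2=V(u)$ valid on the zero Hamiltonian level set, and the finiteness of $\mmm_0$; and since the successive escape abscissae diverge from each other, the contributions of the extracted $u_{\infty,i}$ to the localized energy are asymptotically disjoint, so their sum cannot exceed the finite quantity $\eee_\infty[u_0]$. When the iteration stops at some $q\ge 1$, the argument of \cref{lem:cv_homog_case} applied to the region to the right of the last escape point forces $m_q=m_+$. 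Finally, to enforce distinctness of $m_1,\dots,m_{q-1}$, I prune any repetitions: whenever $m_i=m_j$ with $0\le i<j\le q$, I excise the sub-chain $u_{\infty,i+1},\dots,u_{\infty,j}$ and concatenate the remaining links, producing a strictly shorter valid chain; finitely many such steps yield a chain with pairwise distinct interior nodes. The main obstacle is the uniform lower bound $E_{\min}>0$ combined with the asymptotic additivity of the localized energy over well-separated bumps, which together ensure that the iteration cannot run forever.
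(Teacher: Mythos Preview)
Your argument is essentially correct, and it takes a more constructive route than the paper's own proof. The paper argues in three lines: since $V\ge 0$, the asymptotic energy of any bistable initial condition connecting $m_-$ to $m_+$ is finite, so \cref{thm:relax} applies; the continuous curve $x\mapsto\bigl(u(x,t),u_x(x,t)\bigr)$ runs from near $(m_-,0)$ to near $(m_+,0)$ while staying uniformly close to $I\bigl(\statsol(\mmm_0)\bigr)$; hence this set ``connects'' $(m_-,0)$ to $(m_+,0)$ in $\rr^{2n}$, which forces the existence of a chain. You instead rerun the escape-point procedure of \cref{sec:cv_terrace} along a sequence $t_p\to+\infty$ (extracting subsequences at each step), bypassing \hypDiscStat by only using the existence half of \cref{lem:cv_inhomog_case}. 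This is a legitimate alternative and arguably more explicit than the paper's connectivity sketch.

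A few places deserve tightening. First, you should make explicit that the whole iteration runs along \emph{one} nested sequence of subsequences, and that the definitions of $x_{\textup{Esc},k,+}$ and $x_{\textup{Esc},k+1,-}$ are made only along that subsequence (the paper's post-\cref{lem:cv_inhomog_case} definitions rely on convergence for all $t$, which you do not have). Second, your uniform lower bound $E_{\min}>0$ is exactly the content of \cref{lem:lag_sig_far_pos} applied on the zero Hamiltonian level set (where $L\bigl(u,u'\bigr)=\abs{u'}_\ddd^2\ge 0$), so you may simply cite that lemma. Third, the ``asymptotic additivity'' step uses $V\ge 0$ in an essential way: outside the windows around each bump the Lagrangian density is nonnegative, so the sum $\sum_i\eee[u_{\infty,i}]$ is indeed bounded by $\eee_\infty[u_0]$; say this explicitly. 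Finally, the dichotomy at each step is cleaner than you wrote: if $m_k\neq m_+$ then $x_{\textup{Esc},k+1,-}(t_p)$ is necessarily finite for all large $p$ along the current subsequence (since $u(\cdot,t_p)\to m_+$ at $+\infty$ and distinct points of $\mmm_0$ are more than $\dEsc$ apart in the $\abs{\cdot}_\ddd$-norm by strict convexity on the $\dEsc$-balls), so the iteration continues until $m_q=m_+$, and the energy bound caps $q$. With these clarifications your proof is complete.
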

\subsubsection{Local minimum level set}
The following ``mountain pass'' corollary is illustrated by cases (c), (d), and (e) of \vref{fig:shape_pot}. It is similar to (or contained in) results going back the early nineties (see A. Ambrosetti and M. L. Bertotti \cite{AmbrosettiBertotti_homoclinicsSecondOrder_1992}, Bertotti \cite{Bertotti_homoclinicsLagrangian_1992}, and Rabinowitz and K. Tanaka \cite{RabinowitzTanaka_connectingOrbits_1991}).
\begin{corollary}[existence of a homoclinic stationary solution]
\label{cor:neg_stat}
Assume that $V$ satisfies hypothesis \cref{hyp_coerc} and let $m$ be a point in $\mmm$. Let us assume that:
\begin{itemize}
\item $m$ is not a global minimum point of $V$; in other words $V_{\min}<V(m)$;
\item and there is no critical point other than $m$ in the level set $V^{-1}(\{V(m)\})$. 
\end{itemize}
Then there exists at least one nonconstant stationary solution that is homoclinic to $m$. In other words, the set $\Wu(m,0)\cap \Ws(m,0)$ is nonempty, or in other words the set $\Phi_0(m,m)$ is nonempty. 
\end{corollary}
\subsection{Basin of attraction of a stable homogeneous stationary solution}
The next two corollaries can be viewed as ``dynamical'' versions of the two previous ones. They require the following notation. 
\begin{notation}
If $m$ is point in $\mmm$, let $\basatt(m)$ denote the basin of attraction (for the semi-flow of system~\cref{init_syst}) of the homogeneous equilibrium $m$: 
\[
\basatt(m)=\bigl\{ u_0\in X: (S_t u_0)(x) \to m \text{ , uniformly with respect to } x, \text{ as } t\to+\infty \bigr\} \,,
\]
and let $\partial\basatt(m)$ denote the topological border, in $X$, of $\basatt(m)$. 
\end{notation}
\subsubsection{Global minimum point}
\cref{cor:nonneg_dyn} below applies to example (c) of \vref{fig:shape_pot}. As \cref{cor:nonneg_stat} above, it is implicitly contained in \cite[Theorem~3]{BethuelOrlandi_slowMotion_2011}.
\begin{corollary}[global stability of the unique global minimum point]
\label{cor:nonneg_dyn}
Assume that $V$ satisfies hypothesis \cref{hyp_coerc}. Assume furthermore that:
\begin{itemize}
\item the potential $V$ has a unique global minimum point, and this minimum point (denoted by $m$) is nondegenerate;
\item and there exists no nonconstant stationary solution homoclinic to $m$; in other words the set $\Phi_0(m,m)$ is reduced to the function identically equal to $m$, or in other words the set $\Wu(m,0)\cap \Ws(m,0)$ is empty.
\end{itemize}
Then every bistable solution connecting $m$ to $m$ converges to $m$, uniformly in space, as time goes to $+\infty$. In other words, 
\[
\Xbist(m,m) = \basatt(m).
\]
\end{corollary}
\subsubsection{Local minimum point}
\cref{cor:neg_dyn} below applies to cases (c), (d), and (e) of \vref{fig:shape_pot}, and is analogous in spirit to results of author's previous paper \cite{Risler_borderBasinsAtt_2000}. It is somehow related to the huge amount of existing literature about (codimension one) threshold phenomena in reaction-diffusion equations, going back (at least) to Fife's paper \cite{Fife_longTimeBistable_1979} of 1979 and the contributions of G. Flores in the late eighties \cite{Flores_stabManStandWave_1989}. Other references about this subject can be found in the recent paper \cite{MuratovZhong_thresholdSymSol_2013} of Muratov and Zhong, where various threshold results of the same kind are obtained. The arguments used by these authors are based on the energy function~\vref{form_en}, and are quite close in essence (although applied in a different setting limited to the scalar case $d$ equals $1$) to those of the present paper. 
\begin{corollary}[attractor of the border of the basin of attraction of a local minimum point]
\label{cor:neg_dyn}
Assume that $V$ satisfies hypothesis \cref{hyp_coerc}. Let $m$ be a point in $\mmm$ and let us write $\valueOfV=V(m)$. Let us assume that:
\begin{itemize}
\item $m$ is not a global minimum point of $V$; in other words $V_{\min}<\valueOfV$;
\item and hypothesis \textup{(\hyperlink{hypOnlyMin}{\hypOnlyMinRef{\valueOfV}})} holds; in other words,  every critical point in the level set $V^{-1}(\{\valueOfV\})$ is a nondegenerate minimum point. 
\end{itemize}
Then the following conclusions hold. 
\begin{enumerate}
\item There exists at least one bistable initial condition connecting $m$ to himself and belonging to the border of the basin of attraction of the spatially homogeneous equilibrium $m$; in other words, the set
\begin{equation}
\label{border_of_basin_of_attraction}
\partial\basatt(m)\cap \Xbist(m,m)
\end{equation}
is nonempty.
\label{item:first_conclusion_cor_attractor_border_basin_attraction}
\item Every solution $(x,t)\mapsto u(x,t)$ of system~\cref{init_syst} in this nonempty set \cref{border_of_basin_of_attraction} has a positive asymptotic energy, and for every such solution, the quantities
\[
\sup_{x\in\rr} \ \abs{u_t(x,t)}
\quad\text{and}\quad 
\sup_{x\in\rr} \ \dist\biggl(\Bigl(u(x,t),u_x(x,t)\Bigr) \,, \,I\bigl(\Phi_0(\valueOfV)\bigr)\biggr)
\]
go to $0$ as time goes to $+\infty$.
\label{item:second_conclusion_cor_attractor_border_basin_attraction}
\end{enumerate}
\end{corollary}
\begin{remark}
Assume that the potential $V$ has a unique global minimum point $m$, which is non degenerate (the first bullet hypothesis of \cref{cor:nonneg_dyn}). 
\begin{itemize}
\item If furthermore $d$ equals $1$ (scalar case), then the set $\PhiZero(m,m)$ is necessarily empty (indeed every solution of the Hamiltonian system~\vref{ham_ord_1} in the unstable manifold of $(m,0)$ goes to infinity as time goes to $+\infty$, since the velocity can never vanish). As a consequence, the conclusions of \cref{cor:nonneg_dyn} hold: every bistable solution connecting $m$ to $m$ converges to $m$, uniformly with respect to the space coordinate, as time goes to $+\infty$.
\item The situation is quite different in the vector case $d$ larger than $1$, where nonconstant stationary solutions homoclinic to a unique global minimum point might very well exist. Here is an example (the parameter $\varepsilon$ is a small positive quantity): 
\[
V:\rr^2\to\rr,
\quad
(u_1,u_2)\mapsto -\frac{u_1^2+u_2^2}{2} + \frac{(u_1^2+u_2^2)^2}{4} - \varepsilon u_1
\,.
\]
For additional information and comments see P. Coullet \cite{Coullet_locPattFronts_2002}.
\end{itemize}
\end{remark}
\subsection{Proof of \texorpdfstring{\cref{cor:nonneg_stat,cor:nonneg_dyn}}{Corollaries \ref{cor:nonneg_stat} and \ref{cor:nonneg_dyn}} (global minimum)}
Let us assume that $V$ satisfies hypotheses \cref{hyp_coerc} and \textup{(\hyperlink{hypOnlyMin}{\hypOnlyMinRef{V_{\min}}})}. The asymptotic energy of every bistable initial condition in $\Xbist(\mmm_{V_{\min}})$ is nonnegative (as a limit of nonnegative quantities), therefore the corresponding solution must converge towards the set $I\bigl(\PhiZero(V_{\min})\bigr)$, as stated in conclusion \cref{item:thm_main_nonnegative_asympt_energy_approach_bistable_stationary} of \cref{thm:main}. 

Let us assume that the set $\mmm_{V_{\min}}$ of (global) minimum points of $V$ is not reduced to a singleton, and let $m_-$ and $m_+$ be two distinct points in this set. According to \cref{cor:bist} the set $\Xbist(m_-,m_+)$ of bistable initial conditions connecting these two points is nonempty, and according to conclusion \cref{item:thm_main_nonnegative_asympt_energy_approach_bistable_stationary} of \cref{thm:main} applied to a solution in this set, the set $I\bigl(\PhiZero(V_{\min})\bigr)$ must connect the two points $(m_-,0)$ and $(m_+,0)$ in $\rr^{2d}$. This ensures the existence of a ``chain'' of heteroclinic stationary solutions connecting $m_-$ to $m_+$. \Cref{cor:nonneg_stat} is thus proved.

If conversely the set $\mmm_{V_{\min}}$ is reduced to a single point $m$ and if there is no nonconstant stationary solution homoclinic to $m$, then the set $I\bigl(\PhiZero(V_{\min})\bigr)$ must be reduced to the singleton $\{(m,0)\}$. In this case conclusion \cref{item:thm_main_nonnegative_asympt_energy_approach_bistable_stationary} of \cref{thm:main} shows that every bistable solution connecting $m$ to himself must converge towards $m$, uniformly in space, as time goes to $+\infty$. This proves \cref{cor:nonneg_dyn}.
\subsection{Proof of \texorpdfstring{\cref{cor:neg_stat,cor:neg_dyn}}{Corollaries~\ref{cor:neg_stat} and~\ref{cor:neg_dyn}} (local minimum)}
Let us assume that $V$ satisfies \cref{hyp_coerc}, let $m$ denote a point in $\mmm$, let $\valueOfV$ denote the quantity $V(m)$, and let us assume that $V_{\min}$ is less than $\valueOfV$. Let $m_{\min}$ denote a point of $\rr^d$ where $V$ reaches its global minimum, in other words such that $V(m_{\min})$ equals $V_{\min}$. Let $\chi:\rr\to[0,1]$ be a smooth cutoff function satisfying
\[
\chi(x)=1 \text{ for all } x \text{ in } (-\infty,0] \text{ and } \chi(x)=0 \text{ for all } x \text{ in } [1+\infty) \,.
\]
For every positive quantity $L$, let us introduce the function $u_{0,L}:\rr^d\to\rr$ defined as
\[
u_{0,L}(x)=\left\{
\begin{aligned}
m+\chi(x-L)(v-m)\quad & \text{for}\quad x\ge0 \\
u_{0,L}(-x)\quad & \text{for}\quad x\le0 
\,,
\end{aligned}
\right.
\]
see \cref{fig:graph_u1},
\begin{figure}[!htbp]
\centering
\includegraphics[width=.8\textwidth]{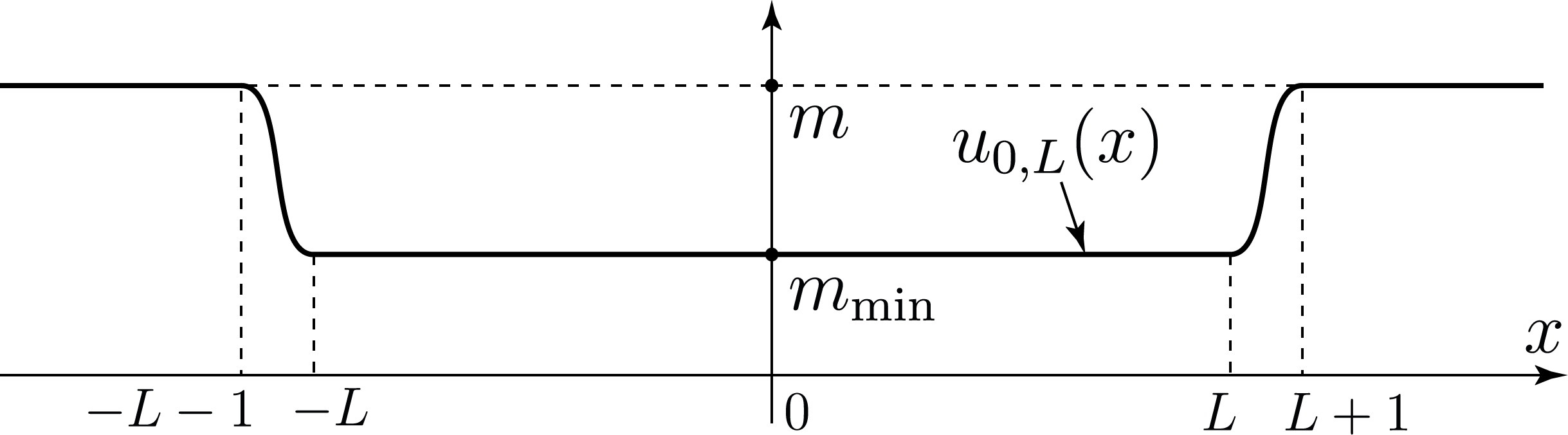}
\caption{Graph of the function $x\mapsto u_{0,L}(x)$.}
\label{fig:graph_u1}
\end{figure}
and, for every $s$ in $[0,1]$, let us introduce the function $u_{0,L,s}:\rr^d\to\rr$ defined as
\[
u_{0,L,s} = (1-s)m + s(u_{0,L}-m)
\,.
\]
The following observations can be made. 
\begin{itemize}
\item According to \vref{lem:sufficient_condition_stability_right_end_of_space}, every function $u_{0,L,s}$ (for every positive quantity $L$ and every $s$ in $[0,1]$) belongs to $\Xbist(m,m)$ (it is a bistable initial condition connecting $m$ to itself).
\item The function $u_{0,L,0}$ is identically equal to $m$, it is thus a (stable, homogeneous) stationary solution of system \cref{init_syst}, and it belongs to the basin of attraction $\basatt(m)$.
\item Since $V_{\min}<V(m)$, the quantity
\[
\int_{-\infty}^{+\infty} \Bigl(\frac{1}{2}\abs{ u_{0,L}'(x)}_{\ddd} ^2+V\bigl(u_{0,L}(x)\bigr)-V(m)\Bigr)\, dx 
\]
(the energy of the bistable initial condition $u_{0,L}$) goes to $-\infty$ as $L$ goes to $+\infty$. In view of the upper bound \cref{dt_en_loc_real} of \cref{lem:loc_en_almost_decr} on the time derivative of localized energy, this shows that, for $L$ large enough, the asymptotic energy of the solution corresponding to the initial condition $u_{0,L}$ is negative, and as a consequence $u_{0,L}$ is \emph{not} in $\basatt(m)$. 
\item The function $u_{0,L,1}$ is equal to $u_{0,L}$. 
\item The function $[0,1]\to X$, $s\mapsto u_{0,L,s}$ is continuous (for the usual $\HoneulAlone$-norm on $X$). 
\end{itemize}
Let us pick a positive quantity $L$, large enough so that $u_{0,L}$ is not in $\basatt(m)$. Then it follows from the observations above that there must exist a quantity $\paramthres$ in $(0,1]$ such that the function (bistable initial condition) $u_{0,\paramthres}$ is in the set $\partial\basatt(m)$ (the topological border of $\basatt(m)$ in $X$). In particular the set $\partial\basatt(m)\cap\Xbist(m,m)$ is nonempty, which proves conclusion \cref{item:first_conclusion_cor_attractor_border_basin_attraction} of \cref{cor:neg_dyn}. 

Now, since according to \vref{prop:scs_asympt_en} the asymptotic energy of a solution is upper semi-continuous with respect to that solution, every initial condition in $\partial\basatt(m)$ must have a nonnegative asymptotic energy, and more precisely, according to \vref{lem:lag_sig_far_pos}, a positive asymptotic energy. According to conclusion \cref{item:thm_main_nonnegative_asympt_energy_approach_bistable_stationary} of \cref{thm:main}, every solution in $\partial\basatt(m)$ must then approach the set $I\bigl(\PhiZero(\valueOfV)\bigr)$ as time goes to $+\infty$. It follows that this set is not reduced to the point $(m,0)$, or else such a solution would approach $m$ uniformly in space and would thus belong to $\basatt(m)$ and not its border, a contradiction. This proves conclusion \cref{item:second_conclusion_cor_attractor_border_basin_attraction} of \cref{cor:neg_dyn}. \Cref{cor:neg_dyn} is proved.

If moreover the set $\mmm_{V(m)}$ is reduced to the singleton $\{m\}$, then it follows that there exists at least one nonconstant stationary solution that is homoclinic to $m$, and this proves \cref{cor:neg_stat}. 
\subsection{Extensions}
As shown by \cref{cor:nonneg_stat,cor:neg_stat,cor:nonneg_dyn,cor:neg_dyn}, the properties of the semi-flow of system \cref{init_syst} provide an alternate approach to results usually obtained by calculus of variation techniques. The results stated above are nothing but elementary examples, but the same approach might be relevant to more recent results, as for instance the existence of non-minimizing connections proved in \cite{OliverBonafoux_nonMinConnOrbitsMultiWell_2022}. 
\section{Examples} 
\label{sec:examples}
This \namecref{sec:examples} is devoted to a discussion on elementary examples in the scalar case $d$ equals $1$, corresponding to the potentials illustrated on \vref{fig:shape_pot}. In all these examples the value $\valueOfV$ of the potential at the equilibria approached at both ends of $\rr$ by the bistable solutions considered is equal to $0$, and hypotheses \cref{hyp_coerc} and \textup{(\hyperlink{hypOnlyMin}{\hypOnlyMinRef{0}})} and \textup{(\hyperlink{hypDiscStat}{\hypDiscStatRef{0}})} are satisfied. 
\subsection{Allen--Cahn equation}
The equation reads (see example (a) of \cref{fig:shape_pot}): 
\[
u_t=u-u^3+u_{xx}=-V'(u)+u_{xx}
\quad\text{where}\quad
V(u)=1/4-u^2/2+u^4/4 \,.  
\]
In this example the set $\mmm_{0}$ is made of the two points $-1$ and $1$, and the set $\PhiZero(0)$ consists of:
\begin{itemize}
\item the ``kink'' solution $x\mapsto \tanh (x/\sqrt{2})$, 
\item and the ``antikink'' solution $x\mapsto -\tanh (x/\sqrt{2})$
\end{itemize}
(and their translates with respect to $x$). According to \cref{thm:main}, for every initial condition $u_0$ in $\Xbist(\pm1,\pm1)$, the solution $S_t u_0$ approaches, as time goes to $+\infty$, a standing terrace involving a finite number of alternatively kink and antikink solutions, getting slowly away from one another. 

Since the long-range interaction between two consecutive kink and antikink solutions is attractive, the following more precise result actually holds. In the sentences below, ``approaches'' means ``approaches as time goes to $+\infty$, uniformly with respect to $x$ in $\rr$''. 
\begin{itemize}
\item If $u_0$ is in $\Xbist(-1,-1)$, then $S_t u_0$ approaches $-1$.
\item If $u_0$ is in $\Xbist(+1,+1)$), then $S_t u_0$ approaches $+1$. 
\item If $u_0$ is in $\Xbist(-1,+1)$, then there exists $x_0\in\rr$ such that $S_t u_0$ approaches the single kink $x\mapsto\tanh\bigl((x-x_0)/\sqrt{2}\bigr)$.
\item If $u_0$ is in $\Xbist(+1,-1)$, then there exists $x_0\in\rr$ such that $S_t u_0$ approaches the single kink $x\mapsto\tanh\bigl((x_0-x)/\sqrt{2}\bigr)$. 
\end{itemize}
This result is implicit in many papers since this Allen--Cahn model is the simplest exhibiting this kind of long-range interaction, and consequently has been the most studied, see for instance \cite{Ei_motionPulses_2002,BethuelSmets_motionLawFrontsScalarRDEquEqualDepthMultWellPot_2017} (where other references can be found). 
\subsection{Over-damped sine--Gordon equation}
The equation reads (see example (b) of \cref{fig:shape_pot}): 
\[
u_t=-\sin u+u_{xx}=-V'(u)+u_{xx} 
\quad\text{where}\quad
V(u)=-\cos u+1 \,.  
\]
In this example the set $\mmm_0$ is $2\pi\zz$. Stationary solutions connecting  equilibria in this set are: a ``kink'' connecting $0$ to $2\pi$, an ``antikink'' connecting $2\pi$ to $0$, their translates with respect to $x$, and their $2\pi\zz$-translates with respect to $u$. 

According to the maximum principle, for every ordered pair $(q_-,q_+)$ in $\zz^2$ and every initial condition $u_0$ in $\Xbist(2\pi q_-,2\pi q_+)$, the corresponding solution is bounded, and therefore all conclusions of \cref{thm:main} hold (the potential can be changed without changing the solution in order hypothesis \cref{hyp_coerc} to be satisfied). According to these conclusions, the solution converges, as $t\to+\infty$, towards a standing terrace involving a finite number of kinks and antikinks, getting slowly away from one another. 

Again, since the long-range interaction between two consecutive kink and antikink solutions is attractive, this standing terrace actually involves either $q_+-q_-$ kinks (if $q_+$ is larger than $q_-$), or $q_--q_+$ antikinks (if $q_-$ is larger than $q_+$), or is reduced to the homogeneous equilibrium $q_+$ if $q_+$ and $p_-$ are equal \cite{BethuelSmets_motionLawFrontsScalarRDEquEqualDepthMultWellPot_2017}. 
\subsection{Nagumo equation}
The equation reads (see example (c) of~\cref{fig:shape_pot}): 
\[
u_t=-u(u-a)(u-1)+u_{xx}=-V'(u)+u_{xx}
\]
where 
\[
V(u)=a\frac{u^2}{2} - (a+1)\frac{u^3}{3} + \frac{u^4}{4}
\quad\text{and}\quad
0<a<1/2
\,.
\]
In this case the set $\mmm_0$ is reduced to the minimum point $0$, the bistable potential $V$ reaches its global minimum at $1$ (thus $V(1)$ is negative), 
and the set $\PhiZero(0)$ is reduced to a single stationary solution (``ground state'') $x\mapsto\phiGround(x)$ homoclinic to $0$ (and its translates with respect to $x$). A Sturm--Liouville argument shows that this solution has one dimension of instability. 

According to \vref{cor:neg_dyn}, the set $\partial\basatt(0)\cap \Xbist(0,0)$ is nonempty, and, for every initial condition $u_0$ in this set, the asymptotic energy $\eeeAsympt[u_0]$ is positive. Thus, all conclusions of \cref{thm:main} hold for this initial condition: the corresponding solution approaches a standing terrace involving a finite (nonzero) number of translates of $\phiGround$, getting slowly away from one another, as time goes to $+\infty$. 

Once again, the long-range interaction between two consecutive translates of $\phiGround$ is attractive (\cite{Ei_motionPulses_2002,MielkeZelik_multiPulseEvolutionAndSpaceTimeChaosDissipativeSystems_2009,BethuelSmets_motionLawFrontsScalarRDEquEqualDepthMultWellPot_2017}), therefore there should actually be only one translate of $\phiGround$ in the standing terrace. Thus, there should exist $x_0$ in $\rr$ such that this solution approaches the translate $x\mapsto \phiGround(x-x_0)$ of $\phiGround$, uniformly with respect to $x$, as time goes to $+\infty$. This conclusion should follow from a combination of the arguments of \cite{MielkeZelik_multiPulseEvolutionAndSpaceTimeChaosDissipativeSystems_2009,BethuelSmets_motionLawFrontsScalarRDEquEqualDepthMultWellPot_2017}, but to the knowledge of the author a detailed proof along such lines is still missing. However, this conclusion has actually been recently proved by Matano and Poláčik (\cite[Theorem~2.5]{MatanoPolacik_dynNonnegSolOneDimRDI_2016} and \cite[Theorem~2.5]{MatanoPolacik_dynNonnegSolOneDimRDII_2020}) by a completely different approach based on the zero number of the solution. Note that in this example the stable manifold of the stationary solution $\phiGround$ is the border of the basin of attraction of the ``metastable'' homogeneous equilibrium $0$ (this has been stated by many authors for a long time, see for instance \cite{Flores_stabManStandWave_1989,Risler_borderBasinsAtt_2000}). 

Similar conclusions can be drawn about the \emph{over-damped sine--Gordon equation with constant forcing} (see example (d) of \cref{fig:shape_pot}): 
\[
u_t=-\sin u+\Omega+u_{xx} 
\quad\text{with}\quad
0<\Omega<1 \,. 
\]
\subsection{``Subcritical'' Allen--Cahn equation}
The equation reads (see example (e) of \cref{fig:shape_pot}): 
\[
u_t=-u+u^3-\epsilon u^5 +u_{xx}=-V'(u)+u_{xx} 
\quad\text{where}\quad
V(u) = \frac{u^2}{2} - \frac{u^4}{4} + \varepsilon \frac{u^6}{6} 
\,,
\]
and where $\varepsilon$ is a small positive quantity, the last term of the potential being there just to ensure coercivity. In this example the set $\mmm_0$ is reduced to the minimum point $\{0\}$, and the set $\PhiZero(0)$ is made of two stationary solutions homoclinic to $0$, say $h_+$ (taking positive valuers) and $h_-$ (taking negative values), and their translates with respect to $x$. 

For every initial condition $u_0$ in $\partial\basatt(0)\cap \Xbist(0,0)$ such that the corresponding solution is bounded (uniformly in $x$ and $t$), the asymptotic energy $\eeeAsympt[u_0]$ is positive and all conclusions of \cref{thm:main} hold, that is the solution converges towards a standing terrace involving a finite (nonzero) number of translates of $h_+$ and $h_-$, getting slowly away from one another. 

Once again, the long-range interaction between two consecutive translates of $h_+$ or two consecutive translates of $h_-$ is attractive \cite{Ei_motionPulses_2002,MielkeZelik_multiPulseEvolutionAndSpaceTimeChaosDissipativeSystems_2009,BethuelSmets_motionLawFrontsScalarRDEquEqualDepthMultWellPot_2017}, and therefore such two consecutive translates of the same stationary solution should not take place in the asymptotic terrace. Again in this case, this conclusion should follow from a combination of the arguments of \cite{MielkeZelik_multiPulseEvolutionAndSpaceTimeChaosDissipativeSystems_2009,BethuelSmets_motionLawFrontsScalarRDEquEqualDepthMultWellPot_2017} or from the zero number argument of \cite{MatanoPolacik_dynNonnegSolOneDimRDI_2016,MatanoPolacik_dynNonnegSolOneDimRDII_2020} (see the proof of Theorem~2.5 in each of these two references); to the best knowledge of the author however, a detailed rigorous proof of this conclusion is still missing. 
\section{Attracting ball for the semi-flow}
\label{sec:att_ball}
This \namecref{sec:att_ball} presents strong similarities with \cite[Appendix~A.1]{Risler_globCVTravFronts_2008} and \cite[Section~2]{GallayJoly_globStabDampedWaveBistable_2009}, although the hypotheses and presentation are slightly different. Note also that if the diffusion matrix $\ddd$ equals identity then the square norm of a solution obeys a maximum principle, leading to a simpler proof for global existence of solutions and existence of an attracting ball for the $\Linfty$-norm, see \cite[\InvasionRelaxationPropAttBall]{Risler_noInvasionCaseHigherSpace_2020} (however this argument does not seem to work if $\ddd$ is not the identity matrix). 
\subsection{Attracting ball in \texorpdfstring{$X$}{X}}
Recall that $X$ denotes the space $\Honeul$ (see \vref{subsec:funct_fram}).
\subsubsection{Statement}
\begin{proposition}[global existence of solutions and attracting ball in $X$]
\label{prop:glob_exist_sol_att_ball_X} 
Assume that hypothesis \cref{hyp_coerc} holds for the potential $V$. Then, for every function $u_0$ in $X$, the solution $t\mapsto S_t u_0$ of system~\cref{init_syst} with initial condition $u_0$ is defined up to $+\infty$ in time. In addition, there exist
\begin{itemize}
\item a positive quantity $\RattX$ (``radius of attracting ball for the $X$-norm''), 
\item and a positive quantity $\RmaxX[u_0]$ (``radius of maximal excursion for the $\HoneulAlone$-norm''),
\item and a positive quantity $\Tatt[u_0]$ (``delay to enter attracting ball''),
\end{itemize}
such that
\[
\begin{aligned}
\sup_{t\ge0} \norm{x\mapsto(S_t u_0)(x)}_X &\le \RmaxX[u_0] \\
\text{and}\qquad
\sup_{t\ge \Tatt[u_0]}\norm{x\mapsto(S_t u_0)(x)}_X &\le \RattX
\,.
\end{aligned}
\]
The quantity $\RattX$ depends only on $V$ and $\ddd$, whereas $\RmaxX[u_0]$ and $\Tatt[u_0]$ depend also on $\norm{u_0}_X$.
\end{proposition}
\subsubsection{Assumptions and notation for the coercivity at infinity}
According to hypothesis \cref{hyp_coerc}, there exist positive quantities $\epsCoerc$ and $\Kcoerc$ such that, for all $u$ in $\rr^d$, 
\begin{equation}
\label{coercivity_at_infty}
u\cdot \nabla V(u)\ge \epsCoerc u^2-\Kcoerc
\,.
\end{equation}
\subsubsection{Attracting ball in \texorpdfstring{$\LTwoul$}{L2ul(R,Rd)}}
First let us make an observation, besides of the proof: with the notation of \cref{subsec:1rst_ord}, expression~\vref{ddt_loc_L2} (time derivative of a localized $L^2$ function) yields, for every (nonnegative) weight function $x\mapsto\psi(x)$ in $W^{2,1}\bigl(\rr,[0,+\infty)\bigr)$, 
\begin{equation}
\label{dt_l2_att_ball}
\frac{d}{dt}\int_{\rr} \frac{\psi}{2}u^2 \, dx \le \int_{\rr} \Bigl[ \psi  \bigl( -\epsCoerc u^2 + \Kcoerc \bigr) +  \frac{\psi''}{2}\abs{u}_{\ddd}^2 \Bigr] \, dx
\,.
\end{equation}
Thus, if the weight function $\psi$ is such that $\eigDmax\psi''$ is less than or equal to $\epsCoerc\psi$, for instance:
\[
\psi(x) = \exp\Bigl(-\sqrt{\frac{\epsCoerc}{\eigDmax}}\abs{x-x_0}\Bigl)
\,,
\]
then inequality~\cref{dt_l2_att_ball} abode yields
\[
\frac{d}{dt} \int_{\rr} \frac{\psi}{2}u^2 \, dx \le -\frac{\epsCoerc}{2} \int_{\rr} \psi \, u^2 \, dx + \Kcoerc \int_{\rr} \psi  \, dx 
\,.
\]
Provided that the semi-flow is global, this inequality ensures the existence of an attracting ball in in the uniformly local Sobolev space $\LTwoul$. The proof that the semi-flow is indeed global and that there is an attracting ball in $X$ will by contrast require a combination of both localized energy and $L^2$-norm. 
\subsubsection{Proof}
\paragraph{Set-up.}
Hypothesis \cref{hyp_coerc} guarantees that $V$ is bounded from below on $\rr^d$; let us write, for all $u$ in $\rr^d$, 
\[
V_0(u)=V(u)-\min_{v\in\rr^d}V(v)
\,;
\quad\text{thus,}\quad
\min_{u\in\rr^d}V_0(u)=0
\,.
\]
Take $u_0$ in $X$ and let 
\[
u:\rr^d\times[0,T_{\max}),\quad (x,t)\mapsto u(x,t)=(S_t u_0)(x)
\]
denote the (maximal) solution of system~\cref{init_syst} with initial condition $u_0$, where $T_{\max}$ in $(0,+\infty]$ denotes the upper bound of the (maximal) time interval where this solution is defined.
\paragraph{Functionals.}
The quantity $\kappa_0$ and functions $\psi_0$ and $\fff_0$ defined below will play similar roles as the quantity $\kappa$ and the functions $\psi$ and $\fff$ that were defined in \cref{subsec:firewall}. Since the definitions below slightly differ from those of \cref{subsec:firewall}, the subscript ``$0$'' is added to avoid confusion and to recall that these new objects are related to the ``normalized'' potential $V_0$. Let $\kappa_0$ be a positive quantity, small enough so that 
\begin{equation}
\label{conditions_on_kappaZero}
\kappa_0^2 \, \frac{\eigDmax}{2}\le\frac{\epsCoerc}{2}
\quad\text{and}\quad
\kappa_0^2 \, \eigDmax\le 2
\end{equation}
(those are the conditions that yield inequality~\cref{dt_fire_att_b} below); it may, for instance, be chosen as
\[
\kappa_0 = \sqrt{\frac{\min(2,\epsCoerc)}{\eigDmax}}
\,.
\]
Let us introduce the  weight function $\psi_0$ defined as
\[
\psi_0(x) = \exp(-\kappa_0\abs{x})
\,,
\]
and, for all $t$ in $[0,T_{\max})$ and $\widebar{x}$ in $\rr$, let 
\[
\begin{aligned}
\fff_0(\widebar{x},t) &= \int_{\rr} T_{\widebar{x}}\psi_0(x)\Bigl( \frac{1}{2}\abs{u_x(x,t)}_{\ddd} ^2 + V_0\bigl(u(x,t)\bigr) + \frac{1}{2}u(x,t)^2 \Bigr) \, dx \,, \\
\mathcal{Q}(\widebar{x},t) &= \int_{\rr} T_{\widebar{x}}\psi_0(x)\Bigl( \frac{1}{2}\abs{u_x(x,t)}_{\ddd} ^2 + \frac{1}{2}u(x,t)^2 \Bigr) \, dx \,,
\end{aligned}
\]
where $T_{\widebar{x}}\psi_0(x)$ is defined as in \cref{subsec:firewall}.
The definition of $V_0$ ensures that
\[
\mathcal{Q}(\widebar{x},t) \le \fff_0(\widebar{x},t)
\,.
\]
\paragraph{Decrease of \texorpdfstring{$\fff_0$}{F0} where \texorpdfstring{$\qqq$}{Q} is large.}
According to the generic expressions~\cref{ddt_loc_en} and~\cref{ddt_loc_L2} of \cref{subsec:1rst_ord}, the function $\fff_0$ is expected to decrease with time, at least --- because of the coercivity hypothesis \cref{hyp_coerc} --- where $u(x,t)$ is large (this decrease will be used to control the function $\mathcal{Q}$). This is formalized by the next lemma. 
\begin{lemma}[$\fff_0$ decreases where $\qqq$ is large]
\label{lem:decrease_of_fff_zero_where_Q_is_large}
There exists a (positive) quantity $\Qfiredecr$, depending only on $V$ and $\ddd$, such that, for all $t$ in $[0,T_{\max})$ and $\widebar{x}$ in $\rr$, 
\begin{equation}
\label{equ:decrease_of_fff_zero_where_Q_is_large}
\Qfiredecr \le \mathcal{Q}(\widebar{x},t)   \implies \partial_t \fff_0(\widebar{x},t) \le -1
\,.
\end{equation}
\end{lemma}
\begin{proof}
According to expressions\vref{ddt_loc_en,ddt_loc_L2} (time derivatives of localized energy and $L^2$ functionals), for all $t$ in $[0,T_{\max})$ and $\widebar{x}$ in $\rr$, 
\[
\begin{aligned}
\partial_t \fff_0(\widebar{x},t) \le {} & \int_{\rr} T_{\widebar{x}}\psi_0(x) \Bigl( -u_t^2 + \kappa_0\abs{\ddd u_x \cdot u_t} - \epsCoerc u^2 + \Kcoerc - \abs{u_x}_{\ddd}^2 + \frac{\kappa_0^2}{2} \abs{u}_{\ddd}^2 \Bigr) \, dx \\
\le {} & \Kcoerc \int_{\rr} \psi_0 (x)\, dx  + \int_{\rr} T_{\widebar{x}}\psi_0(x) \Bigl(-\frac{1}{2}\epsCoerc u^2 - \frac{1}{2}\abs{u_x}_{\ddd}^2\Bigr) \, dx \\
& + \int_{\rr} T_{\widebar{x}}\psi_0(x) \Bigl( -u_t^2 + \kappa_0\abs{\ddd u_x \cdot u_t} - \frac{1}{2}\abs{u_x}_{\ddd}^2 \Bigr) \, dx \\
& + \int_{\rr} T_{\widebar{x}}\psi_0(x) \Bigl(- \frac{\epsCoerc}{2}u^2 + \frac{\kappa_0^2}{2} \abs{u}_{\ddd}^2 \Bigr) \, dx \,. \\
\end{aligned}
\]
According to the conditions \cref{conditions_on_kappaZero} on $\kappa_0$, the two last integrals are negative, thus
\begin{equation}
\label{dt_fire_att_b}
\partial_t \fff_0(\widebar{x},t) \le -\min(\epsCoerc,1)\ \mathcal{Q}(\widebar{x},t)  + \frac{2\Kcoerc}{\kappa_0}
\,,
\end{equation}
and introducing the positive quantity
\[
\Qfiredecr = \frac{1}{\min(\epsCoerc,1)}\Bigl(1+ \frac{2\Kcoerc}{\kappa_0}\Bigr)
\,,
\]
inequality \cref{equ:decrease_of_fff_zero_where_Q_is_large} follows from \cref{dt_fire_att_b}. \Cref{lem:decrease_of_fff_zero_where_Q_is_large} is proved. 
\end{proof}
\paragraph{If \texorpdfstring{$\fff_0$}{F0} is large somewhere its supremum over space decreases.}
There is still a difficulty to overcome, since the functional on the left-hand side of this implication is $\mathcal{Q}(\widebar{x},t)$ --- it would be even better if it was $\fff_0(\widebar{x},t)$. And unfortunately, the fact that the quantity $\fff_0(\widebar{x},t)$ is large does not automatically ensure that $\mathcal{Q}(\widebar{x},t)$ itself is large; indeed the reason why $\fff_0(\widebar{x},t)$ is large could be that the term $V\bigl(u(x,t)\bigr)$ takes very large values (much more than $\abs{u(x,t)}^2$) far away in space from $\widebar{x}$, thus far from the bulk of the weight function $T_{\widebar{x}}\psi_0$ (see \cref{fig:pf_att_ball}). 
\begin{figure}[!htbp]
\centering
\includegraphics[width=\textwidth]{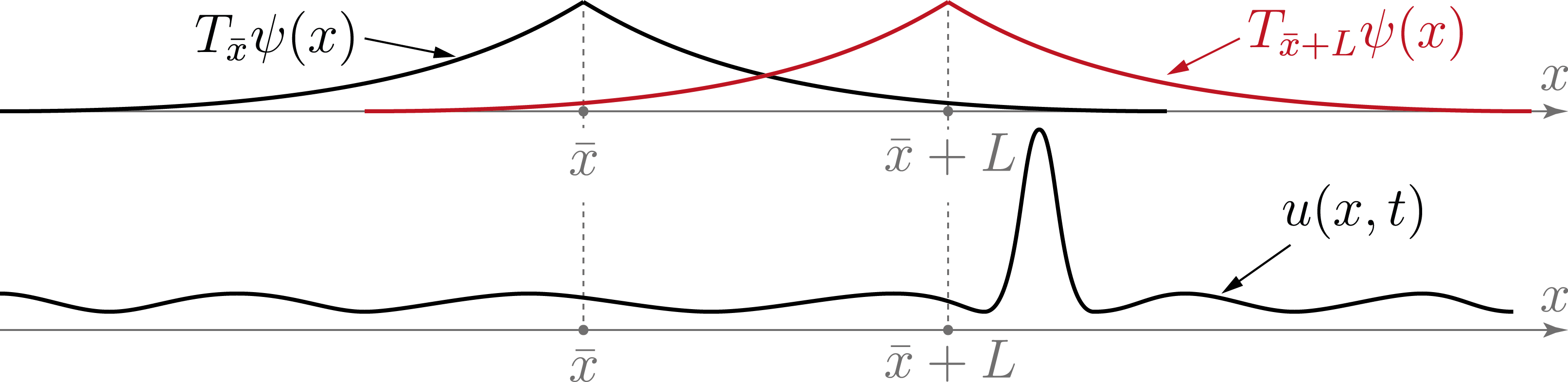}
\caption{Illustration of \cref{lem:sup_fire_higher}. If the quantity $\fff_0(\bar{x},t)$ is very large whereas the quantity $\mathcal{Q}(\bar{x},t)$ is not, this means there must be a high contribution of the potential term due to a large excursion of $u(x,t)$ far from $\bar{x}$ (to the right of $\bar{x}$ on the figure), and as a consequence $\fff_0(\cdot,t)$ reaches a higher value at $\bar{x}+L$ than at $\bar{x}$.}
\label{fig:pf_att_ball}
\end{figure}
In this case, the term $\abs{u(x,t)}^2$ in $\mathcal{Q}(\widebar{x},t)$ could count for nothing if it takes large values only far away from $\widebar{x}$. 

Hopefully, if $\fff_0(\widebar{x},t)$ is very large while $\mathcal{Q}(\widebar{x},t)$ remains below the quantity $\Qfiredecr$, this probably means that $\fff_0(\widebar{x},t)$ is (much) smaller than its supremum over all possible values of $\widebar{x}$. As a consequence, if $\fff_0(\widebar{x},t)$ is large \emph{and} close to its supremum, then the inconvenience above should not occur and $\mathcal{Q}(\widebar{x},t)$ should be large, and thus $\partial_t \fff_0(\widebar{x},t)$ should be negative. These considerations are formalized by the next lemma. 

For $t$ in $[0,T_{\max})$ let
\[
\fffZeroSup(t) = \sup_{\widebar{x}\in\rr} \fff_0(\widebar{x},t)
\]
(since the function $x\mapsto u(x,t)$ is in $X$, this quantity is finite). 
\begin{lemma}[$\qqq$ small and $\fff_0$ large means supremum of $\fff_0$ attained elsewhere]
\label{lem:sup_fire_higher}
There exists a positive quantity $\Fsuphigher$, depending (only) on $V$ and $\ddd$, such that, for all $\widebar{x}$ in $\rr$ and $t$ in $[0,T_{\max})$, 
\[
\Bigl(\mathcal{Q}(\widebar{x},t)\le \Qfiredecr \quad\text{and}\quad \fff_0(\widebar{x},t)\ge \Fsuphigher \Bigr) \implies \fffZeroSup(t) \ge \fff_0(\widebar{x},t) +1
\,.
\]
\end{lemma}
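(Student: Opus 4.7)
The plan is to formalise the intuition illustrated by the figure: if $\mathcal Q(\xi,t)$ is small but $\fff_0(\xi,t)$ is large, then the excess has to sit in the potential contribution
$\mathcal V(\xi,t) := \fff_0(\xi,t) - \mathcal Q(\xi,t) = \int_\rr T_\xi\psi_0(x)\,V_0(u(x,t))\,dx$,
and it can only be produced by large values of $V_0(u)$ at points $x$ that are \emph{far} from $\xi$; translating the weight towards such a far-away bump then multiplies the functional by a factor close to $e^{\kappa_0\abs{x-\xi}}$, easily yielding the required gain of $1$.

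First I would control $V_0(u)$ on a window $[\xi-L,\xi+L]$ of fixed size (with $L>0$ to be chosen). Since $T_\xi\psi_0(x) \ge e^{-\kappa_0 L}$ on this window, the bound $\mathcal Q(\xi,t) \le \Qfiredecr$ yields $\int_{\xi-L}^{\xi+L}\bigl(u_x(x,t)^2 + u(x,t)^2\bigr)\,dx \le C_1(L,\Qfiredecr)$. The one-dimensional Sobolev embedding $H^1([\xi-L,\xi+L])\hookrightarrow L^\infty$ then gives $\abs{u(x,t)} \le C_2(L,\Qfiredecr)$ for all $x\in[\xi-L,\xi+L]$, and since $V_0$ is continuous this produces a finite quantity $M_0(L)$ with $V_0(u(x,t)) \le M_0(L)$ on the window. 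Combined with $\int T_\xi\psi_0\,dx = 2/\kappa_0$ this gives
\[
\int_{\abs{x-\xi}\le L} T_\xi\psi_0(x)\,V_0(u(x,t))\,dx \le \frac{2 M_0(L)}{\kappa_0},
\]
so that, setting $h(x) := V_0(u(x,t))\,\mathbf{1}_{\{\abs{x-\xi}>L\}}$, the ``far'' piece satisfies $(\psi_0 * h)(\xi) \ge \fff_0(\xi,t) - \Qfiredecr - 2 M_0(L)/\kappa_0$.

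The crux is the following \emph{amplification lemma}: if $h\ge 0$ is integrable and vanishes on $(\xi-L,\xi+L)$, then
\[
\max\bigl((\psi_0*h)(\xi+L),\,(\psi_0*h)(\xi-L)\bigr) \ge \cosh(\kappa_0 L)\,(\psi_0*h)(\xi).
\]
This is elementary: $\psi_0$ is (up to the factor $2\kappa_0$) the Green function of $-\partial^2 + \kappa_0^2$, so $w := \psi_0 * h$ solves $w'' = \kappa_0^2 w$ on the interval where $h$ vanishes; writing $w(\eta) = P\,e^{\kappa_0(\eta-\xi)} + Q\,e^{-\kappa_0(\eta-\xi)}$ for $\eta\in[\xi-L,\xi+L]$, the coefficients $P = \int_{x>\xi+L}e^{-\kappa_0(x-\xi)}h(x)\,dx$ and $Q = \int_{x<\xi-L}e^{\kappa_0(x-\xi)}h(x)\,dx$ are nonnegative, and one computes $w(\xi+L)+w(\xi-L) = (P+Q)(e^{\kappa_0 L}+e^{-\kappa_0 L}) = 2 w(\xi)\cosh(\kappa_0 L)$, so the claim follows from max~$\ge$~average. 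Applied to the $h$ above and combined with $\fff_0\ge\mathcal V\ge\psi_0*h$ pointwise, this yields
\[
\overline{\fff}_0(t) \ge \cosh(\kappa_0 L)\,\Bigl(\fff_0(\xi,t) - \Qfiredecr - \frac{2M_0(L)}{\kappa_0}\Bigr).
\]

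To close, I would choose $L$ large enough that $\cosh(\kappa_0 L)\ge 2$ (for instance $L=\cosh^{-1}(2)/\kappa_0$) and set $\Fsuphigher := 1 + 2\Qfiredecr + 4 M_0(L)/\kappa_0$; a direct rearrangement of the displayed inequality then gives $\overline{\fff}_0(t) \ge \fff_0(\xi,t) + 1$ whenever $\fff_0(\xi,t) \ge \Fsuphigher$. The main obstacle is the amplification lemma above; the rest is routine bookkeeping, combining a standard Sobolev estimate on $u$ near $\xi$ (made available by the hypothesis on $\mathcal Q$) with the explicit exponential structure of the convolution $\psi_0 * h$ off the support of $h$.
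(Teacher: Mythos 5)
Your proof is correct and follows essentially the same strategy as the paper's: split the firewall integral into a middle window $[\xi-L,\xi+L]$ (controlled by the hypothesis on $\mathcal{Q}$ via a Sobolev bound) and a tail piece, and observe that translating the weight by $L$ amplifies the tail contribution enough to yield the desired gain. Your ``amplification lemma'' packaging the symmetric $\cosh(\kappa_0 L)$ gain (exploiting that $\psi_0*h$ solves $w''=\kappa_0^2 w$ off the support of $h$) is a somewhat more elegant formulation than the paper's one-sided estimate $T_{\xi\mp L}\psi_0 \ge e^{\kappa_0 L}\,T_\xi\psi_0$ applied to whichever tail carries at least half the excess, but the underlying idea is the same.
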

This lemma is illustrated by \cref{fig:pf_att_ball}.
\begin{proof}[Proof of \cref{lem:sup_fire_higher}]
Let $L$ be a positive quantity, large enough so that 
\[
\exp(-\kappa_0 L)\le \frac{1}{3}
\,, \quad \text{namely} \quad L = \frac{\log(3)}{\kappa_0}
\,.
\]
There exists a quantity $\Fboundloc$, depending (only) on $V$ and $\ddd$, such that, for all $\widebar{x}$ in $\rr$ and $t$ in $[0,T_{\max})$,
\[
\mathcal{Q}(\widebar{x},t) \le \Qfiredecr
\implies \int_{\widebar{x}-L}^{\widebar{x}+L}T_{\widebar{x}}\psi_0(x)\Bigl( \frac{1}{2}\abs{u_x(x,t)}_{\ddd} ^2 + V_0\bigl(u(x,t)\bigr) + \frac{1}{2}u(x,t)^2 \Bigr) \, dx \le \Fboundloc
\,.
\]
Thus, if $\mathcal{Q}(\widebar{x},t) \le \Qfiredecr$, then according to the definition of $\fff_0$ at least one of the following inequalities holds:
\begin{equation}
\label{ineq_fire_le_ri}
\begin{aligned}
\text{either} \quad & \int_{-\infty}^{\widebar{x}-L}T_{\widebar{x}}\psi_0(x)\Bigl( \frac{1}{2}\abs{u_x(x,t)}_{\ddd} ^2 + V_0\bigl(u(x,t)\bigr) + \frac{1}{2}u(x,t)^2 \Bigr) \, dx \ge \frac{1}{2}\bigl(\fff_0(\widebar{x},t)-\Fboundloc\bigr) \,, \\
\text{or} \quad &  \int_{\widebar{x}+L}^{+\infty}T_{\widebar{x}}\psi_0(x)\Bigl( \frac{1}{2}\abs{u_x(x,t)}_{\ddd} ^2 + V_0\bigl(u(x,t)\bigr) + \frac{1}{2}u(x,t)^2 \Bigr) \, dx \ge \frac{1}{2}\bigl(\fff_0(\widebar{x},t)-\Fboundloc\bigr)
\,.
\end{aligned}
\end{equation}
Take and fix $\widebar{x}$ in $\rr$ and $t$ in $[0,T_{\max})$ such that $\mathcal{Q}(\widebar{x},t) \le \Qfiredecr$, and assume for instance that the first of the two inequalities~\cref{ineq_fire_le_ri} above holds. 
Observe moreover that, according to the choice of $L$, for all $x$ in $(-\infty,\widebar{x}-L]$, 
\[
T_{\widebar{x}-L}\psi_0(x)=\exp(\kappa_0 L)T_{\widebar{x}}\psi_0(x)\ge 3\, T_{\widebar{x}}\psi_0(x)
\,,
\]
thus, since the integrand in $\fff_0(\cdot,\cdot)$ is nonnegative, the first of the two inequalities~\cref{ineq_fire_le_ri} above yields
\[
\fff_0(\widebar{x}-L,t) \ge \frac{3}{2}\bigl(\fff_0(\widebar{x},t)-\Fboundloc\bigr)
\,,
\]
or equivalently
\[
\fff_0(\widebar{x}-L,t) \ge \fff_0(\widebar{x},t) + \frac{1}{2}\bigl(\fff_0(\widebar{x},t)-3\Fboundloc \bigr)
\,,
\]
and this shows that the lemma holds for the following choice of $\Fsuphigher$:
\[
\Fsuphigher=3\Fboundloc+2
\,.
\]
\end{proof}
\paragraph{End of the proof.}
\begin{proof}[Proof of \cref{prop:glob_exist_sol_att_ball_X}]
It follows from \cref{lem:decrease_of_fff_zero_where_Q_is_large,lem:sup_fire_higher} that, for all $t$ in $[0,T_{\max})$,
\[
\fffZeroSup(t)\le \max\bigl(\Fsuphigher,\fffZeroSup(0)-t\bigr)
\,,
\]
thus
\[
\sup_{\widebar{x}\in\rr}\mathcal{Q}(\widebar{x},t)\le \max\bigl(\Fsuphigher,\fffZeroSup(0)-t\bigr)
\,,
\]
and these estimates hold whatever the initial condition $u_0$ in $X$. On the other hand, it follows from the definition of $\mathcal{Q}$ that, for every $\widebar{x}$ in $\rr$, 
\[
\begin{aligned}
\int_{\widebar{x}}^{\widebar{x}+1}\bigl(u(x,t)^2+u_x(x,t)^2\bigr)\, dx &\le \int_{\widebar{x}}^{\widebar{x}+1}\bigl(u(x,t)^2+\frac{1}{\eigDmin}\abs{u_x(x,t)}_\ddd^2\bigr)\, dx \\
&\le \frac{1}{\min(1,\eigDmin)} \int_{\widebar{x}}^{\widebar{x}+1}\bigl(u(x,t)^2 + \abs{u_x(x,t)}_\ddd^2 \bigr)\, dx \\
&\le \frac{2e^{\kappa_0}}{\min(1,\eigDmin)} \qqq(\widebar{x},t)
\,,
\end{aligned}
\]
thus
\[
\begin{aligned}
\norm{x\mapsto u(x,t)}_X^2 &\le \frac{2e^{\kappa_0}}{\min(1,\eigDmin)} \sup_{\widebar{x}\in\rr}\qqq(\widebar{x},t) \\
&\le \frac{2e^{\kappa_0}}{\min(1,\eigDmin)}\max\bigl(\Fsuphigher,\fffZeroSup(0)-t\bigr)
\,,
\end{aligned}
\]
and this last inequality provides the desired outcome: the semi-flow is globally defined and admits an attracting ball in $X$. \Cref{prop:glob_exist_sol_att_ball_X} is proved.
\end{proof}
\subsection{Attracting ball in \texorpdfstring{$\Linfty$}{Linfty(R,Rd)}}
\begin{lemma}[embedding of $\HoneLoc$ into $\Linfty$]
\label{lem:embedding_HoneLoc_into_Linfty}
For every function $u:x\mapsto u(x)$ in $\HoneLoc$, 
\begin{equation}
\label{embedding_HoneLoc_into_Linfty}
\abs{u(0)}\le\sqrt{2 \int_0^1 \bigl(u(x)^2 + u'(x)^2\bigr)\, dx}
\,.
\end{equation}
\end{lemma}
\begin{proof}
For every function $u:x\mapsto u(x)$ in $\HoneLoc$, 
\[
u(0) 
= -\int_0^1 \frac{d}{dx}\bigl((1-x)u(x)\bigr)\, dx 
= \int_0^1 \bigl(u(x)+(x-1)u'(x)\bigr)\, dx
\,,
\]
thus
\[
\begin{aligned}
\abs{u(0)} &\le \int_0^1 \bigl(\abs{u(x)}+\abs{u'(x)}\bigr)\, dx 
\le \sqrt{\int_0^1 \bigl(\abs{u(x)}+\abs{u'(x)}\bigr)^2\, dx} \\
&\le \sqrt{2\int_0^1 \bigl(u(x)^2 + u'(x)^2\bigr)^2\, dx} 
\,.
\end{aligned}
\]
\end{proof}
The following corollary follows from the previous lemma and from the definition \vref{def_Honeul_norm} of the $\Honeul$-norm. 
\begin{corollary}[embedding of $X$ into $\Linfty$]
\label{cor:Linfty_norm_bounded_from_above_by_Honeul}
For every function $u:x\mapsto u(x)$ in $X$, 
\begin{equation}
\label{Linfty_norm_bounded_from_above_by_Honeul}
\norm{u}_{\Linfty} \le \sqrt{2} \norm{u}_{X}
\,.
\end{equation}
\end{corollary}
\Vref{prop:glob_exist_sol_att_ball} (global existence of solutions and attracting ball for the $\Linfty$-norm) follows from \cref{prop:glob_exist_sol_att_ball_X,cor:Linfty_norm_bounded_from_above_by_Honeul}.
\section{Some properties of the profiles of stationary solutions}
\label{sec:prop_ham_app}
This \namecref{sec:prop_ham_app} is devoted to some properties of solutions of the Hamiltonian system \vref{ham_ord_2} governing stationary solutions of system~\cref{init_syst}: 
\begin{equation}
\label{ham_ord_2_bis}
\ddd u'' = \nabla V(u)
\,.
\end{equation}
As everywhere else, let us consider a function $V$ in $\ccc^2(\rr^d,\rr)$ satisfying the coercivity hypothesis \cref{hyp_coerc}. 
\subsection{Asymptotic behaviour in the neighbourhood of a minimum point}
\label{subsec:ham_equ_hyp}
\begin{lemma}[asymptotics of stationary solutions in the neighbourhood of a minimum point]
\label{lem:ham_equ_hyp}
Let $m$ be a point of $\mmm$, and let $\xi\mapsto\phi(\xi)$ be a global solution of the differential system \cref{ham_ord_2_bis} satisfying
\begin{equation}
\label{hyp_lem_ham_equ_hyp}
\abs{\phi(\xi)-m}_{\ddd}\le \dEsc(m)
\quad\text{for every $\xi$ in $[0,+\infty)$}
\quad\text{and}\quad
\phi(\cdot)\not\equiv m
\,.
\end{equation}
Then following assertions hold.
\begin{enumerate}
\item The ordered pair $\bigl(\phi(\xi),\phi'(\xi)\bigr)$ goes to $(m,0)$ (at an exponential rate) as $\xi$ goes to $+\infty$.
\label{item:cv_spatial_asymptotics_sw}
\item For all $\xi$ in $[0,+\infty)$, the scalar product $\bigl\langle\phi(\xi)-m,\phi'(\xi)\bigr\rangle_{\ddd}$ is negative. 
\label{item:transv_spatial_asymptotics_sw}
\item For all $\xi$ in $(0,+\infty)$, the distance $\abs{\phi(\xi)-m}_\ddd$ is smaller than $\dEsc(m)$. 
\label{item:closer_spatial_asymptotics_sw}
\item The supremum $\sup_{\xi\in\rr}\abs{\phi(\xi)-m}_\ddd$ is larger than $\dEsc(m)$. 
\label{item:escape_spatial_asymptotics_sw}
\end{enumerate}
\end{lemma}
\begin{proof}
See \cite[\GlobalBehaviourLemTWApproachCriticalPoints]{Risler_globalBehaviour_2016}.
\end{proof}
\subsection{Normalized Lagrangian integral of stationary solutions with almost zero normalized Hamiltonian}
\label{subsec:infin_lag}
\begin{notation}
Let $\valueOfV$ denote a real quantity, and let us assume that, in addition to hypothesis \cref{hyp_coerc}, the potential $V$ also satisfies hypothesis \textup{(\hyperlink{hypOnlyMin}{\hypOnlyMinRef{\valueOfV}})}. Let us consider (as in definitions \vref{def_V_ddag_asymptotic_energy,def_V_ddag_relaxation}) the ``normalized potential'' $V^\ddag$ and (as in definition \vref{def_normalized_hamiltonian}) the ``normalized Hamiltonian'' $H^\ddag$ and (as in definition \vref{def_normalized_lagrangian}) the ``normalized (with respect to the level $\valueOfV$) Lagrangian'' $L^\ddag$:
\[
V^\ddag(u) = V(u)-h
\quad\text{and}\quad
H^\ddag(u,v) = \frac{1}{2}\abs{v}_{\ddd}^2 - V^\ddag(u)
\quad\text{and}\quad
L^\ddag(u,v) = \frac{1}{2}\abs{v}_{\ddd}^2 + V^\ddag(u)
\,.
\]
\end{notation}
\begin{definition}[normalized Lagrangian integral of a stationary solution]
If $\xi\mapsto u(\xi)$ is a global solution of system~\cref{ham_ord_2_bis}, let us call \emph{normalized Lagrangian integral} of this solution the quantity
\begin{equation}
\label{def_normalized_lagrangian_integral}
\mathcal{L}^\ddag[\xi\mapsto u(\xi)] = \int_{\rr} L^\ddag\bigl(u(\xi),u'(\xi)\bigr) \, d\xi
\,,
\end{equation}
provided that this integral can be unambiguously defined, that is: provided that the integral is convergent, or that it diverges to $+\infty$ at both ends of $\rr$, or that it diverges to $-\infty$ at both ends of $\rr$. 
\end{definition}
The aim of this \namecref{subsec:infin_lag} is to prove the following proposition. Recall (see \vref{definition_PhiZero_of_h}) that $\PhiZero(\valueOfV)$ denotes the set of solutions $\xi\mapsto u(\xi)$ of system~\cref{ham_ord_2_bis} that are homoclinic or heteroclinic to points of $\mmm_{\valueOfV}$. 
\begin{proposition}[stationary solutions having an almost zero normalized Hamiltonian and a finite normalized Lagrangian integral are bistable]
\label{prop:infin_lag}
There exists a positive quantity $\deltaHam$ such that, for every global solution of system~\cref{ham_ord_2_bis}, if
\begin{itemize}
\item the normalized Hamiltonian $H^\ddag$ of this solution is between $-\deltaHam$ and $+\deltaHam$,
\item and this solution does not belong to the set $\PhiZero(\valueOfV)$, 
\end{itemize}
then the normalized Lagrangian integral \cref{def_normalized_lagrangian_integral} of this solution is equal to $+\infty$.
\end{proposition}
Hypothesis \textup{(\hyperlink{hypOnlyMin}{\hypOnlyMinRef{\valueOfV}})} (more precisely, inequality \vref{nonnegative_pot_around_loc_min} stating that the normalized potential $V^\ddag$ takes only nonnegative values around every critical point in the level set $V^{-1}(\{\valueOfV\})$) plays an essential role in the proof of this proposition (which is false if the converse holds). 
\begin{proof}
\renewcommand{\qedsymbol}{}
If $\xi\mapsto u(\xi)$ is a global solution of system~\cref{ham_ord_2_bis}, let
\[
\SigmaEsc[\xi\mapsto u(\xi)] = \SigmaEsc[u(\cdot)] = \{\xi\in\rr : 
\text{ for all } m \text{ in } \mmm_{\valueOfV},  \abs{u(\xi)-m}_{\ddd}>\dEsc(m) \}
\]
(observe the analogy with the notation $\SigmaEsc(t)$ in \cref{subsec:firewall}). 

It follows from inequality \vref{nonnegative_pot_around_loc_min} that, if $\xi\mapsto u(\xi)$ is a global solution of system~\cref{ham_ord_2_bis}, then
\begin{equation}
\label{lag_sig_close}
L^\ddag\bigl(u(\xi),u'(\xi)\bigr) \ge 0 
\quad\text{ for all } \xi \text{ in } \rr\setminus \SigmaEsc[u(\cdot)]\,.
\end{equation}
The proof will follow from the next two lemmas. 
\end{proof} % No square to signal the end of the proof here, since the proof is pursued later. 
\begin{lemma}[non bistable stationary solutions never stop to ``Escape'']
\label{lem:lag_sig_far_infin}
For every global solution $\xi\mapsto u(\xi)$ of system~\cref{ham_ord_2_bis} that is not in $\PhiZero(\valueOfV)$, the set $\SigmaEsc[u(\cdot)]$ is unbounded.
\end{lemma}
\begin{proof}[Proof of \cref{lem:lag_sig_far_infin}]
This lemma is an immediate consequence of \cref{lem:ham_equ_hyp} of the previous \cref{subsec:ham_equ_hyp}.
\end{proof}
\begin{lemma}[almost zero normalized Hamiltonian yields positive normalized Lagrangian at each ``Escape'']
\label{lem:lag_sig_far_pos}
There exist positive quantities $\deltaHam$ and $\deltaLag$ such that, for every global solution $\xi\mapsto u(\xi)$ of system~\cref{ham_ord_2_bis}, if the normalized Hamiltonian $H^\ddag$ of this solution is between $-\deltaHam$ and $+\deltaHam$, then, for every $\widebar{\xi}$ in $\rr$, the following holds:
\[
[\widebar{\xi},\widebar{\xi}+1]\cap \SigmaEsc[u(\cdot)]\not=\emptyset
\implies
\int_{\widebar{\xi}}^{\widebar{\xi}+1} L^\ddag\bigl(u(\xi),u'(\xi)\bigr) \, d\xi \ge \deltaLag
\,.
\]
\end{lemma}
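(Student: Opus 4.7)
I will argue by contradiction: suppose no such pair $(\deltaHam,\deltaLag)$ works. Then there exist sequences of solutions $u_p$ of~\cref{stat_app} defined on all of $\rr$, of points $x_{0,p}\in\rr$ and $x_{*,p}\in[x_{0,p},x_{0,p}+1]\cap\Sigma_{\textup{\textrm{Esc}}}[u_p(\cdot)]$, such that the (constant) Hamiltonian $H_p$ of $u_p$ and the localized Lagrangian $\int_{x_{0,p}}^{x_{0,p}+1} L(u_p,u_p')\,dx$ both tend to $0$ as $p\to+\infty$. After translating, I may assume $x_{0,p}=0$, and I set $c_p:=u_p(x_{*,p})$.

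The key ingredients are the two elementary identities $L(u,v)+H(u,v)=\abs{v}_\ddd^2$ and $L(u,v)-H(u,v)=2V(u)$, combined with the constancy of $H$ along trajectories. Over the interval $[0,1]$ they yield
\[
\int_0^1\abs{u_p'(x)}_\ddd^2\,dx = \int_0^1 L\bigl(u_p,u_p'\bigr)\,dx + H_p \xrightarrow[p\to\infty]{} 0
\quad\text{and}\quad
\int_0^1 V\bigl(u_p(x)\bigr)\,dx \xrightarrow[p\to\infty]{} 0.
\]
From the first limit and Cauchy--Schwarz, $\sup_{x\in[0,1]}\abs{u_p(x)-c_p}\to 0$. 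From the pointwise identity $V(u_p)+H_p=\tfrac{1}{2}\abs{u_p'}_\ddd^2\ge 0$ and the second limit, $V(u_p)+H_p\to 0$ in $L^1([0,1])$.

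I then split into two cases according to the behaviour of $(c_p)$. If $(c_p)$ is unbounded, pass to a subsequence with $\abs{c_p}\to+\infty$: the coercivity hypothesis~\hypCoerc yields a lower bound of the form $V(u)\ge\tfrac{\varepsilon}{2}\abs{u}^2-C$, and together with $\sup_{[0,1]}\abs{u_p-c_p}\to 0$ this forces $V(u_p)\to+\infty$ uniformly on $[0,1]$, contradicting $\int_0^1 V(u_p)\to 0$. If $(c_p)$ is bounded, extract $c_p\to c_\infty$; uniform convergence of $u_p$ to $c_\infty$ on $[0,1]$ gives $V(c_\infty)=0$ (via $\int V(u_p)\to 0$), and uniform convergence $\abs{u_p'}_\ddd^2=2(V(u_p)+H_p)\to 0$ on $[0,1]$, together with integration of the Hamiltonian equation $\ddd u_p''=\nabla V(u_p)$ from $0$ to $1$, yields $\nabla V(c_\infty)=0$. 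Hypothesis~\hypOnlyMin then puts $c_\infty$ in $\mmm_0$, and choosing $m=c_\infty$ in the escape condition $\abs{u_p(x_{*,p})-m}_\ddd>\dEsc$ gives in the limit $0\ge\dEsc>0$, a contradiction.

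The main obstacle I expect is the unbounded branch of this dichotomy: one must be certain that a large value $\abs{c_p}$ at the single point $x_{*,p}$ propagates to the entire unit interval $[0,1]$ so that coercivity can be leveraged. This propagation is secured by the $L^2$-control $\int_0^1\abs{u_p'}_\ddd^2\to 0$ and Cauchy--Schwarz, and it relies crucially on working on an interval of \emph{bounded} length; an analogous statement on an interval whose length was allowed to shrink would simply fail.
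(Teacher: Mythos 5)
Your proof is correct and follows the same contradiction-by-compactness strategy as the paper's own argument: assume failure, translate so the bad intervals all sit at $[0,1]$, deduce from the identities $L+H=\abs{v}_\ddd^2$ and $L-H=2V$ that $\int_0^1\abs{u_p'}_\ddd^2\,dx\to 0$ and $\int_0^1 V(u_p)\,dx\to 0$, extract a convergent subsequence, and derive a contradiction with the definitions of $\mmm_0$ and $\dEsc$ under hypothesis \hypOnlyMin. If anything, your write-up is slightly more careful than the paper's: the paper asserts that the uniform limit is an equilibrium without spelling out why $\nabla V(c_\infty)=0$, whereas you prove it by integrating $\ddd u_p''=\nabla V(u_p)$ over $[0,1]$ and using the uniform decay of $u_p'$ derived from the Hamiltonian identity, and you also make the bounded/unbounded dichotomy on $(c_p)$ explicit whereas the paper folds it into a parenthetical remark about coercivity.
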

\begin{proof}[Proof of \cref{lem:lag_sig_far_pos}]
Let us proceed by contradiction and assume that, for every positive integer $n$, there exists a global solution $\xi\mapsto u_n(\xi)$ of system~\cref{ham_ord_2_bis} such that the Hamiltonian $H^\ddag$ of this solution is between $-1/p$ and $+1/p$, and such that there exists $\xi_n$ in $\rr$ such that
\[
[\xi_n,\xi_{n+1}]\cap \SigmaEsc[u_n(\cdot)]\not=\emptyset
\quad\text{and}\quad
\int_{\xi_n}^{\xi_{n+1}} L^\ddag\bigl(u_n(\xi),u_n'(\xi)\bigr) \, d\xi \le \frac{1}{n}
\,.
\]
A compactness argument will lead to the sought contradiction. 

For notational convenience, let us assume without loss of generality (up to replacing $\xi\mapsto u_n(\xi)$ by $\xi\mapsto u_n(\xi-\xi_n)$) that $\xi_n$ equals $0$. Then the last estimate reads:
\[
\int_0^1 \Bigl(\frac{1}{2}\abs{u_n'(\xi)}_\ddd^2 + V^\ddag\bigl( u_n(\xi)\bigr) \Bigr) \, d\xi \le \frac{1}{n}
\,,
\]
and since the Hamiltonian of the solution is between $-1/p$ and $+1/p$ the following estimate holds:
\[
\int_0^1 \Bigl(\frac{1}{2}\abs{u_n'(\xi)}_\ddd^2 - V^\ddag\bigl( u_n(\xi)\bigr) \Bigr) \, d\xi 
= \int_0^1 H^\ddag\bigl( u_n(\xi),u_n'(\xi) \bigr) \, d\xi 
\le \frac{1}{n}
\,.
\]
Summing up these two inequalities yields
\begin{equation}
\label{posit_Lag_escape_bound_uprime}
\int_0^1 \abs{u_n'(\xi)}_{\ddd}^2\, d\xi \le \frac{2}{n}
\,,
\end{equation}
and dropping the square term of the integrands in the same two inequalities yields
\begin{equation}
\label{posit_Lag_escape_bound_potential}
-\frac{1}{p} \le \int_0^1 V^\ddag\bigl(u_n(\xi)\bigr)\, d\xi \le \frac{1}{n}
\,.
\end{equation}
According to inequality \cref{posit_Lag_escape_bound_uprime} $u_n(\cdot)$ varies by less than $\sqrt{2/n}$ on $[0,1]$, and according to the inequalities \cref{posit_Lag_escape_bound_potential} $u_n(0)$ is bounded independently of $n$ (indeed according to the coercivity hypothesis \cref{hyp_coerc}, the quantity $V(v)$ goes to $+\infty$ as $\abs{v}$ goes to $+\infty$). 

Thus, up to extracting a subsequence, it may be assumed that the sequence of functions $\xi\mapsto u_n(\xi)$ converges, uniformly on $[0,1]$, towards an equilibrium $u_{\infty}$ of system \cref{ham_ord_2_bis} satisfying:
\[
V^\ddag(u_{\infty})=0
\quad\text{and}\quad
\abs{u_{\infty}-m}_{\ddd}\ge\dEsc(m)
\text{ for all } m \text{ in } \mmm_{\valueOfV}
\,,
\]
a contradiction with the definition of $\mmm_{\valueOfV}$ and hypothesis \textup{(\hyperlink{hypOnlyMin}{\hypOnlyMinRef{\valueOfV}})}.
\end{proof}
\begin{proof}[Proof of \cref{prop:infin_lag}]
Let $\xi\mapsto u(\xi)$ be a global solution of system~\cref{ham_ord_2_bis} such that:
\begin{enumerate}
\item the normalized Hamiltonian $H^\ddag$ of this solution is between $-\deltaHam$ and $+\deltaHam$,
\item and this solution is \emph{not} in $\PhiZero(\valueOfV)$.
\end{enumerate}
Then, for every positive quantity $\widebar{\xi}$ (say larger than $1$),
\[
\int_{0}^{\widebar{\xi}} L^\ddag\bigl(u(\xi),u'(\xi)\bigr) \, d\xi = 
\sum_{i=0}^{\text{int}(\widebar{\xi})-1} \int_{i}^{i+1} L^\ddag\bigl(u(\xi),u'(\xi)\bigr) \, d\xi
+ 
\int_{\text{frac}(\widebar{\xi})}^{\widebar{\xi}} L^\ddag\bigl(u(\xi),u'(\xi)\bigr) \, d\xi
\,, 
\]
and the $i$-th term under the sum of the right-hand side of this equality is:
\begin{itemize}
\item nonnegative if the intersection
\[
\bigl[i,i+1\bigr]\cap \SigmaEsc[u(\cdot)]
\]
is empty (according to assertion \cref{lag_sig_close}), 
\item greater than or equal to $\deltaLag$ if this intersection is nonempty (in view of \cref{lem:lag_sig_far_pos} about the non-negativity of $L^\ddag\bigl(u(\cdot),u'(\cdot)\bigr)$), 
\end{itemize}
and according to \cref{lem:lag_sig_far_infin} the second of these two alternatives occurs for an unbounded number of values of $i$ as $\widebar{\xi}$ goes to $+\infty$. In addition, the remaining term of the right-hand side of this inequality is bounded from below (as is $V(\cdot)$ and thus as is $V^\ddag(\cdot)$). 
As a consequence (applying the symmetric argument at the left of $0$), both quantities  
\[
\int_{0}^{\widebar{\xi}} L^\ddag\bigl(u(\xi),u'(\xi)\bigr) \, d\xi
\quad\text{and}\quad
\int_{-\widebar{\xi}}^0 L^\ddag\bigl(u(\xi),u'(\xi)\bigr) \, d\xi
\]
go to $+\infty$ as $\widebar{\xi}$ goes to $+\infty$. \Cref{prop:infin_lag} is proved. 
\end{proof}
\section{The space of asymptotic patterns}
\label{sec:sp_as_patt}
The aim of this \namecref{sec:sp_as_patt} is to make a few (rather abstract) remarks concerning the regularity (more precisely, the upper semi-continuity) of the correspondence between an initial condition and the distribution of energy in the standing terrace provided by conclusion \cref{item:thm_main_approach_standing_terrace_and_value_asymptotic_energy} of \cref{thm:main} when the asymptotic energy of the corresponding solution is not equal to $-\infty$. 

Let us assume that the potential $V$ satisfies hypothesis \cref{hyp_coerc}. Let $\valueOfV$ be a real quantity, and let us assume that hypotheses \textup{(\hyperlink{hypOnlyMin}{\hypOnlyMinRef{\valueOfV}})} and \textup{(\hyperlink{hypDiscStat}{\hypDiscStatRef{\valueOfV}})} hold. For every ordered pair $(m_-,m_+)$ of points of $\mmm_{\valueOfV}$, let us introduce the space
\[
\XbistNoInv(m_-,m_+) = \Xbist(m_-,m_+) \cap \eeeAsympt^{-1}\bigl( [0,+\infty) \bigr) 
\,.
\]
In this notation, the additional subscript ``no-inv'' refers to the fact, that, for those initial conditions, the stable equilibria at both ends of space are not ``invaded'' by travelling fronts. Indeed, \cite[\GlobalBehaviourPropNonInvasionImpliesRelaxation]{Risler_globalBehaviour_2016} states (under the additional hypothesis that the diffusion matrix $\ddd$ is the identity matrix) that solutions in $\Xbist(m_-,m_+)$ having an asymptotic energy equal to $-\infty$ are exactly those for which the equilibria at both ends of space are invaded by bistable travelling fronts. 

For every $u_0$ in $\XbistNoInv(m_-,m_+)$, let us denote by $\qAsympt[\phi_0]$ the ``number of items in the standing terrace'' approached by the corresponding solution. This defines a map
\begin{equation}
\label{def_map_numb_con}
\qAsympt:\XbistNoInv(m_-,m_+)\to\nn
\,.
\end{equation}
As an example of use of this notation, observe that, for every point $m$ in $\mmm_{\valueOfV}$, 
\[
\basatt(m) = \XbistNoInv(m,m) \cap \qAsympt^{-1} \bigl(\{0\}\bigr)
\,.
\]
The following proposition is a consequence of \vref{cor:neg_dyn}.
\begin{proposition}[the number of items in the standing terrace is not lower semi-continuous with respect to the initial condition]
Assume that hypotheses \cref{hyp_coerc} and \textup{(\hyperlink{hypOnlyMin}{\hypOnlyMinRef{\valueOfV}})} and \textup{(\hyperlink{hypDiscStat}{\hypDiscStatRef{\valueOfV}})} hold, and assume in addition that the global minimum value of $V$ is less than $\valueOfV$. Then the number of items in the asymptotic standing terrace approached by the solution is \emph{not} lower semi-continuous with respect to the initial condition. In more formal terms, the map $\qAsympt[\cdot]$ defined in~\cref{def_map_numb_con} is \emph{not} lower semi-continuous.
\end{proposition}
\begin{proof}
According to \cref{cor:neg_dyn}, for every $m$ in $\mmm_{\valueOfV}$, the set $\partial \basatt(m)$ is nonempty, and for every initial condition $u_0$, in this set, the integer $\qAsympt[u_0]$ is nonzero. On the other hand, by definition of the topological border, $u_0$ is arbitrarily close to initial conditions in $\basatt(m)$, and for those initial condition the integer $\qAsympt[\cdot]$ is zero.
\end{proof}
It is likely that this map $\qAsympt[\cdot]$ is \emph{not} upper semi-continuous in general (thus neither lower nor upper semi-continuous, in general). It would be interesting however to build an explicit example of a potential $V$ for which $\qAsympt[\cdot]$ is not upper semi-continuous (say, for which an unstable pulse may split into two repulsive ``smaller'' pulses). The conclusion that can be drawn from this observation is that the definition~\cref{def_map_numb_con} of the map $\qAsympt[\cdot]$ is ``irrelevant'' (let us say: ``bad''), in the sense that it does not ensure upper semi-continuity. By contrast, any ``good'' definition of an asymptotic feature of a solution should display some form of upper semi-continuity. In this sense, the asymptotic energy defined in \cref{subsubsec:asympt_en} is a ``good'' feature. 

Unfortunately, the following definitions will turn to be naively ``bad''. Thus the sole interest of the next lines is to raise the question of what would be the ``good'' definitions to choose in place of these ``bad'' ones. 

Let us introduce the following spaces (``bad'' space of asymptotic profiles and ``bad'' space of asymptotic energy distributions):
\[
\pppBad = \rr^d \cup \bigsqcup_{q\in\nn^*} \bigl( \ccc^3(\rr,\rr^d) \cap H^1(\rr,\rr^d) \bigr)^q
\quad\text{and}\quad
\eeeBad = \{0\} \cup \bigsqcup_{q\in\nn^*}\rr_+^q
\,.
\]
Conclusion \cref{item:thm_main_approach_standing_terrace_and_value_asymptotic_energy} of \cref{thm:main} leads us to define the following map, which sends an initial condition to the profiles of the standing terrace approached by the solution (let us denote by $\phi_1,\dots, \phi_{\qAsympt[u_0]}$ these profiles if $\qAsympt[u_0]$ is positive):
\[
\ppp_\infty: \XbistNoInv(m_-,m_+) \to \pppBad \,,
\quad
u_0 \mapsto
\left\{
\begin{aligned}
m_+
&\quad\text{if}\quad \qAsympt[u_0] = 0 \,, \\
(\phi_1,\dots,\phi_{\qAsympt[u_0]})
&\quad\text{if}\quad \qAsympt[u_0] > 0 \,,
\end{aligned}
\right.
\]
and the following map, that sends an ``asymptotic pattern'' to the corresponding ``distribution of asymptotic energies'':
\[
\eee: \pppBad \to \eeeBad \,,
\quad
m_+ \mapsto 0 \,,
\quad
(\phi_1,\dots,\phi_q) \mapsto \bigl(\eee[\phi_1], \dots, \eee[\phi_q]\bigr)
\,,
\]
and the following map, that does nothing more than summing up the components of a ``distribution of asymptotic energies'':
\[
\Sigma : \eeeBad \to [0,+\infty) \,,
\quad
0 \mapsto 0 \,,
\quad
(E_1,\dots,E_q) \mapsto \sum_{i=1}^q E_i
\,,
\]
and the following map, that simply counts the number of items in the asymptotic pattern:
\[
\text{card} : \eeeBad \to \nn \,,
\quad
0 \mapsto 0 \,
\quad
(E_1,\dots,E_q) \mapsto q
\,.
\]
As already mentioned, it is likely that the map
\[
\qAsympt = \text{card} \circ \eee\circ\ppp_\infty
\]
is not upper semi-continuous, whereas by contrast \cref{prop:scs_asympt_en} states that the map
\[
\eeeAsympt = \Sigma \circ \eee\circ\ppp_\infty
\]
is upper semi-continuous. 

Unfortunately, there is no hope that, with the definitions above, the map $\eee\circ\ppp_\infty$ may display any kind of upper semi-continuity. The sole goodness of the spaces $\pppBad$ and $\eeeBad$ is that they bear a partial order that is relevant (only in space dimension one) with respect to the phenomenon under consideration, but this is far from being sufficient to ensure the desired upper semi-continuity. The problem of finding proper definitions for these two spaces so that the map $\eee\circ\ppp_\infty$ (together with the map ``counting the number of items in the standing terrace'') be upper semi-continuous is beyond the scope of this paper.

The results of \cite{Risler_globalBehaviour_2016} (global behaviour of all bistable solutions under generic assumptions on the potential) raise the same kind of questions about the topological structure of the asymptotic pattern of every bistable solutions (and not only those of the set $\XbistNoInv(m_-,m_+)$), including the travelling fronts involved in this asymptotic pattern and their speeds. 
\subsubsection*{Acknowledgements}
I am indebted to Thierry Gallay and Romain Joly for their help and interest through numerous fruitful discussions. 
\emergencystretch=1em
\printbibliography % Biblio
\bigskip
\mySignature
\end{document}